\documentclass[11pt,reqno,oneside]{amsart}

\usepackage[utf8]{inputenc}
\usepackage[T1]{fontenc}
\usepackage[top=3cm,bottom=3cm,left=3cm,right=3cm]{geometry}

\usepackage{hyperref}
\usepackage{mathtools}
\usepackage{thmtools}
\usepackage[capitalize, nameinlink]{cleveref}

\usepackage{xcolor}
\definecolor{red}{rgb}{1,0,0}
\hypersetup{
    colorlinks,
    linkcolor={red!50!black},
    citecolor={blue!50!black},
    urlcolor={blue!80!black}
}

\usepackage{amsfonts}
\usepackage{amssymb}
\usepackage{latexsym}
\usepackage{mathrsfs}
\usepackage{textcomp}
\usepackage{longtable}
\usepackage{booktabs}
\usepackage{graphicx}
\usepackage{setspace}
\usepackage{nicefrac}
\usepackage{enumerate}
\usepackage{wasysym}
\usepackage[normalem]{ulem}
\usepackage{upgreek}
\usepackage{paralist}
\usepackage{etoolbox}
\usepackage{mathdots}
\usepackage{wrapfig}
\usepackage{floatflt}
\usepackage{tensor} 
\usepackage{lscape}
\usepackage{stmaryrd}

\usepackage{tikz}
\usetikzlibrary{arrows}
\usetikzlibrary{matrix}
\usetikzlibrary{positioning}
\usetikzlibrary{calc}

\usepackage[all,cmtip]{xy} 
\usepackage[labelfont=bf, font={small}]{caption}[2005/07/16]

\newcommand{\vvirg}{ , \dots , }
\newcommand{\ootimes}{ \otimes \cdots \otimes }

\newcommand{\ttimes}{ \times \cdots \times }

\newcommand{\textfrac}[2]{{\textstyle \frac{#1}{#2}}}

\newcommand{\bfA}{\mathbf{A}}

\newcommand{\bfD}{\mathbf{D}}

\newcommand{\bfF}{\mathbf{F}}

\newcommand{\bfT}{\mathbf{T}}

\newcommand{\bfX}{\mathbf{X}}

\newcommand{\bfj}{\mathbf{j}}

\newcommand{\bfn}{\mathbf{n}}

\newcommand{\bfp}{\mathbf{p}}

\newcommand{\bfu}{\mathbf{u}}

\newcommand{\calA}{\mathcal{A}}
\newcommand{\calB}{\mathcal{B}}
\newcommand{\calC}{\mathcal{C}}
\newcommand{\calD}{\mathcal{D}}

\newcommand{\calF}{\mathcal{F}}
\newcommand{\calG}{\mathcal{G}}

\newcommand{\calI}{\mathcal{I}}

\newcommand{\calL}{\mathcal{L}}
\newcommand{\calM}{\mathcal{M}}
\newcommand{\calN}{\mathcal{N}}

\newcommand{\calQ}{\mathcal{Q}}
\newcommand{\calR}{\mathcal{R}}
\newcommand{\calS}{\mathcal{S}}
\newcommand{\calT}{\mathcal{T}}
\newcommand{\calU}{\mathcal{U}}

\newcommand{\calY}{\mathcal{Y}}
\newcommand{\calZ}{\mathcal{Z}}

\newcommand{\bbC}{\mathbb{C}}

\newcommand{\bbN}{\mathbb{N}}

\newcommand{\bbP}{\mathbb{P}}

\newcommand{\bbZ}{\mathbb{Z}}

\newcommand{\frakb}{\mathfrak{b}}

\newcommand{\frakg}{\mathfrak{g}}

\newcommand{\frakm}{\mathfrak{m}}

\newcommand{\fraks}{\mathfrak{s}}

\newcommand{\frakz}{\mathfrak{z}}

\newcommand{\rmQ}{\mathrm{Q}}
\newcommand{\rmR}{\mathrm{R}}

\newcommand{\rmd}{\mathrm{d}}

\renewcommand{\phi}{\varphi}
\newcommand{\eps}{\varepsilon}
\renewcommand{\theta}{\vartheta}

\renewcommand{\tilde}[1]{\widetilde{#1}}

\renewcommand{\bar}[1]{\overline{#1}}

\newcommand{\id}{\mathrm{id}}

\newcommand{\rank}{\mathrm{rank}}
  
\newcommand{\image}{\mathrm{image}}  
\DeclareMathOperator{\codim}{codim}

\DeclareMathOperator{\Hom}{Hom}

\DeclareMathOperator{\End}{End}

\newcommand{\SL}{\mathrm{SL}}

\newcommand{\Gr}{\mathrm{Gr}}

\newcommand{\GL}{\mathrm{GL}}

\newcommand{\supp}{\mathrm{supp}}

\newcommand{\uR}{\underline{\rmR}}
\newcommand{\uQ}{\underline{\rmQ}}

\newcommand{\GR}{\mathrm{GR}}

\newcommand{\MaMu}{\mathsf{MaMu}}

\newcommand{\fraksl}{\mathfrak{sl}}

\newcommand{\Mat}{\mathrm{Mat}}

\DeclareMathOperator{\Stab}{Stab}

\newcommand{\fillwidthof}[3][c]
	{%
		\parbox
		{%
			\widthof{#2}%
		}%
		{%
			\ifx#1c%
				\centering#3%
			\else\ifx#1l%
				#3\hfill%
			\else\ifx#1r%
				\hfill#3%
			\fi\fi\fi%
		}%
	}%

\def\mylettrine#1#2 {\lettrine{#1}{#2}\space}

\newcommand{\partinto}[1][]{\smash{\mathord{\mathchoice{%
  \xymatrix@=0.4em@1{%
  \ar@{|-}[rr]_-*--{\scriptstyle #1}
  &*{\phantom{\scriptstyle{#1}}}&}
}{
  \xymatrix@=0.25em@1{%
  \ar@{|-}[rr]_-*--{\scriptstyle #1}
  &*{\phantom{\scriptstyle{#1}}}&}
}{
  \xymatrix@=0.2em@1{%
  \ar@{|-}[rr]_-*--{\scriptscriptstyle #1}
  &*{\phantom{\scriptscriptstyle{#1}}}&}
}{}}}}
\newcommand{\partintonosmash}[1][]{\mathord{\mathchoice{%
  \xymatrix@=0.4em@1{%
  \ar@{|-}[rr]_-*--{\scriptstyle #1}
  &*{\phantom{\scriptstyle{#1}}}&}
}{
  \xymatrix@=0.25em@1{%
  \ar@{|-}[rr]_-*--{\scriptstyle #1}
  &*{\phantom{\scriptstyle{#1}}}&}
}{
  \xymatrix@=0.2em@1{%
  \ar@{|-}[rr]_-*--{\scriptscriptstyle #1}
  &*{\phantom{\scriptscriptstyle{#1}}}&}
}{}}}
\newcommand{\partintostar}[1][]{\smash{\mathord{\mathchoice{%
  \xymatrix@=0.4em@1{%
  \ar@{|-}[rr]_-*--{\scriptstyle #1}^-*--{\scriptstyle \ast}
  &*{\phantom{\scriptstyle{#1}}}&}
}{
  \xymatrix@=0.25em@1{%
  \ar@{|-}[rr]_-*--{\scriptstyle #1}^-*--{\scriptstyle \ast}
  &*{\phantom{\scriptstyle{#1}}}&}
}{
  \xymatrix@=0.2em@1{%
  \ar@{|-}[rr]_-*--{\scriptscriptstyle #1}^-*--{\scriptstyle \ast}
  &*{\phantom{\scriptscriptstyle{#1}}}&}
}{}}}}
\newcommand{\partintostarnosmash}[1][]{\mathord{\mathchoice{%
  \xymatrix@=0.4em@1{%
  \ar@{|-}[rr]_-*--{\scriptstyle #1}^-*--{\scriptstyle \ast}
  &*{\phantom{\scriptstyle{#1}}}&}
}{
  \xymatrix@=0.25em@1{%
  \ar@{|-}[rr]_-*--{\scriptstyle #1}^-*--{\scriptstyle \ast}
  &*{\phantom{\scriptstyle{#1}}}&}
}{
  \xymatrix@=0.2em@1{%
  \ar@{|-}[rr]_-*--{\scriptscriptstyle #1}^-*--{\scriptstyle \ast}
  &*{\phantom{\scriptscriptstyle{#1}}}&}
}{}}}

\newtheorem{theorem}{Theorem}[section]

\newtheorem{lemma}[theorem]{Lemma}
\newtheorem{proposition}[theorem]{Proposition}
\newtheorem{corollary}[theorem]{Corollary}
\theoremstyle{definition}

\newtheorem{remark}[theorem]{Remark}
\newtheorem{example}[theorem]{Example}

\crefname{theorem}{Theorem}{Theorems}
\crefname{lemma}{Lemma}{Lemmas}
\crefname{proposition}{Proposition}{Propositions}
\crefname{corollary}{Corollary}{Corollaries}
\crefname{definition}{Definition}{Definitions}
\crefname{remark}{Remark}{Remarks}
\crefname{example}{Example}{Examples}

\numberwithin{equation}{section}

\newcommand{\Flag}{\calF \mathit{lag}}

\title{Border subrank of higher order tensors and algebras}

\author{Chia-Yu Chang}
\author{Fulvio Gesmundo}
\address[C.-Y. Chang, F. Gesmundo]{Institut de Mathématiques de Toulouse; UMR5219 -- Université de Toulouse; CNRS -- UPS, F-31062 Toulouse Cedex 9, France}
\email{chia-yu.chang@math.univ-toulouse.fr}
\email{fgesmund@math.univ-toulouse.fr}

\author{Jeroen Zuiddam}
\address[J. Zuiddam]{University of Amsterdam}
\email{j.zuiddam@uva.nl}

\subjclass[2020]{15A69, 68Q17, 13A50, 16S80}
\keywords{subrank, border subrank, structure tensor, geometric rank, associative algebra}

\begin{document}
\begin{abstract}
We determine the border subrank of higher order structure tensors of several families of algebras, and in particular obtain the following results. (1) We determine tight bounds on the border subrank of $k$-fold matrix multiplication and $k$-fold upper triangular matrix multiplication for all $k$. (2) We determine the border subrank of the higher order structure tensors of truncated polynomial algebras, null algebras, and apolar algebras of a quadric. (3) We determine the border subrank of the higher order structure tensors of the Lie algebra $\mathfrak{sl}_2$ for all orders. (4) We prove that degeneration of structure tensors of algebras propagates from higher to lower order. Along the way, we investigate which upper bound methods (geometric rank, $G$-stable rank, socle degree) are effective in which settings, and how they relate. Our work extends the results of Strassen (J.~Reine Angew.~Math., 1987, 1991), who determined the asymptotic subrank of these algebras for tensors of order three, in two directions: we determine the border subrank itself rather than its asymptotic version, and we consider higher order structure tensors.
\end{abstract}

\maketitle

\setcounter{tocdepth}{1}
\tableofcontents

\section{Introduction}\label{sec: intro}

\emph{Subrank} and \emph{border subrank} are fundamental invariants of tensors, introduced by Strassen \cite{Strassen87} in the context of matrix multiplication complexity. They measure how well a given tensor can be transformed into a diagonal tensor via multilinear maps, and are central to Strassen's theory of asymptotic spectra, with applications across theoretical computer science, combinatorics, and quantum information theory; see \cite{MR1440179,blaser2013fast,MR3729273,WigZui:AsymptoticSpectra} for background and surveys.

\subsection*{Subrank and related invariants}
Given a tensor $T \in \bbC^n \ootimes \bbC^n$ of order $k$, the subrank $\rmQ(T)$ is the largest $q\in\bbN$ for which there exist linear maps $A_i : \bbC^n \to \bbC^q$ such that 
\[(A_1 \ootimes A_k)(T) = \sum_{i=1}^q e_i^{\otimes k}.
\]
This generalizes the matrix rank, which can be characterized as the largest $q$ such that a matrix can be transformed into a $q \times q$ identity matrix via row and column operations. The \emph{border subrank} $\uQ(T)$ is the largest $q\in\bbN$ for which the unit tensor $\sum_{i=1}^q e_i^{\otimes k}$ lies in the orbit closure of $T$ under the action of $\End(\bbC^n)^{\times k}$ on $\bbC^n \ootimes \bbC^n$ (\cref{subsec: subrank}). The \emph{asymptotic subrank} $\lim_{N \to \infty} \rmQ(T^{\otimes N})^{\smash{1/N}}$ measures the rate at which the subrank of large tensor powers grows. Symmetric versions of these notions are studied in \cite{ChrFawTaZui,BDNS26}. Related invariants include the geometric rank, the Schmidt rank \cite{KopMosZui:GeomRankSubrankMaMu,chen2025boundsgeometricrankterms,KazhdanLampertPolishchuk:SchmidtRank,lindberg2023symmetricgeometricranksymmetric}, and the $G$-stable rank \cite{Derk:GStableRank}, all of which provide upper bounds on the border subrank. Other notions of rank for higher order tensors include the slice rank \cite{SawTao:NoteSliceRank}, the partition rank \cite{MR4078997,lampert2025slicerankpartitionrank}, and the analytic rank \cite{MR3964143}. 

\subsection*{Higher order structure tensors of algebras}
The \emph{structure tensor} of a finite-dimensional algebra $\calA$ encodes its multiplication map; more generally, the $k$-fold product in $\calA$ defines a multilinear map $\calA^{\times k} \to \calA$, whose associated tensor $\smash{T^{(k)}_\calA}$ of order $k+1$ we call the \emph{$k$-th structure tensor} of $\calA$. In two influential papers, Strassen \cite{Strassen87,Strassen91} determined the \emph{asymptotic} subrank of the structure tensors, in the case $k=2$, of several families of algebras: matrix multiplication, upper triangular matrix multiplication, truncated polynomial algebras $\bbC[x]/(x^d)$, and null algebras $\bbC[x_1 \vvirg x_n]/(x_1 \vvirg x_n)^2$. These results, and in particular the asymptotic subrank of the matrix multiplication tensor, are closely related to the exponent $\omega$ of matrix multiplication and to Strassen's laser method, and to barriers for it \cite{Alman:LimitsUniversalMethod,ChrVraZui:BarriersMaMu}. Strassen's lower bound on the border subrank of matrix multiplication was later sharpened to a precise value by Kopparty--Moshkovitz--Zuiddam \cite{KopMosZui:GeomRankSubrankMaMu}, using \emph{geometric rank}, and this line of work has since been developed further in several directions \cite{chen2025boundsgeometricrankterms,lindberg2023symmetricgeometricranksymmetric,bik2024uniformitylimitstensors}.

Recently, Bl\"aser--Mayer--Shringi \cite{BlaMayShr23} extended the \emph{rank} side of this study to higher order tensors. They gave a systematic treatment of the rank of $\smash{T^{(k)}_\calA}$ as $k$ grows, generalizing the Alder--Strassen bound \cite{AldStr}, and proving a dichotomy result: for a fixed algebra $\calA$, the rank of $\smash{T^{(k)}_\calA}$ grows polynomially as a function of $k$ if the \emph{reduced} algebra $\calA/\mathrm{rad}(\calA)$ is commutative, and grows exponentially otherwise.

In this work, we take up the analogous question on the subrank side: we determine the border subrank of the higher order structure tensors $\smash{T^{(k)}_\calA}$ for several families of algebras, including those studied by Strassen. Unlike rank, the subrank of $\smash{T^{(k)}_\calA}$ is non-increasing as a function of $k$. Hence, it has a stable value, which we also determine for several of these families; in some cases it collapses to $1$. Along the way, we investigate which upper bound methods (geometric rank, $G$-stable rank, socle degree) are effective in which settings, and how they relate, and we prove a general propagation result for degenerations of higher order structure tensors. Our main contributions are listed in \cref{subsec: contributions}.

\subsection*{Further motivation}
The focus on structure tensors of algebras is motivated by several connections.

\emph{Algebra and complexity.} Invariants of the structure tensors $\smash{T^{(k)}_\calA}$ are also invariants of the algebra $\calA$ itself: it was observed in \cite{Poon08,BlaLys} that at order three, degeneration of structure tensors is equivalent to degeneration of algebras in the sense of deformation theory, so algebraic features of $\calA$ serve as proxies for tensor invariants and vice versa. A classical instance of this correspondence is the Alder--Strassen bound \cite{AldStr}, relating the rank of a structure tensor to the number of maximal ideals of the algebra; the bound of \cite{BlaMayShr23} mentioned above is a higher order generalization. Further applications of higher order multilinear complexity include polynomial identity testing \cite{Bsh14} and circuit complexity of higher order tensors \cite{DBLP:journals/corr/abs-2602-11975}.

\emph{Quantum information.} Tensors correspond to pure quantum states, and subrank and border subrank are entanglement measures under stochastic local operations and classical communication (SLOCC) \cite{VrChr}. Iterated matrix multiplication tensors are graph tensors, studied in the context of tensor networks: they model matrix product states \cite{ChrVraZui:AsyRankGraph,BerDLaGes}, and their border subrank has applications in quantum communication complexity \cite{DBLP:conf/innovations/BuhrmanCZ17}.

\emph{Combinatorics.} The structure tensor of $\Bbbk[t]/(t^3)$ encodes the capset problem \cite{MR3583357,MR3583358,Tao:SymmetricFormulationCapset,SawTao:NoteSliceRank}, and higher order analogs $\Bbbk[t]/(t^k)$ arise in the sunflower problem \cite{MR3668469} and multicolored sum-free problems \cite{MR3957831}. Further related work on subrank and border subrank includes the generic case \cite{derksen2022subrankoptimalreductionscalar,pielasa2024exactvaluesgenericsubrank,BCDR25} and the real case \cite{biaggi2025realsubrankorderthreetensors}.

\subsection{Our contributions}\label{subsec: contributions}
We now describe our main contributions in more detail.
\begin{enumerate}
\item \textbf{Matrix multiplication.} We determine tight bounds on the border subrank of $k$-fold (iterated) $n \times n$ matrix multiplication: the border subrank is between $\smash{\frac{1}{k-1}} n^2$ and $\smash{\frac{k+1}{2k}} n^2 + O(k)$ (\cref{thm: main MaMu} and \cref{corol: GR MaMu}). Previously, only the case $k=2$ was known \cite{Strassen87,KopMosZui:GeomRankSubrankMaMu}.

\item \textbf{Upper triangular matrix multiplication.} We determine the border subrank of $k$-fold (iterated) upper triangular matrix multiplication: it is $\frac{1}{2k} n^2 + \Theta(n)$ (\cref{thm: subrank triangular}). Previously, only the asymptotic subrank for the case $k=2$ was known \cite{Strassen87}. In both~(1) and (2), the upper bounds are obtained by determining the geometric rank exactly, and the lower bounds by constructing explicit degenerations.

\item \textbf{Commutative algebras.} We determine the border subrank of the higher order structure tensors of the families of commutative algebras studied by Strassen \cite{Strassen91}, namely truncated polynomial algebras $\bbC[x]/(x^d)$ and null algebras $\bbC[x_1 \vvirg x_n]/(x_1 \vvirg x_n)^2$, as well as apolar algebras of a quadric $\bbC[x_1 \vvirg x_n]/(x_ix_j : i \neq j, \; x_i^2 - x_1^2 : i \neq 1)$, which generalizes the big Coppersmith--Winograd tensors \cite{CopWin} (\cref{prop: null algebra}, \cref{prop: lower bound QTRd}, \cref{thm: subrank higher CW}). The upper bounds use geometric rank, $G$-stable rank, and an algebraic invariant called the socle degree; the lower bounds are obtained by constructing explicit degenerations.

\item \textbf{Lie algebras.} We initiate the study of higher order structure tensors of Lie algebras. We characterize the geometric rank of the Lie bracket in any reductive Lie algebra, and we determine the border subrank of the higher order structure tensors of $\fraksl_2$ (traceless $2 \times 2$ matrices) precisely for all orders (\cref{thm: sl lie algebra}).

\item \textbf{Propagation of degeneration.} We prove that degeneration of higher order structure tensors of algebras propagates to lower order: if the structure tensors of the $k_0$-fold product of two algebras are related by a degeneration, then so are their structure tensors of the $k$-fold product for all $k \leq k_0$ (\cref{thm: degeneration between higher order algebras}). Moreover, if the two algebras have the same dimension, then degeneration at any single $k_0 \geq 2$ implies degeneration at every $k \in \bbN$. This is a consequence of a general geometric characterization (\cref{thm: degenerations via grassmannians}), which generalizes \cite[Thm. 4.3]{CGZGap}.
\end{enumerate}

\subsection*{Acknowledgements} Part of this work was developed while the authors were visiting the Simons Institute for the Theory of Computing, during the program on \emph{Complexity and Linear Algebra}, in Fall 2025. We thank the Simons Institute for its support and for providing a stimulating research environment. We thank Junho Choe for suggestions regarding equations on osculating varieties, Giorgio Ottaviani for helpful discussions on invariants of tensors, and Jarek Buczy\'nski, Jakub Jagie{\l\l}a and Joachim Jelisiejew for our exchanges on the material of \cref{section: flattenings}. JZ was supported by ERC Starting Grant 101220349 and NWO VIDI Grant VI.Vidi.243.195.

\section{Preliminaries and propagation of tensor degeneration}\label{sec: prelim}
In this section, we recall basics on tensors, in particular in the context of finite-dimensional algebras. We prove a generalization of \cite[Thm. 4.3]{CGZGap}, which will allow us to obtain propagation results on degeneration from high order tensors to low order tensors. Throughout the paper $V_1 \vvirg V_k$ are complex vector spaces with $n_i = \dim V_i$. Given a vector space $V$, an (affine) algebraic variety $X \subseteq V$ is a subset of $V$ characterized by the vanishing of a system of polynomial equations on $V$. We assume some basic knowledge of algebraic geometry.

\subsection{Restrictions and degenerations of tensors} The group $\GL(V_1) \times \cdots \times \GL(V_k)$ acts on $V_1 \ootimes V_k$ via
\[
 (g_1 \vvirg g_k) \cdot (v_1 \ootimes v_k) = (g_1 \ootimes g_k)(v_1 \ootimes v_k) = (g_1v_1 \ootimes g_kv_k),
\]
extended linearly. This action extends to a monoidal action of $\End(V_1) \times \cdots \times \End(V_k)$. Let $T,T' \in V_1 \ootimes V_k$ be two tensors. 
\begin{itemize}
\item We say that $T$ and $T'$ are isomorphic if they belong to the same orbit for the action of $\GL(V_1) \ttimes \GL(V_k)$. In other words, there exist invertible linear maps $g_i \in \GL(V_i)$ such that $T' = (g_1 \ootimes g_k) T$. 
\item We say that $T'$ is a restriction of $T$ if $T'$ belongs to the monoidal orbit of $T$ for the action of $\End(V_1) \ttimes \End(V_k)$. In other words, there exist linear maps $X_i \in \End(V_i)$ such that $T' = (X_1 \ootimes X_k) T$.
\item We say that $T'$ is a degeneration of $T$ if $T'$ belongs to the closure of the orbit of $T$ under the action of $\GL(V_1) \ttimes \GL(V_k)$, or equivalently under the action of $\End(V_1) \ttimes \End(V_k)$. By \cite[Thm. 2.33]{MumBook}, the closure can be taken equivalently in the Zariski or the Euclidean topology: in particular, $T$ degenerates to $T'$ if there exist curves $g_i(\eps) \in \GL(V_i)$ such that $T' = \lim_{\eps \to 0} (g_1(\eps) \ootimes g_k(\eps))T$.
\end{itemize}
We use the same terminology if the tensors $T,T'$ belong to different tensor spaces; in that case isomorphism, restriction and degeneration should be understood after re-embedding the tensor spaces into a common large enough tensor product. 

More generally, one can define degeneration in terms of the action of any algebraic group $G$ on a representation $W$: given $w_1, w_2 \in W$, we say that $w_2$ is a $G$-degeneration of $w_1$ if $w_2 \in \bar{ G \cdot w_1}$. Moreover, the action of $G$ on $W$ induces by composition an action of $G$ on every representation of $\GL(W)$ and in fact on every variety with an action of $\GL(W)$. For instance, the action of $G$ on $W$ induces an action of $G$ on the Grassmannian $\Gr(r,W)$ of $r$-dimensional linear subspaces of $W$, regarded in its Pl\"ucker embedding $\bbP \Lambda^r W$.

For a group $G$ acting on a space $W$, and $w \in W$, write $\Stab_G(w) = \{ g \in G : g \cdot w = w\}$ for the stabilizer subgroup of $w$ in $G$.
\subsection{Flattenings}\label{section: flattenings}
For every index set $I \subseteq [k]$, a tensor $T \in V_1 \ootimes V_k$ defines a linear map 
\[
T_I : \bigotimes_{i \in I} V_i^* \to \bigotimes_{j \notin I} V_j,
\]
called the $I$-th flattening map of $T$. Let $T_i$ denote the flattening map $T_{\{i\}}$. We say that a tensor~$T$ is \emph{concise} if the flattenings $T_i$ are injective for every $i$. The image $T_i(V_i^*) \subseteq \bigotimes_{j \neq i}V_j$ is a subspace of tensors of order $k-1$ of dimension at most $n_i$. This subspace uniquely determines $T$ up to the action of $\GL(V_i)$.

In \cite[Thm.~4.3]{CGZGap}, a characterization of degeneration in terms of degeneration of the flattening image was provided. A similar characterization in the case of tensor rank and border rank was also observed in \cite[Thm.~2.5]{BucLan} and \cite[Lemma 2.4]{GesOneVen}. Here, we provide a generalization of these results, which does not require a technical conciseness assumption. A similar generalization will appear in \cite{JJ26}. 

\begin{theorem}\label{thm: degenerations via grassmannians}
 Let $G$ be an algebraic group acting linearly on a vector space $W$ and let $A$ be a vector space. Let $T,S \in A \otimes W$ be two elements and let $E_T,E_S$ be the images of the flattening maps $T,S: A^* \to W$. Let $r = \dim E_T$ and regard $E_T$ as an element of $\Gr(r,W)$. The following are equivalent:
 \begin{enumerate}[\upshape(a)]
  \item $T$ degenerates to $S$ under the action of $(\GL(A) \times G)$ on $A \otimes W$;
  \item there exists a tensor $T' \in A \otimes W$ such that 
  \begin{itemize}
    \item $T$ degenerates to $T'$ under the action of $(\GL(A) \times G)$ on $A \otimes W$;
    \item the flattening $T' : A^* \to W$ has rank $r$;
    \item $T'$ restricts to $S$;
  \end{itemize}
  \item there is a linear space $E' \in \Gr(r,W)$ such that
  \begin{itemize}
    \item $E_T$ degenerates to $E'$ under the action of $G$ on $\Gr(r,W)$;
    \item $E_S \subseteq E'$.
  \end{itemize}
 \end{enumerate}
\end{theorem}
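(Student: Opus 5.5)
I will prove the cycle of implications (a)$\Rightarrow$(c)$\Rightarrow$(b)$\Rightarrow$(a). Throughout I use the dictionary between a tensor $T\in A\otimes W$ and its image $E_T=T(A^*)\subseteq W$. Two facts about this dictionary drive the argument. First, for $(h,g)\in\GL(A)\times G$ one has $E_{(h,g)T}=g\cdot E_T$. Second, a tensor $S$ is a restriction of $T$ under $\End(A)\times\{\id\}$ if and only if $E_S\subseteq E_T$. The second fact holds because if $E_S\subseteq E_T$, then $S:A^*\to W$ factors through $T$: writing $S=T\circ\psi$ for some linear $\psi:A^*\to A^*$ (lift a basis through the surjection $A^*\to E_T$), we get $S=(\psi^{\top}\otimes\id)T$.

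\textbf{(b)$\Rightarrow$(a).} This is formal. Restriction is a degeneration, since $\End(A)$ lies in the closure of $\GL(A)$. Orbit closures are transitive: if $T'\in\overline{\GL(A)\times G\cdot T}$, then $\overline{\GL(A)\times G\cdot T'}\subseteq\overline{\GL(A)\times G\cdot T}$. Hence $S\in\overline{\GL(A)\cdot T'}$ lies in the orbit closure of $T$.

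\textbf{(c)$\Rightarrow$(b).} Choose a curve, or more generally a net, $g_\eps\in G$ with $g_\eps E_T\to E'$ in $\Gr(r,W)$. Fix a basis $a_1,\dots,a_r$ of a complement of $\ker T$ in $A^*$ and write $T=\sum_{j\le r} \alpha_j\otimes w_j$, with $w_j=T(a_j)$ a basis of $E_T$ and $\alpha_j$ a dual system. Since $g_\eps E_T\to E'$, locally on $\Gr(r,W)$ the tautological bundle is trivial. So there are bases $u_1(\eps),\dots,u_r(\eps)$ of $g_\eps E_T$ converging to a basis $u_1,\dots,u_r$ of $E'$. Write $u_j(\eps)=\sum_i c_{ij}(\eps)\,g_\eps w_i$ with $c(\eps)\in\GL_r$, and extend $c(\eps)$ to $h_\eps\in\GL(A)$ acting on the span of the $\alpha_j$. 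Then $(h_\eps,g_\eps)T=\sum_j\alpha_j\otimes u_j(\eps)\to T':=\sum_j\alpha_j\otimes u_j$. This $T'$ has rank $r$ and image $E'\supseteq E_S$, so $T'$ restricts to $S$ by the dictionary. The delicate point is that limits in the Grassmannian of $G$-translates may require a net or curve over $\bbC((\eps))$ rather than a sequence in $G$. I would handle this by working with the Euclidean topology, which is allowed by the cited result of Mumford, or by noting that the orbit closure in $\Gr$ is the image of a closed set.

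\textbf{(a)$\Rightarrow$(c).} This is the main obstacle, because a degeneration can drop the rank of the flattening. Take $(h_\eps,g_\eps)$ with $(h_\eps,g_\eps)T\to S$. The spaces $E_\eps:=g_\eps E_T$ lie in the compact (projective) variety $\Gr(r,W)$, so after passing to a subsequence they converge to some $E'\in\overline{G\cdot E_T}$. It remains to show $E_S\subseteq E'$. For every $\phi\in A^*$, the vector $((h_\eps,g_\eps)T)(\phi)$ lies in $E_\eps$ and converges to $S(\phi)$. The incidence variety $\{(E,w): w\in E\}\subseteq\Gr(r,W)\times W$ is closed, so the limit satisfies $S(\phi)\in E'$. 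Hence $E_S\subseteq E'$.

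The care needed here is in topology: extracting a convergent subsequence in the Euclidean topology, using compactness of $\Gr$, and identifying $\overline{G\cdot E_T}$ in the Euclidean topology with the Zariski closure, again by the Mumford result. I expect this step, and the analogous net issue in (c)$\Rightarrow$(b), to be the only non-formal parts.
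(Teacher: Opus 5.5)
Your proof is correct and follows the same cycle (a)$\Rightarrow$(c)$\Rightarrow$(b)$\Rightarrow$(a) as the paper. The key ideas are also the same: take $E'$ as the limit of $g_\eps E_T$ and use closedness of the incidence to get $E_S \subseteq E'$, then realize $E'$ as the image of a rank-$r$ limit tensor $T'$. The only difference is that you prove directly, via a local frame of the tautological bundle and the closed incidence variety, the two steps that the paper imports from \cite[Thm.~4.3]{CGZGap}.
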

 \begin{proof}
    The proof is similar to the one of \cite[Thm.~4.3]{CGZGap}.

    We start by proving that (a) implies (c). Suppose $T$ degenerates to $S$ and let $g_\eps \in G$ and $x_\eps \in \GL(A)$ be curves such that 
    \[
    S = \lim_{\eps \to 0} (x_\eps \otimes g_\eps) T.
    \]
    The curve $g_\eps$ defines a curve $g_\eps E_T$ in $\Gr(r, W)$. Let $E' = \lim_{\eps \to 0} g_\eps E_T$. The same argument as in the first part of the proof of \cite[Thm.~4.3]{CGZGap} shows that $E_S \subseteq E'$. 
    
    The fact that (c) implies (b) is a direct consequence of \cite[Thm.~4.3]{CGZGap}. More precisely, let $T' \in A \otimes W$ be any tensor such that the image of $T' : A^* \to W$ is $E'$; in particular, such a flattening has rank equal to $r$. By \cite[Thm.~4.3]{CGZGap}, we deduce that $T$ degenerates to $T'$ under the action of $\GL(A) \times G$. The condition that $E_S \subseteq E'$ is equivalent to the fact that $T'$ restricts to $S$.

    The fact that (b) implies (a) is immediate: if $T$ degenerates to $T'$ and $T'$ restricts to $S$, then $T$ degenerates to $S$.
\end{proof}
The difference between \cref{thm: degenerations via grassmannians} and \cite[Thm.~4.3]{CGZGap} is that the latter has the additional assumption that $\dim E_S = \dim E_T = r$, in which case $E = E'$ and $S = T'$ up to the action of $\GL(A)$. In hindsight, the upgraded \cref{thm: degenerations via grassmannians} guarantees that any degeneration can be factored into a degeneration preserving conciseness, followed by a restriction: 
\begin{corollary}
Let $T,S \in V_1 \ootimes V_k$ be tensors such that $T$ is concise and degenerates to~$S$. Then there is a concise tensor $\tilde{T}$ such that $T$ degenerates to $\tilde{T}$ and $\tilde{T}$ restricts to $S$.
\end{corollary}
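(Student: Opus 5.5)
The plan is to apply \cref{thm: degenerations via grassmannians} once for each factor, inducting on the number of factors that have already been made concise. We start from $T$, which is concise, and keep track of a tensor $T^{(j)}$ with three properties:
\begin{itemize}
\item $T$ degenerates to $T^{(j)}$;
\item $T^{(j)}$ degenerates to a tensor $S^{(j)}$ that restricts to $S$;
\item the flattenings $T^{(j)}_1 \vvirg T^{(j)}_j$ are injective.
\end{itemize}
At step $0$ we take $T^{(0)} = T$ and $S^{(0)} = S$.

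\textbf{The inductive step.} Set $A = V_{j+1}$ and $W = \bigotimes_{i \neq j+1} V_i$, and let $G = \prod_{i\neq j+1}\GL(V_i)$ act on $W$. Since $T^{(j)}$ degenerates to $S^{(j)}$ under $\GL(A)\times G$, the implication (a)$\Rightarrow$(b) of \cref{thm: degenerations via grassmannians} gives a tensor $T^{(j+1)}$ with the following properties:
\begin{itemize}
\item $T^{(j)}$ degenerates to $T^{(j+1)}$;
\item the $(j+1)$-th flattening of $T^{(j+1)}$ has rank $r = \operatorname{rank} T^{(j)}_{j+1}$;
\item $T^{(j+1)}$ restricts to $S^{(j)}$.
\end{itemize}
The third property lets us take $S^{(j+1)} = T^{(j+1)}$ itself, which trivially degenerates to a tensor restricting to $S$.

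\textbf{Checking the invariants.} Two points need care.

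First, we need $r = n_{j+1}$. Along the chain of degenerations, flattening ranks can only drop. The $(j+1)$-th flattening of $T^{(j)}$, however, has rank $n_{j+1}$ as long as that factor has never been touched. The construction in the proof of the theorem (via \cite[Thm.~4.3]{CGZGap}) produces $T'$ as a $\GL(A)\times G$-degeneration in which the $A$-flattening rank equals $\dim E_T$. Because $T$ is concise, $\dim E_T = n_{j+1}$ at every stage, so $T^{(j+1)}_{j+1}$ is injective.

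Second, and this is the main obstacle, we must keep the injectivity of the earlier flattenings $T^{(j+1)}_i$ for $i \le j$. A degeneration acting through $G$ on the other factors could drop these ranks. This is where I would change strategy. Instead of iterating, I would repackage so that only a single application of the theorem is needed.

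\textbf{Simpler alternative, which I expect to be the actual proof.} Since degenerations occur along curves $g_\eps$, the limit tensor $T'$ has every flattening $T'_i$ of rank at most $n_i$. Full rank is an open condition, so it suffices to produce a $T'$ that is a degeneration of $T$ and restricts to $S$ with all flattenings of full rank. Take the curve $(g_1(\eps)\vvirg g_k(\eps))$ with $S = \lim (g_1(\eps)\ootimes g_k(\eps))T$. In each factor separately, apply the Grassmannian limit argument of (a)$\Rightarrow$(c) to the flattening image $E_i(\eps) = (\bigotimes_{l\ne i} g_l(\eps)) T_i(V_i^*)$. This gives limits $E_i' \in \Gr(n_i, \bigotimes_{l\neq i}V_l)$ with $E_{S,i}\subseteq E_i'$.

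The remaining step is to find a single tensor whose $i$-th flattening image is $E_i'$ for all $i$ simultaneously. I would obtain it by normalizing the curve. Write $g_i(\eps) = h_i(\eps)\, d_i(\eps)$ via a Cartan/Smith-type decomposition with $h_i(\eps)$ regular and invertible at $0$. Then rescale each factor by the inverse of the leading $\eps$-power on each weight space. This gives $\tilde T = \lim$ of the rescaled curve, which has the limiting spaces $E_i'$ as flattening images. It is therefore concise, and it restricts to $S$ by applying the discarded diagonal parts, which are polynomial at $\eps = 0$.

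Verifying that this simultaneous rescaling keeps the limit finite and concise in all factors at once is the delicate point. I would handle it one factor at a time, using the theorem as in the first paragraph, and note that rescaling factor $i$ by $\GL(V_i)$ alone does not change the flattening images of the other factors up to that factor's action, so their ranks are preserved.
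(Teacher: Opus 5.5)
There is a genuine gap, and it sits in the inductive step itself, not only in the point you postpone. You apply \cref{thm: degenerations via grassmannians} to the degeneration of $T^{(j)}$ to $S^{(j)}$. The rank it guarantees is therefore $r=\rank T^{(j)}_{j+1}=\dim E_{T^{(j)}}$, not $\dim E_T$; your sentence ``because $T$ is concise, $\dim E_T=n_{j+1}$ at every stage'' conflates the two. For $j\ge 1$, $T^{(j)}$ comes from a degeneration in which $G=\prod_{i\neq j}\GL(V_i)$ acts on $V_{j+1}$, so $T^{(j)}_{j+1}$ may well have dropped rank. Worse, since you set $S^{(j)}=T^{(j)}$, from the second step on you are applying the theorem to the trivial degeneration of $T^{(j)}$ to itself, so no progress is made.

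The idea you are missing is that your ``main obstacle'' is not an obstacle: restriction cannot increase flattening ranks. If $T'$ restricts to $S'$ via $(X_1\vvirg X_k)$, then $S'_i=(\bigotimes_{l\neq i}X_l)\circ T'_i\circ X_i^{\top}$. Hence $\rank S'_i\le\rank T'_i$, and $T'_i$ is injective whenever $S'_i$ is.

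With this, the right bookkeeping keeps the source fixed at the concise $T$ and moves the target. This is the paper's proof, phrased there as an induction on the number of non-injective flattenings of $S$. Start with $S'=S$. If $S'_i$ is not injective, apply (a)$\Rightarrow$(b) with $A=V_i$ to the degeneration of $T$ to $S'$. This yields a tensor $T'$ which:
\begin{itemize}
\item is a degeneration of $T$;
\item has $\rank T'_i=\dim E_T=n_i$ (this is where conciseness of $T$ is used);
\item restricts to $S'$, hence is injective in every factor where $S'$ was.
\end{itemize}
So $T'$ has strictly fewer non-injective flattenings than $S'$, and it restricts to $S$ by transitivity. Replace $S'$ by $T'$ and repeat, at most $k$ times.

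Your ``simpler alternative'' does not repair this. You would need a single degeneration of $T$ whose $i$-th flattening image is the limit space $E'_i$ for every $i$ simultaneously; that is exactly what requires proof, and you leave it open. The claim you propose to settle it with is false: rescaling one factor by a curve in $\GL(V_i)$ preserves the ranks of the other flattenings only for $\eps\neq0$, not in the limit. For instance, $\mathrm{diag}(1,\eps)$ on the first factor of $\bfu_3(2)=e_0^{\otimes 3}+e_1^{\otimes 3}$ gives the limit $e_0^{\otimes 3}$, all of whose flattenings have rank $1$. Also, a factorization $g_i(\eps)=h_i(\eps)d_i(\eps)$ with $h_i$ invertible at $\eps=0$ and $d_i$ diagonal need not exist. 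The Smith/Cartan form is $k_1(\eps)d(\eps)k_2(\eps)$, and the right factor cannot be dropped.
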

\begin{proof}
Let $m$ be the number of non-injective flattenings of $S$, that is 
\[
m = \# \{ j \in [k] : \rank( S_j : V_j^* \to \textstyle \bigotimes_{i \neq j} V_i) < \dim V_j\}.
\]
We proceed by induction on $m$. If $m = 0$, then $S$ is concise and there is nothing to prove.

If $m \geq 1$, assume the first flattening $S_1$ is not injective and apply \cref{thm: degenerations via grassmannians} with $A = V_1$ and $W = V_2 \ootimes V_k$ and $G = \GL(V_2) \ttimes \GL(V_k)$. There exists a tensor $T' \in V_1 \ootimes V_k$ such that $T' : V_1^* \to V_2 \ootimes V_k$ is injective, $T$ degenerates to $T'$ and $T'$ restricts to $S$. 

Since $T'$ restricts to $S$, the $j$-th flattening of $T'$ is injective whenever the $j$-th flattening of $S$ is injective. Moreover, by construction, the first flattening of $T'$ is injective, whereas the first flattening of $S$ is not. This shows that the number of non-injective flattenings of $T'$ is strictly smaller than that of $S$.

By the induction hypothesis, there exists a concise $\tilde{T}$ such that $T$ degenerates to $\tilde{T}$ and $\tilde{T}$ restricts to $T'$, hence to $S$. This concludes the proof.
\end{proof}

In the tensor setting, \cref{thm: degenerations via grassmannians} relates degeneration of tensors of order $k+1$ to degeneration of linear spaces of tensors of order $k$. The next result can be regarded as a \emph{local} version of \cref{thm: degenerations via grassmannians}, which relates degeneration of linear spaces with degeneration of elements in such spaces. 
\begin{proposition}\label{prop: degeneration in flattening image}
Let $G$ be an algebraic group acting linearly on a vector space $W$. Let $E,F \in \Gr(r, W)$ be linear spaces such that $E$ degenerates to $F$ under the action of $G$ on $\Gr(r,W)$. Suppose $\Stab_G(E)$ has a dense orbit in $E \subseteq W$. Then every $S \in F$ is a degeneration of the generic element $T \in E$: more precisely, there is a non-empty Zariski open subset $U \subseteq E$ such that, for every $S \in F$, and every $T \in U$, $S$ is a degeneration of $T$.
\end{proposition}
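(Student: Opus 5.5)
The plan is to realize the degeneration of linear spaces by a curve in $G$, and then transport a dense orbit of the stabilizer along that curve. Since $E$ degenerates to $F$ in $\Gr(r,W)$, and Zariski and Euclidean orbit closures agree, I will fix a curve $g_\eps \in G$ with $\lim_{\eps\to 0} g_\eps E = F$ in $\Gr(r,W)$. Let $H = \Stab_G(E)$, and let $U \subseteq E$ be the dense $H$-orbit in $E$. This orbit is constructible and dense, so it contains a nonempty Zariski open subset of $E$; in fact, an orbit that is dense is open in its closure, so $U$ itself is open. Every $T \in U$ has $\overline{G\cdot T} \supseteq \overline{H \cdot T} = E$. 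The goal is therefore to show $F \subseteq \overline{G\cdot T}$.

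The key step is that $\overline{G \cdot E}$, meaning the closure of the union $\bigcup_{g\in G} gE$, contains $F$. Set $Y = \overline{G\cdot T}$, which is a closed $G$-stable subvariety of $W$. Since $E \subseteq Y$, we get $gE \subseteq Y$ for all $g \in G$. Now take $S \in F$. Because $g_\eps E \to F$ in the Grassmannian, there exist $S_\eps \in g_\eps E$ with $S_\eps \to S$. To see this, choose a basis $e_1(\eps) \vvirg e_r(\eps)$ of $g_\eps E$ converging to a basis of $F$; this is possible by using a local trivialization of the tautological bundle near $F$, i.e. an affine chart of the Grassmannian. Then write $S$ in the limit basis and take the same coefficients. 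Since $S_\eps \in g_\eps E \subseteq Y$ and $Y$ is closed, it follows that $S \in Y$. Hence $S \in \overline{G\cdot T}$, i.e. $S$ is a degeneration of $T$, for every $T\in U$ and every $S\in F$.

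The main subtlety is the closedness step combined with the topology. $Y$ is Zariski closed, hence Euclidean closed. The curve $g_\eps$ may be taken algebraic or analytic, and $\eps\to0$ through nonzero values, with the limit taken in the Euclidean topology, which is legitimate by \cite[Thm. 2.33]{MumBook}. A second point needing care is that degeneration here is meant under $G$ itself, not $\GL(W)$; this holds since $Y$ is the $G$-orbit closure. Finally, I will remark that the choice of $U$ does not depend on $S$ or on the curve $g_\eps$, which yields the uniform statement.
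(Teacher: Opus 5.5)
Your proof is correct, but it takes a different route from the paper's.

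The paper works on the projectivized tautological bundle $\calS \subseteq \Gr(r,W)\times \bbP W$. It uses the dense $\Stab_G(E)$-orbit to identify $\calG=\overline{G\cdot \pi_{\Gr}^{-1}(E)}$ with the orbit closure of a single point $(E,[T])$. It then invokes semicontinuity of fiber dimension for $\calG \to \Gr(r,W)$ to conclude that the fiber over $F$ is all of $\bbP F$.

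You bypass both the incidence variety and the fiber-dimension argument. You note that $Y=\overline{G\cdot T}$ is closed, $G$-stable and contains $E$, hence contains every translate $gE$. This gives $\overline{G\cdot T}=\overline{\bigcup_{g\in G} gE}$ for every $T$ in the open orbit, which is the affine counterpart of the paper's identity $\calG=\overline{G\cdot(E,[T])}$. A Grassmannian chart then lets you approximate each $S\in F$ by points $S_\eps\in g_\eps E\subseteq Y$.

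What your route buys:
\begin{itemize}
\item It is more elementary: no incidence variety, no fiber-dimension or properness considerations.
\item It works directly in $W$ rather than in $\bbP W$, so there is no need to pass from the projective statement $[S]\in\overline{G\cdot[T]}$ back to an affine one.
\item It only needs a sequence $g_m$ with $g_mE\to F$ rather than a curve.
\end{itemize}

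What the paper's formulation records in addition is that the pair $(F,[S])$ lies in the $G$-orbit closure of $(E,[T])$. That is, the degeneration of $T$ to $S$ can be realized compatibly with the degeneration of $E$ to $F$. This is not needed for the statement.
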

\begin{proof}
    Consider the projectivized tautological bundle $\calS \to \Gr(r,W)$, regarded as the subvariety 
    \[
    \calS = \{ (W', [T]) \in \Gr(r,W) \times \bbP W : [T] \in \bbP W' \} \subseteq \Gr(r,W) \times \bbP W;
    \]
    here $[T]$ denotes the class of $T \in W$ in $\bbP W$. Let $\pi_\Gr : \calS \to \Gr(r,W)$ and $\pi_W : \calS \to \bbP W$ be the two projections. The action of $G$ on $ \bbP W$ extends to an action to $\Gr(r,W)$ and diagonally to one on $\calS$: for $g \in G$ and $(W',[T]) \in \calS$, we have  
    \[
    g \cdot (W', [T]) = (gW',[gT]).
    \]
    For every $T \in E$, the orbit-closure $\bar{G \cdot (E,[T])}$ contains a point $(F, [S'])$ for some $S' \in F$, because $F \in \bar{G \cdot E}$. 

    Let $\calG = \bar{G \cdot \pi_{\Gr}^{-1}(E)}$, which is an irreducible subvariety of $\calS$ because it is the orbit-closure of $\{ E \} \times \bbP E \subseteq \calS$. Since $\Stab_G(E)$ has a dense orbit in $E$, we have 
    \[
   \calG = \bar{G \cdot \pi_{\Gr}^{-1}(E)} = \bar{ G \cdot \Stab_{G}(E) \cdot (E,[T])} = \bar{ G \cdot (E,[T])},
    \] 
    for a generic $T \in E$.

Moreover, for any $g \in G$, $\pi_W |_\calG ^{-1}( g E)$ is the entire $\bbP (gE)$. By semicontinuity of dimension of the fiber, the same holds at $F$. This shows that $(F,[S]) \in \calG$ for every $S \in F$ and therefore $(F,[S])$ is in the orbit-closure of $(E,[T])$. In particular $S$ is a degeneration of $T$.
\end{proof}

\subsection{Structure tensors of algebras}\label{subsec: structure tensors}

An algebra is a finite-dimensional vector space $\calA$ endowed with a bilinear multiplication map $\calA \times \calA \to \calA$, defining the \emph{structure tensor} $T_\calA \in \calA \otimes \calA^* \otimes \calA^*$. We do not require any property on the multiplication map: in particular, this framework includes associative algebras, skew algebras, Lie algebras and other algebraic structures. An algebra is unital if there exists an element $1_\calA \in \calA$ such that $T_\calA(1_\calA,-) = T_\calA(-,1_\calA) = \id_\calA$, when regarded as a map $\calA \to \calA$. Most of our results focus on the case of associative unital algebras. \cref{sec: lie sln} studies the Lie algebra case, in particular for $\calA = \fraksl_n$.

The $k$-fold product in an algebra $\calA$ is the multilinear map defined recursively as
\begin{equation}\label{eq: k-fold product}
\begin{aligned}
T^{(k)}_\calA : \underbrace{\calA \ttimes \calA}_k &\to \calA \\
(a_1 \vvirg a_k) &\mapsto T^{(k-1)}_\calA( T^{(2)}_{\calA}(a_1,a_2),a_3\vvirg  a_k )
\end{aligned}
\end{equation}
which defines a tensor $T^{(k)} _\calA \in \calA \otimes \calA^* \ootimes \calA^* $ of order $k+1$; we call it the $k$-th structure tensor of $\calA$. When $k=1$, we identify $T^{(1)}_{\calA}$ with the identity map $\calA \to \calA$ and, when $k=2$, $T^{(2)}_\calA = T_\calA$ is the structure tensor of $\calA$. In principle, if $\calA$ is not associative, then one may choose a different way of \emph{parenthesizing} the $k$-fold product in the definition of $T^{(k)}_\calA$. In \eqref{eq: k-fold product}, we consider the parenthesization from left to right, which is convenient to relate properties of $T^{(k)}_\calA$ for different values of $k$. We assume that $\calA$ is unital and associative for the rest of this section.

The group $U(\calA)$ of the units of $\calA$ naturally embeds in $\GL(\calA)$ in two ways, acting by left and by right multiplication in $\calA$. This induces two actions on $\calA$ and two actions on $\calA^*$, defined as follows for $u \in U(\calA)$, $a \in \calA$ and $\alpha \in \calA^*$:  
\[
\begin{array}{lll}
    u \cdot_L a = ua, & \qquad  &u \cdot_R a = a u^{-1},   \\
u \cdot_L \alpha = \alpha(u^{-1} \cdot - ), &\qquad &u \cdot_R \alpha = \alpha(- \cdot u);
\end{array}
\]
here $\alpha(u^{-1} \cdot - )$ and $\alpha(- \cdot u)$ indicate the linear maps obtained by precomposing $\alpha$ with the left multiplication by $u^{-1}$ or the right multiplication by $u$. In this way, the product $U(\calA)^{\times k+1}$ induces a \emph{sandwiching} action on $\calA \otimes \calA^* \ootimes \calA^*$; for any multilinear map $T: \calA \ttimes \calA \to \calA$, and any $(u_0 \vvirg u_k) \in U(\calA)^{\times k+1}$, define
\[
(u_0 \vvirg u_k) \cdot T :  (a_1 \vvirg a_k) \mapsto u_0 \cdot T( u_0^{-1} a_1 u_1 , u_1^{-1} a_2 u_2 \vvirg u_{k-1}^{-1} a_k u_{k})  \cdot u_{k}^{-1} .
\]
We can immediately show that this action stabilizes the structure tensors of $\calA$.
\begin{lemma}\label{lem: units stabilize}
Let $\calA$ be a unital associative algebra. For every $u_0 \vvirg u_k \in U(\calA)$, we have
\[
(u_0 \vvirg u_k) \cdot T_\calA^{(k)} = T_\calA^{(k)}.
\]
\end{lemma}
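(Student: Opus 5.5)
The plan is to evaluate the transformed multilinear map on an arbitrary tuple $(a_1 \vvirg a_k) \in \calA^{\times k}$ and check that it agrees with $T^{(k)}_\calA(a_1 \vvirg a_k)$. Since both sides are multilinear maps $\calA^{\times k} \to \calA$, agreement on all tuples gives equality as tensors in $\calA \otimes \calA^* \ootimes \calA^*$.

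The first step is to identify $T^{(k)}_\calA$ with the ordinary iterated product. I will show by induction on $k$, using the recursive definition \eqref{eq: k-fold product}, that $T^{(k)}_\calA(a_1 \vvirg a_k) = a_1 a_2 \cdots a_k$. The case $k=1$ is the identity map, and $k=2$ is the definition of the multiplication. For the inductive step, the recursion gives
\[
T^{(k)}_\calA(a_1 \vvirg a_k) = T^{(k-1)}_\calA(a_1a_2, a_3 \vvirg a_k) = (a_1a_2)a_3\cdots a_k .
\]
By associativity, this equals $a_1 \cdots a_k$ regardless of bracketing.

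The second step is to substitute into the formula for the sandwiching action. Evaluating on the tuple gives
\[
u_0 \,(u_0^{-1}a_1u_1)(u_1^{-1}a_2u_2)\cdots(u_{k-1}^{-1}a_ku_k)\, u_k^{-1}.
\]
By associativity this product can be regrouped freely. Each adjacent pair $u_iu_i^{-1}$ collapses to $1_\calA$ for $i = 1 \vvirg k-1$, and so do the outer pairs $u_0u_0^{-1}$ and $u_ku_k^{-1}$. What remains is $a_1 \cdots a_k$, which proves the claim.

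There is no real obstacle here. The one point requiring care is to confirm that the componentwise actions $\cdot_L$ and $\cdot_R$ on $\calA$ and $\calA^*$, combined on each tensor factor, really produce the displayed formula for $(u_0 \vvirg u_k)\cdot T$. Concretely, the $0$-th factor $\calA$ is acted on by $u_0$ on the left and by $u_k$ on the right. The $i$-th dual factor receives $u_{i-1}$ through $\cdot_L$ and $u_i$ through $\cdot_R$, which precompose the argument with $a_i \mapsto u_{i-1}^{-1}a_iu_i$. Since the paper takes the displayed formula as the definition of the action, I will use it directly and only note this consistency in passing.
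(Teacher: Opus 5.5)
Your proposal is correct and matches the paper's proof: you identify $T^{(k)}_\calA$ with the iterated product $a_1 \cdots a_k$, substitute into the sandwiching formula, and use associativity to cancel the units. Your induction justifying $T^{(k)}_\calA(a_1 \vvirg a_k) = a_1 \cdots a_k$ and your check that the componentwise actions give the displayed formula are details the paper leaves implicit, and both are correct.
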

\begin{proof}
As a multilinear map, we have 
\[
T^{(k)}_{\calA}(a_1 \vvirg a_k) = a_1 \cdots a_k.
\]
By definition 
\begin{align*}
(u_0 \vvirg u_k) \cdot T_\calA^{(k)} (a_1 \vvirg a_k) = & u_0 \cdot T_\calA^{(k)} ( u_0^{-1} a_1 u_1  \vvirg u_{k-1}^{-1} a_k u_k)  \cdot u_{k}^{-1} =\\
& u_0 u_0^{-1} a_1 u_1 \cdots u_{k-1}^{-1} a_k u_k u_k^{-1} = a_1 \cdots a_k,
\end{align*}
where the last equality crucially uses the associativity of $\calA$. This shows that $T^{(k)}_{\calA}$ and $(u_0 \vvirg u_k) \cdot T^{(k)}_{\calA}$ coincide as multilinear maps. Therefore they are the same tensor, showing that $(u_0 \vvirg u_k)$ stabilizes $T^{(k)}_{\calA}$.
\end{proof}
Let $\calU$ be the image of  $U(\calA)^{\times k+1}$ in $\GL(\calA) \times \GL(\calA^*) \ttimes \GL(\calA^*)$ via the group homomorphism defining the sandwiching action. \cref{lem: units stabilize} shows that $\calU$ is a subgroup of the stabilizer of $T_\calA^{(k)}$ in $\GL(\calA) \times \GL(\calA^*) \ttimes \GL(\calA^*)$.

The following result shows that structure tensors of unital associative algebras satisfy the hypothesis required to apply \cref{prop: degeneration in flattening image}.
\begin{lemma}\label{lem: dense orbit for associative algebras}
Let $\calA$ be a unital associative algebra. Let $E = \image ( (T^{(k)}_\calA)_{k} : \calA \to \calA \otimes (\calA^*)^{\otimes (k-1)})$ be the image of the $k$-th flattening of $T_\calA^{(k)}$. Then $\calU$ stabilizes $E$ and has a dense orbit in $E$. 
\end{lemma}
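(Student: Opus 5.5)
Write $T = T^{(k)}_\calA$. I will identify the $k$-th flattening explicitly, show that $E$ is the set of right-multiplication maps, and then read off a dense orbit from the group of units. We regard $T$ as a multilinear map $(a_1 \vvirg a_k) \mapsto a_1 \cdots a_k$, with $a_k$ in the last factor $\calA^*$ (the index $k$ in the statement).

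The flattening in that factor sends $a_k \in \calA$ to the multilinear map
\[
T(-, \ldots, -, a_k) : (a_1 \vvirg a_{k-1}) \mapsto a_1 \cdots a_{k-1} a_k .
\]
This map is the $(k-1)$-fold product followed by right multiplication by $a_k$. Write $R_a = T(-, \ldots, -, a)$, so that $E = \{ R_a : a \in \calA\}$. The map $a \mapsto R_a$ is linear. It is injective because $\calA$ is unital: $R_a(1 \vvirg 1) = a$. So $E \cong \calA$ via $a \mapsto R_a$.

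Next, the action of $(u_0 \vvirg u_k) \in \calU$ on the element $R_a$, viewed as a tensor in $\calA \otimes (\calA^*)^{\otimes(k-1)}$, is the sandwiching action restricted to the first $k$ factors. I will check that
\[
(u_0 \vvirg u_k)\cdot (\text{factor } k)\, a = u_{k-1}^{-1} a u_k .
\]
Since $T$ is $\calU$-invariant by \cref{lem: units stabilize}, the induced action on the first $k$ factors maps $E$ to itself. Concretely, one verifies directly that
\[
R_a \mapsto R_{u_{k-1}^{-1} a u_k}\cdot(\text{twist})
\]
lands back in $E$. The cleaner way is the identity $(g \otimes h)T = T$, where $g$ is the action on the first $k$ factors and $h$ the action on the last. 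It gives $g(E) = g(T_k(\calA)) = T_k(h^{-1}\calA) = E$. Hence $\calU$ stabilizes $E$. Moreover $g \cdot R_a = R_{h^{-1}\cdot a}$ with $h^{-1}\cdot a = u_{k-1} a u_k^{-1}$ (up to the convention for the dual action). In particular, even the subgroup where only $u_k$ varies acts on $E \cong \calA$ by $a \mapsto a u$, with $u \in U(\calA)$.

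The orbit of $R_1$ is therefore $\{R_u : u \in U(\calA)\}$, which corresponds to $U(\calA) \subseteq \calA$. The group of units is Zariski open in $\calA$: it is the complement of the hypersurface $\det(L_a) = 0$, where $L_a$ is left multiplication. It is nonempty since $1 \in U(\calA)$. A nonempty open subset of the irreducible space $\calA$ is dense, so this orbit is dense in $E$.

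The main obstacle is purely bookkeeping: tracking the dual actions so that the induced action on $E$ is indeed $R_a \mapsto R_{u_{k-1} a u_k^{-1}}$, or some variant of it. Whatever the exact variant, it is surjective onto $U(\calA)\cdot$ (units), which is all the density argument needs.
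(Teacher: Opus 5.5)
Your proof is correct and follows the paper's argument. Stabilization of $E$ comes from the $\calU$-invariance of $T^{(k)}_\calA$ (\cref{lem: units stabilize}), $E$ is the space of maps $R_b$, and the last copy of $U(\calA)$ moves $R_1$ onto all $R_u$ with $u$ a unit, a dense set. Your formula $R_a \mapsto R_{u_{k-1} a u_k^{-1}}$ for the induced action is the right one, so the vague ``twist'' line can be deleted, and your description of $U(\calA)$ as the nonempty Zariski open set $\{\det L_a \neq 0\}$ justifies a density claim the paper only asserts.
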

\begin{proof}
    The statement that $\calU$ stabilizes $E$ is immediate because, by \cref{lem: units stabilize}, we have $\calU \subseteq \Stab_{G}(T_\calA^{(k)})$, where $G = \GL(\calA) \times \GL(\calA^*) \ttimes \GL(\calA^*)$.

    By construction, regarding the elements of $E$ as multilinear maps, we have 
    \[
    E = \{ T_b  : \calA ^{\times (k-1)} \to \calA: T_b(a_1 \vvirg a_{k-1}) = a_1 \cdots a_{k-1}b \text{ for some $b \in \calA$}\}. 
    \]
    The image in $\calU$ of the last copy $U(\calA) \subseteq U(\calA)^{\times (k+1)}$ acts by right multiplication on the last factor, hence all tensors $T_b$ where $b$ is a unit lie in the same $U(\calA)$-orbit. Since units are dense in $\calA$, this concludes the proof.
\end{proof}

By applying \cref{lem: dense orbit for associative algebras} and \cref{prop: degeneration in flattening image}, we obtain the following fundamental result: degeneration of higher order structure tensors propagates to lower order ones. 
\begin{theorem}\label{thm: degeneration between higher order algebras}
    Let $\calA,\calB$ be unital associative algebras. Let $k_0 \geq 2$ be an integer such that $T_\calA^{(k_0)}$ degenerates to $T_\calB^{(k_0)}$. Then $T_\calA^{(k)}$ degenerates to $T_\calB^{(k)}$ for every $k \leq k_0$.

   Moreover, if $\dim \calA = \dim \calB$, then $T_\calA^{(k)}$ degenerates to $T_\calB^{(k)}$ for every $k$.
\end{theorem}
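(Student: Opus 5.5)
The plan is to prove the first statement by descending induction on $k$. It suffices to show that if $T_\calA^{(k)}$ degenerates to $T_\calB^{(k)}$ for some $k \geq 2$, then $T_\calA^{(k-1)}$ degenerates to $T_\calB^{(k-1)}$.

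First, $\dim \calB \leq \dim \calA$. The flattenings of $T^{(k)}_\calA$ have rank $\dim \calA$ because $\calA$ is unital, and flattening ranks are lower semicontinuous along degenerations. So we may embed $\calB$ as a vector space into a space of dimension $\dim\calA$, and place both tensors in $V \otimes (V^*)^{\otimes k}$ with $V \cong \calA$.

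We apply \cref{thm: degenerations via grassmannians}, direction (a)$\Rightarrow$(c), with the following choices:
\begin{itemize}
\item $A$ is the last factor $V^*$;
\item $W = V \otimes (V^*)^{\otimes (k-1)}$;
\item $G = \GL(V) \times \GL(V^*)^{\times (k-1)}$.
\end{itemize}
The image of the last flattening of $T^{(k)}_\calA$ is the space $E_\calA = \{T_b : b \in \calA\}$ described in the proof of \cref{lem: dense orbit for associative algebras}, where $T_b(a_1 \vvirg a_{k-1}) = a_1\cdots a_{k-1}b$. It has dimension $r = \dim \calA$. Analogously, $E_\calB$ is the corresponding space of maps for $\calB$. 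The theorem gives $E' \in \Gr(r,W)$ with $E_\calA$ degenerating to $E'$ under $G$ and $E_\calB \subseteq E'$.

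Next we use \cref{lem: dense orbit for associative algebras}. The group $\calU$ fixes $T^{(k)}_\calA$, and its first $k$ factors act through $G$ on $W$ while its last factor acts on $A$. Hence the image of $\calU$ in $G$ lies in $\Stab_G(E_\calA)$ and has a dense orbit in $E_\calA$. By \cref{prop: degeneration in flattening image}, every element of $E'$, and in particular $T_{1_\calB} = T^{(k-1)}_\calB \in E_\calB$, is a $G$-degeneration of a generic $T_b \in E_\calA$.

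For a unit $b$, the map $T_b$ is obtained from $T_{1_\calA} = T^{(k-1)}_\calA$ by right multiplication by $b$ on the output factor. So $T_b$ is isomorphic to $T^{(k-1)}_\calA$, and since units are dense we may take $b$ to be a unit. This gives that $T^{(k-1)}_\calA$ degenerates to $T^{(k-1)}_\calB$. The case $k=1$ is the trivial fact that $\id_{\calA}$ degenerates to (indeed restricts to) $\id_\calB$ when $\dim\calB\le\dim\calA$.

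For the second statement, assume $\dim\calA=\dim\calB$. By the first part, degeneration at $k_0$ gives degeneration at $k=2$, so it remains to go upward. At order three with equal dimensions, degeneration of unital structure tensors is equivalent to degeneration of algebras \cite{Poon08,BlaLys}. That is, there is a curve $g_\eps\in\GL(V)$ such that the transported multiplications
\[
\mu_\eps(x,y)=g_\eps\,\mu_\calA(g_\eps^{-1}x,g_\eps^{-1}y)
\]
converge to $\mu_\calB$. Each $\mu_\eps$ is associative and isomorphic to $\mu_\calA$. Its $k$-fold left-parenthesized product is $g_\eps\,\mu^{(k)}_\calA(g_\eps^{-1}\cdot,\dots,g_\eps^{-1}\cdot)$, which is a point of the $\GL(V)^{\times(k+1)}$-orbit of $T^{(k)}_\calA$. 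The $k$-fold product is a polynomial expression in the bilinear multiplication, so it depends continuously on $\mu_\eps$. Therefore it converges to $\mu^{(k)}_\calB=T^{(k)}_\calB$, and $T^{(k)}_\calA$ degenerates to $T^{(k)}_\calB$ for every $k$.

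The main obstacle is this last step: extracting a single-group (transport-of-structure) degeneration from a tensor degeneration under three independent groups. I would cite the Poon / Bl\"aser--Lysikov equivalence for it. If a self-contained argument is needed, I would first normalize with \cref{thm: degenerations via grassmannians}, using conciseness and equal dimensions so that $E'=E_\calB$. I would then use the units $1_\calA$ and $1_\calB$ to align the three factors, in the spirit of \cref{lem: units stabilize}.
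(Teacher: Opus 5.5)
Your proposal is correct and follows the paper's proof. The descending step combines \cref{prop: degeneration in flattening image} with \cref{lem: dense orbit for associative algebras}; you make explicit the appeal to \cref{thm: degenerations via grassmannians} that produces $E'\supseteq E_\calB$, which the paper leaves implicit. The equal-dimension case rests, as in the paper, on the Bl\"aser--Lysikov symmetric degeneration at order three, and your continuity argument through the $\GL(V)$-equivariant polynomial map $\mu\mapsto\mu^{(k)}$ is a cleaner packaging of the paper's explicit induction on limits.
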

\begin{proof}
For the first part, it suffices to show that the statement holds for $k = k_0-1$. Note that $T^{(k_0-1)}_\calA = T^{(k_0)}_{\calA}(- \vvirg -, 1_\calA)$ and $T^{(k_0-1)}_\calB = T^{(k_0)}_{\calB}(- \vvirg -, 1_\calB)$. 

 By \cref{prop: degeneration in flattening image}, $T^{(k_0-1)}_\calB$ is a degeneration of the generic element of the linear space $E = \image ( (T^{(k_0)}_\calA)_{k_0} : \calA \to \calA \otimes \calA^* \ootimes \calA^*)$. By \cref{lem: dense orbit for associative algebras}, $\calU$ has a dense orbit in $E$, hence any sufficiently generic element of $E$ is in the orbit of $T^{(k_0)}_\calA(- \vvirg - , 1_\calA) = T^{(k_0-1)}_\calA$. This concludes the proof of the first part.

 To prove the last claim, observe that, from the first part, if $T^{(k_0)}_\calA$ degenerates to $T^{(k_0)}_\calB$, then $T^{(2)}_\calA$ degenerates to $T^{(2)}_\calB$. Since $\dim \calA = \dim \calB$, by \cite[Thm. 3.2]{BlaLys}, we obtain that the degeneration of $T^{(2)}_\calA$ to $T^{(2)}_\calB$ can be achieved \emph{symmetrically}, in the following sense. There exists $g_\eps : \calA \to \calB$ such that 
 \[
T^{(2)}_\calB = \lim_{\eps \to 0} (g_\eps \otimes g_\eps^* \otimes g_\eps^*) T^{(2)}_\calA,
 \]
 where $g_\eps^* : \calA^* \to \calB^*$ is the inverse of the transpose of $g_\eps$. Explicitly, this means that, for every $b_1,b_2 \in \calB$, we have 
 \[
b_1 \cdot_B b_2 = \lim_{\eps \to 0} g_\eps( g_\eps^{-1}(b_1) \cdot _A g_\eps^{-1}(b_2)),
 \] 
 where $\cdot_A$ and $\cdot_B$ denote the multiplication in $\calA$ and $\calB$, respectively.
 We use induction on $k$ to show that $g_\eps \otimes (g_\eps^*)^{\otimes k}$ defines a degeneration of $T^{(k)}_\calA$ to $T^{(k)}_\calB$. The base of the induction for $k = 2$ is given by the argument above. For $k \geq 3$, and for every $b_1 \vvirg b_k \in \calB$, we have 
 \begin{align*}
\lim_{\eps \to 0} &g_\eps( (g_\eps^{-1}(b_1)) \cdots  (g_\eps^{-1}(b_k))) = \\ 
&\lim_{\eps \to 0} g_\eps \bigl(g^{-1}_\eps (g_\eps( g_\eps^{-1}(b_1) \cdots  g_\eps^{-1}(b_{k-1}))) \cdot (g^{-1}_\eps (b_k) ) \bigr) = \\
&\lim_{\eps \to 0} g_\eps ( g_\eps^{-1}( \lim_{\delta \to 0} [(g_\delta( g_\delta^{-1}(b_1) \cdots  g_\delta^{-1}(b_{k-1})))] ) \cdot(g^{-1}_\eps (b_k) ) )= \\
& \lim_{\eps \to 0} g_\eps ( g_\eps^{-1} (b_1 \cdots b_{k-1}) \cdot g_\eps^{-1}(b_k)) = b_1 \cdots b_k;
 \end{align*}
 the second equality holds because in the third line the arguments of both limits are polynomial functions in $\eps$ and $\delta$, and the limit reduces to the degree $0$ term in such a function, which coincides with the degree $0$ term in the second line; the third equality uses the induction hypothesis; the final equality holds because of the statement for $k=2$. This concludes the proof.
\end{proof}

\subsection{Several notions of rank and subrank}\label{subsec: subrank}
The \emph{unit tensor} of order $k$ and rank $r$ is the tensor 
\[
\bfu_k(r) = \sum_{i=1}^r e_i^{(1)} \ootimes e_i^{(k)} \in \bbC^r \ootimes \bbC^r
\]
where, for $p = 1 \vvirg k$, $\{ e_i^{(p)} : i \in [r]\}$ is a basis of the $p$-th copy of $\bbC^r$. Unit tensors defined with respect to different bases are isomorphic. 

The unit tensor is used to define several invariants of tensors. Let $T \in V_1 \ootimes V_k$ be a tensor. Then
\begin{itemize}
\item the \emph{rank} of $T$, denoted $\rmR(T)$, is the smallest $r$ such that $\bfu_k(r)$ restricts to~$T$;
\item the \emph{border rank} of $T$,  denoted $\uR(T)$, is the smallest $r$ such that $\bfu_k(r)$ degenerates to~$T$;
\item the \emph{subrank} of $T$, denoted $\rmQ(T)$, is the largest $q$ such that $T$ restricts to $\bfu_k(q)$;
\item the \emph{border subrank} of $T$, denoted $\uQ(T)$, is the largest $q$ such that $T$ degenerates to~$\bfu_k(q)$.
\end{itemize}
We immediately have 
\[
\rmQ(T) \leq \uQ(T) \leq \uR(T) \leq \rmR(T),
\]
and when $k = 2$, they all coincide with the matrix rank. 

The unit tensor $\bfu_3(r)$ is isomorphic to the structure tensor of the algebra $\bbC^r$ with coordinate-wise multiplication and $\bfu_{k+1}(r)$ is isomorphic to the structure tensor of the $k$-fold product in this algebra. In particular, \cref{thm: degeneration between higher order algebras} applies to the setting of rank and border rank, yielding the following result; see also \cite[Fact 4]{BlaMayShr23}.
\begin{proposition}\label{prop: R qnd Q as k grows}
Let $\calA$ be an associative unital algebra. Then, for every $k$,
\begin{itemize}
\item $\uR(T^{(k)}_{\calA}) \leq  \uR(T^{(k+1)}_{\calA})$,
\item $\uQ(T^{(k)}_{\calA}) \geq  \uQ(T^{(k+1)}_{\calA})$.
\end{itemize}
\end{proposition}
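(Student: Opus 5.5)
The plan is to deduce both inequalities from \cref{thm: degeneration between higher order algebras}, applied in its first part with $k_0 = k+1$, and to use the identification of unit tensors with higher order structure tensors of the diagonal algebra $\bbC^r$. Let $\calD_r = \bbC^r$ with coordinatewise multiplication. It is unital and associative, and $T^{(k)}_{\calD_r} \cong \bfu_{k+1}(r)$ for every $k$. The only care needed is that the definitions of $\uR$ and $\uQ$ for $T^{(k)}_\calA$ involve order $k+1$ unit tensors, and these are exactly $T^{(k)}_{\calD_r}$. The case $k=1$ is checked directly: $T^{(1)}_\calA$ is the identity matrix of size $\dim\calA$, so $\uR = \uQ = \dim \calA$, which dominates (respectively is dominated by) the values at $k=2$. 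Hence it suffices to treat $k \geq 2$, where $k_0 = k+1 \geq 2$ as the theorem requires.

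For border subrank, let $q = \uQ(T^{(k+1)}_\calA)$. Then $T^{(k+1)}_\calA$ degenerates to $\bfu_{k+2}(q) \cong T^{(k+1)}_{\calD_q}$. Both algebras are unital and associative, so \cref{thm: degeneration between higher order algebras} gives that $T^{(k)}_\calA$ degenerates to $T^{(k)}_{\calD_q} \cong \bfu_{k+1}(q)$. Therefore $\uQ(T^{(k)}_\calA) \geq q$.

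For border rank, let $r = \uR(T^{(k+1)}_\calA)$. Then $T^{(k+1)}_{\calD_r} \cong \bfu_{k+2}(r)$ degenerates to $T^{(k+1)}_\calA$, and the same theorem with the roles of the algebras exchanged gives that $\bfu_{k+1}(r)$ degenerates to $T^{(k)}_\calA$. Therefore $\uR(T^{(k)}_\calA) \leq r$.

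The one point needing care is that these degenerations take place between tensors in different ambient spaces, since $\dim \calA$ need not equal $q$ or $r$. The paper's convention re-embeds both tensors into a common space. However, \cref{thm: degeneration between higher order algebras} is phrased for structure tensors of two algebras, and its proof uses $T^{(k_0)}_\calB(-,\dots,-,1_\calB)$ inside the flattening image of $T^{(k_0)}_\calA$. So I would check that this argument, via \cref{prop: degeneration in flattening image} and \cref{lem: dense orbit for associative algebras}, only requires the flattening image $E_\calB$ of $T^{(k_0)}_\calB$ to be contained in a degeneration of $E_\calA$, as supplied by \cref{thm: degenerations via grassmannians}(c). This containment is insensitive to padding the smaller space with zeros. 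With this check the result follows; I expect it to be the only mildly delicate step, and the rest is formal.
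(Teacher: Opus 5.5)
Your proposal is correct and is essentially the paper's proof: apply \cref{thm: degeneration between higher order algebras} with the diagonal algebra $\bbC^r$ (resp.\ $\bbC^q$) in the role of $\calA$ (resp.\ $\calB$), using $T^{(k)}_{\bbC^r}\cong\bfu_{k+1}(r)$; your check that re-embedding into a common ambient space is harmless verifies a point the paper leaves implicit. The separate treatment of $k=1$ is unnecessary, since $k_0=k+1=2$ is already allowed by the theorem; note also that your sentence there reads with the directions swapped, since what is needed is $\dim\calA\le\uR(T^{(2)}_\calA)$ and $\dim\calA\ge\uQ(T^{(2)}_\calA)$.
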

\begin{proof}
    For the first statement, let $r = \uR(T^{(k+1)}_{\calA})$. Then \cref{thm: degeneration between higher order algebras} with $\bbC^r$ playing the role of $\calA$ and $\calA$ playing the role of $\calB$ guarantees $\bfu_{k+1}(r) = T^{(k)}_{\bbC^r}$ degenerates to $T_\calA^{(k)}$, proving the first statement. The second statement is proved similarly.
\end{proof}
A consequence of \cref{prop: R qnd Q as k grows} is that, for every unital associative algebra $\calA$, the sequence of positive integers $q_k = \uQ(T^{(k)}_{\calA})$ stabilizes to a certain value $q_\infty$, only depending on the algebra $\calA$. We do not know exactly how this invariant relates to other invariants of the algebra $\calA$. If $\calA$ is commutative, the number of summands in its decomposition as sum of local algebras is a lower bound for $q_\infty$; moreover, \cref{prop: socle prop} will guarantee that if $\calA$ is local, then $q_\infty = 1$. The results of \cref{sec: MaMu} and \cref{sec: TMaMu} will show that $q_\infty = n$ for the algebras of $n \times n$ matrices and that of $n \times n$ upper triangular matrices. Another invariant of $\calA$ related to \cref{prop: R qnd Q as k grows} concerns \emph{how fast} $q_k$ stabilizes to $q_\infty$, that is the smallest $k_0$ such that $q_{k_0} = q_\infty$: we compute this value for the commutative algebras discussed in \cref{sec: commutative} and for the algebra of upper triangular matrices in \cref{sec: TMaMu}; we do not know its value for the algebra of $n \times n$ matrices.

The \emph{geometric rank} of $T\in V_1 \ootimes V_k$, denoted $\GR(T)$, is defined to be 
\[
\GR(T) = \codim \{ (\alpha_1 \vvirg \alpha_{k-1}) \in V_1 ^* \ttimes V_{k-1}^* : T(\alpha_1 \vvirg \alpha_{k-1}, \alpha_k) = 0 \text{ for all } \alpha_k \in V_k^*\}.
\]
Geometric rank was introduced in \cite{KopMosZui:GeomRankSubrankMaMu}. The subvariety 
\begin{equation}\label{eqn: geometric rank variety}
\calZ(T) = \{ (\alpha_1 \vvirg \alpha_{k-1}) \in V_1 ^* \ttimes V_{k-1}^* : T(\alpha_1 \vvirg \alpha_{k-1}, \alpha_k) = 0 \text{ for all } \alpha_k \in V_k^*\}    
\end{equation}
is the zero set of the system of multilinear equations $\image (T_k: V_k^* \to V_1 \ootimes V_{k-1})$, regarded as polynomials on (the cone over) the Segre variety $\bbP V_1^* \ttimes \bbP V_{k-1}^*$. It was proved in \cite[Thm.~3.2]{KopMosZui:GeomRankSubrankMaMu} that $\GR(T)$ can equivalently be defined by considering the system of equations given by any flattening $T_i : V_i^* \to \bigotimes_{j \neq i} V_j$; the varieties analogous to \eqref{eqn: geometric rank variety} all have the same codimension.

The theorem of semicontinuity of dimension of the fiber guarantees that geometric rank is semicontinuous under degeneration: if $T$ degenerates to $T'$, then $\GR(T) \geq \GR(T')$. Moreover, it is easy to see that $\GR(\bfu_k(r)) = r$. Therefore, geometric rank can be used to bound from above border subrank: 
\begin{equation}\label{eqn: Q bounded by Grank}
\uQ(T) \leq \GR(T).
\end{equation}
In the case of structure tensors of algebras, consider in \eqref{eqn: geometric rank variety} the variety defined with respect to the factor $\calA$; this is 
\[
\calZ_k(\calA) := \calZ(T^{(k)}_\calA) = \{ (a_1 \vvirg a_k) \in \calA \ttimes \calA : a_1 \cdots a_k = 0\}.
\]
 We obtain the following result, analogous to \cref{prop: R qnd Q as k grows}, but without the hypothesis that $\calA$ is unital or associative.
\begin{proposition}\label{prop: geoRank propagation}
Let $\calA$ be an algebra. Then for every $k$,
\[
\GR(T^{(k)}_{\calA}) \geq \GR(T^{(k+1)}_{\calA}).
\]
\end{proposition}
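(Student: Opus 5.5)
The plan is to compare the varieties $\calZ_k(\calA)$ and $\calZ_{k+1}(\calA)$ directly, using the recursive, left-to-right definition \eqref{eq: k-fold product} of the $k$-fold product. Set $n = \dim \calA$. By definition, $\GR(T^{(k)}_\calA) = kn - \dim \calZ_k(\calA)$, where $\calZ_k(\calA) = \{(a_1 \vvirg a_k) \in \calA^{\times k} : T^{(k)}_\calA(a_1 \vvirg a_k) = 0\}$. Here $\dim$ is the maximum dimension of an irreducible component, so that the codimension is taken in the sense used in the definition of geometric rank. Hence it suffices to show $\dim \calZ_{k+1}(\calA) \geq \dim \calZ_k(\calA) + n$.

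The key step is the inclusion
\[
\calZ_k(\calA) \times \calA \subseteq \calZ_{k+1}(\calA).
\]
By \eqref{eq: k-fold product}, and unwinding the left-to-right parenthesization, $T^{(k+1)}_\calA(a_1 \vvirg a_{k+1}) = T^{(2)}_\calA\bigl(T^{(k)}_\calA(a_1 \vvirg a_k), a_{k+1}\bigr)$. (For $k=1$ this reads $T^{(2)}_\calA(a_1,a_2)$, consistent with $T^{(1)}_\calA = \id_\calA$.) If $T^{(k)}_\calA(a_1 \vvirg a_k) = 0$, then bilinearity of $T^{(2)}_\calA$ gives $T^{(k+1)}_\calA(a_1 \vvirg a_{k+1}) = 0$ for every $a_{k+1} \in \calA$. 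Only bilinearity of the multiplication is used here, not associativity or a unit, which is why the statement holds for arbitrary algebras.

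Since $\calZ_k(\calA)\times\calA$ is a closed subvariety of $\calZ_{k+1}(\calA)$ of dimension $\dim \calZ_k(\calA) + n$, we obtain
\[
\GR(T^{(k+1)}_\calA) = (k+1)n - \dim \calZ_{k+1}(\calA) \leq (k+1)n - \dim\calZ_k(\calA) - n = \GR(T^{(k)}_\calA).
\]
There is no real obstacle. The only point requiring care is the identification of the recursive definition \eqref{eq: k-fold product} with $T^{(2)}_\calA(T^{(k)}_\calA(\cdot), \cdot)$: in a nonassociative algebra this holds precisely because of the chosen left-to-right parenthesization. This follows by a one-line induction on $k$ from \eqref{eq: k-fold product}.
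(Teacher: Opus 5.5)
Your proposal is correct and follows the paper's proof: both establish the inclusion $\calZ_k(\calA) \times \calA \subseteq \calZ_{k+1}(\calA)$ and compare codimensions. Your explicit check that the left-to-right parenthesization gives $T^{(k+1)}_\calA(a_1 \vvirg a_{k+1}) = T^{(2)}_\calA\bigl(T^{(k)}_\calA(a_1 \vvirg a_k), a_{k+1}\bigr)$, using only bilinearity, is exactly the step the paper leaves as ``clearly''.
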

\begin{proof}
Clearly $\calZ_k(\calA) \times \calA \subseteq \calZ_{k+1}(\calA)$. We obtain 
\[
\codim_{\calA^{\times k}} \calZ_k(\calA) = \codim_{\calA^{\times k+1}} (\calZ_k(\calA) \times \calA) \geq \codim_{\calA^{\times k+1}} \calZ_{k+1}(\calA),
\]
The statement follows.
\end{proof}

Another notion of rank that will be useful in the following is the \emph{$G$-stable rank}, denoted $\rmR^G(T)$, which was introduced in \cite{Derk:GStableRank}. Its original definition is given in terms of the action of one-parameter subgroups of $\SL(V_1) \ttimes \SL(V_k)$ on the tensor product; it turns out that $\rmR^G(T)$ is a measure of the instability of $T$ under such action. We give here an equivalent definition, in terms of the solution of a linear program, as explained in \cite[Section 4B]{Derk:GStableRank}. Let $n_j = \dim V_j$ and let $\frakb = ( \{v_i^{(j)}\}_{i = 1 \vvirg n_j} : j = 1 \vvirg k)$ be a choice of bases for the vector spaces $V_1 \vvirg V_k$, so that $T$ has a unique expression in coordinates $T = \sum_{i_1 \vvirg i_k} T_{i_1 \vvirg i_k} v_{i_1}^{(1)} \ootimes v_{i_k}^{(k)}$; the \emph{support} of $T$ with respect to $\frakb$ is the set 
\[
\supp_\frakb (T) = \{ (i_1 \vvirg i_k): T_{i_1 \vvirg i_k} \neq 0\} \subseteq [n_1] \ttimes [n_k].
\]
Consider the value  
\[
\rmR^G_\frakb (T) = \min \left\{ \sum_{j=1}^k \sum_{i=1}^{n_j} x_{j,i} : x_{j,i} \geq 0, \sum_{j=1}^k x_{j,i_j} \geq 1 \text{ for all } (i_1 \vvirg i_k) \in \supp_\frakb(T) \right\};
\]
$\rmR^G_\frakb (T)$ is called T-stable rank in \cite[Sec.~4]{Derk:GStableRank}, where the `T' indicates the action of the torus in $\SL(V_1) \ttimes \SL(V_k)$ determined by the choice of bases $\frakb$. Using \cite[Cor. 4.2]{Derk:GStableRank}, one can define the $G$-stable rank of $T$ as
\[
\rmR^G (T) = \inf \{ \rmR^G_\frakb (T) : \frakb \text{ choice of bases of } V_1 \vvirg V_k\}.
\]
An important fact for our purposes is that for any tensor $T$, the border subrank is bounded from above by the $G$-stable rank; moreover, by definition, the $G$-stable rank is bounded above by the value $\rmR^G_\frakb (T)$ for any fixed $\frakb$. In summary, for every $T \in V_1 \ootimes V_k$, and every choice of bases $\frakb$, we have 
\begin{equation}\label{eqn: Q bounded by Gstable}
    \uQ(T) \leq \rmR^G(T) \leq \rmR^G_\frakb (T).
\end{equation}

\section{Commutative algebras}\label{sec: commutative}

In this section, $\calA$ is a unital commutative associative algebra, that is, an algebra that can be realized as a quotient of a polynomial ring by a suitable $0$-dimensional ideal. The structure tensors $T^{(2)}_\calA$ of such algebras have an algebraic characterization, see e.g. \cite[Sec.~2.1]{JLP}: $T \in \bbC^n \otimes \bbC^n \otimes \bbC^n$ is isomorphic to $T^{(2)}_\calA$ for some unital commutative algebra $\calA$ of dimension $n$ if and only if $T$ is \emph{binding} and $T$ satisfies Strassen's commutativity equations; see also \cite{LanMic}.

\subsection{Local algebras}
By the Chinese Remainder Theorem, every unital commutative algebra decomposes as 
\[
\calA \simeq \calA_1 \oplus \cdots \oplus \calA_m
\]
where $\calA_i$ are local algebras. In particular, the structure tensor decomposes as a direct sum of structure tensors of local algebras. As a consequence, border rank and border subrank of the structure tensors are, respectively, bounded from above and from below by the border ranks and border subranks of the structure tensors of the local summands:
\[
\uQ(T^{(k)}_\calA) \geq \textstyle \sum_{i=1}^m \uQ(T^{(k)}_{\calA_i}), \qquad  \uR(T^{(k)}_\calA) \leq \textstyle \sum_{i=1}^m \uR(T^{(k)}_{\calA_i}).
\]
To the extent of our knowledge, there are no known examples where these inequalities are strict. 

Understanding border rank of structure tensors of commutative algebras is considered a hard problem: the results of \cite{BlaLys} guarantee that it is at least as hard as deciding smoothability of $0$-dimensional schemes. This is considered a difficult problem, with connections to deep questions in algebraic geometry and the study of moduli spaces and singularities; we refer to \cite{Jeli20} for more details. An algebra of dimension $n$ is called \emph{smoothable} if $\uR(T^{(2)}_\calA) = n$ and in this case one can prove that $\uR(T^{(k)}_\calA) = n$ for every $k$. 

In this section, we record results on the structure tensors of some specific local commutative algebras. First, we provide a general upper bound on the border subrank, based on properties of the maximal ideal of the local algebra. Let $\calA$ be a local commutative algebra and let $\frakm$ be its (unique) maximal ideal: then, as a vector space $\calA = \langle 1 \rangle \oplus \frakm$ and $\frakm$ is \emph{nilpotent}, in the sense that $\frakm^s = 0$ for $s$ large enough. The \emph{socle degree} of $\calA$ is the largest $s$ for which $\frakm^s$ is nonzero; clearly $s \leq \dim \calA - 1$. The following result shows that for large $k$, the border subrank of $T^{(k)}_\calA$ is as small as possible. 
\begin{proposition}\label{prop: socle prop}
Let $\calA$ be a local commutative algebra with socle degree $s$ and let $r = \dim \frakm^s$.
 \begin{itemize}
\item if $k \geq 2s+1$ or if $(k,r) = (2s,1)$, then $\uQ(T^{(k)}_\calA) = 1$;
\item if $k = 2s$, then $\uQ(T^{(k)}_\calA) \leq 2$.
\end{itemize}
\end{proposition}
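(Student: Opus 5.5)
The plan is to bound the border subrank from above by the $G$-stable rank, using \eqref{eqn: Q bounded by Gstable}: I will exhibit a basis $\frakb$ and a feasible point of the linear program defining $\rmR^G_\frakb(T^{(k)}_\calA)$ whose value is $<2$, or $\leq 2$ in the case $k=2s$. Since border subrank is an integer, a value $<2$ forces $\uQ\leq 1$. The matching lower bound $\uQ(T^{(k)}_\calA)\geq 1$ holds because $T^{(k)}_\calA\neq 0$.

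\emph{Choice of basis.} Consider the filtration $\calA\supseteq\frakm\supseteq\frakm^2\supseteq\cdots\supseteq\frakm^s\supseteq 0$. I choose a basis of $\calA$ adapted to it. It consists of $1_\calA$, then a complement basis of $\frakm^i$ modulo $\frakm^{i+1}$ for $i=1\vvirg s$. The last block is a basis of $\frakm^s$, which has $r$ elements. Define the order of a basis vector as the $i$ with $v\in\frakm^i\setminus\frakm^{i+1}$, so $1_\calA$ is the unique vector of order $0$. I use this basis on the output factor $\calA$ and the dual basis on each input factor $\calA^*$.

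\emph{Support.} Since $\frakm^{d_1}\cdots\frakm^{d_k}\subseteq\frakm^{\sum d_\ell}$, a support element $(i_1\vvirg i_k;j)$ must satisfy $\sum_\ell \mathrm{ord}(i_\ell)\leq \mathrm{ord}(j)\leq s$. Every nonunit input has order $\geq 1$, so at least $k-s$ of the inputs are equal to $1_\calA$. Moreover, if exactly $k-s$ inputs are $1_\calA$, then the remaining $s$ inputs have order exactly $1$, the output has order $s$, and hence the output basis vector lies in $\frakm^s$.

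\emph{Case $k\geq 2s$.} Put weight $x=1/(k-s)$ on the coordinate $1_\calA$ in each of the $k$ input factors, and weight $0$ everywhere else. Every support element contains at least $k-s$ copies of $1_\calA$, so each constraint is satisfied. The total value is $k/(k-s)$.
\begin{itemize}
\item If $k\geq 2s+1$, then $k/(k-s)<2$, which gives $\uQ=1$.
\item If $k=2s$, the value equals $2$, which gives $\uQ\leq 2$.
\end{itemize}

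\emph{Case $(k,r)=(2s,1)$.} This is the only delicate step; it needs a refinement that also uses the output factor. Put weight $a$ on $1_\calA$ in each input factor, and weight $b$ on the output coordinate corresponding to the unique socle basis vector spanning $\frakm^s$. The support splits into two kinds of constraints.
\begin{itemize}
\item Elements with at least $s+1$ unit inputs require $(s+1)a\geq 1$.
\item Elements with exactly $s$ unit inputs necessarily have the socle vector as output, by the support analysis above, so they require $sa+b\geq1$.
\end{itemize}
Take $a=b=1/(s+1)$. Both constraints hold, and the total value is $2sa+rb=(2s+1)/(s+1)<2$. Hence $\uQ=1$.

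In general this refinement gives the value $(2s+r)/(s+1)$, which is $<2$ exactly when $r=1$. This explains the hypothesis $r=1$ in the statement.
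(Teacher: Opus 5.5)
Your proof is correct. It argues through a different tool than the paper, although the underlying mechanism is the same.

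\textbf{The paper's route.} The paper argues directly with instability. It takes a generic restriction of $T^{(k)}_\calA$ to $(\bbC^{q+1})^{\otimes(k+1)}$ and normalizes it so that $1^*\mapsto e_0$ on every input factor, and also $\frakm^s\mapsto\langle e_0\rangle$ on the output factor when $r=1$. It then exhibits an explicit one-parameter subgroup of $\SL_{q+1}^{\times(k+1)}$ that drives this restriction to $0$. The conclusion follows because the nullcone is closed and $\bfu_{k+1}(q+1)$ is not unstable.

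\textbf{Your route.} You work once and for all in a basis of $\calA$ adapted to the $\frakm$-adic filtration. You write down feasible points of Derksen's linear program and invoke $\uQ\le\rmR^G_\frakb$ from \eqref{eqn: Q bounded by Gstable} as a black box.

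\textbf{Why they are the same mechanism.} The inequality $\uQ\le\rmR^G_\frakb$ can itself be proved by pushing LP weights forward along a restriction and subtracting their average on each factor. Doing this with your weights recovers the paper's subgroups exactly:
\begin{itemize}
\item weights $(1,-1)$ on the input factors when $k\ge 2s+1$;
\item weights $(2,-1,-1)$ on the input factors when $k=2s$;
\item weights $(1,-1)$ on all $k+1$ factors when $(k,r)=(2s,1)$.
\end{itemize}
In the last case, your weight on the socle vector in the output factor is precisely what becomes the paper's normalization $X_0(\frakm^s)\subseteq\langle e_0\rangle$.

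\textbf{What each version buys.} Your packaging removes the bookkeeping about normalizing generic restrictions. It treats the bound for $k=2s$ uniformly in $r$, whereas the paper writes it out only for $r\ge 2$. It also makes the role of the hypothesis $r=1$ transparent through the value $(2s+r)/(s+1)$. As a by-product it records the explicit bound $\rmR^G(T^{(k)}_\calA)\le k/(k-s)$ for $k>s$. The paper's version is more self-contained: it relies only on the semistability of unit tensors, not on the $G$-stable rank inequality, which the paper quotes rather than proves.
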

\begin{proof}
We describe the strategy of the proof, which is implemented in slightly different ways depending on the values of $k,r$. Let $q$ be an integer. To prove an upper bound $\uQ(T^{(k)}_\calA) \leq q$, we prove that all restrictions of $T^{(k)}_\calA$ to the tensor space $(\bbC^{q+1})^{\otimes (k+1)}$ are unstable for the action of $\SL_{q+1}^{\times (k+1)}$, in the sense that the closure of the $(\SL_{q+1}^{\times (k+1)})$-orbit of such restrictions contains the tensor $0$. Instability is a closed condition. Since the unit tensor $\bfu_{k+1}(q+1)$ is not unstable (see, e.g., \cite[Cor. 4.9]{BurIke}), we obtain an obstruction to the degeneration of $T^{(k)}_\calA$ to $\bfu_{k+1}(q+1)$, hence the desired upper bound.

In the case $k \geq 2s+1$, we want to prove the upper bound $\uQ(T^{(k)}_{\calA}) \leq 1$. Let $T' = X_0 \ootimes X_k (T^{(k)}_{\calA})$ be a generic restriction of $T^{(k)}_{\calA}$ to $\bbC^2 \ootimes \bbC^2$. Let $e_0,e_1$ be a basis of $\bbC^2$; without loss of generality, we may assume $X_p(1^*) = e_0$ for all $p= 1 \vvirg k$. This implies that, writing $T'$ as a linear combination of the basis elements $e_{i_0} \ootimes e_{i_k}$, every term in $T'$ has at most $s$ indices among $i_1 \vvirg i_k$ equal to $1$: this is because terms with nonzero coefficient $e_1$ can only appear from the restriction of elements in $\langle 1 \rangle^\perp \simeq \frakm^* \subseteq \calA^*$, and by definition $\frakm^{s+1} = 0$. Consider the one-parameter subgroup $g_\eps \subseteq \SL_2$ given by 
\[
g_\eps (e_0) = \eps e_0, \qquad g_\eps(e_1) = \eps^{-1} e_1.
\]
Then $\lim_{\eps \to 0} (\id_{\bbC^2} \otimes g_\eps^{\otimes k})T' = 0$. Indeed, by construction, every summand in $T'$ has at most $s$ factors $e_1$, so at least $k-s$ factors $e_0$. Since $k \geq 2s+1$, we have $k-s > s$, and each term of $T'$ is rescaled by a positive power of $\eps$; in particular it converges to $0$ as $\eps \to 0$. This guarantees that $T'$ is unstable and proves the desired upper bound.

The proof in the case $k = 2s$ with $r \geq 2$ is similar. Let $T' = X_0 \ootimes X_k (T^{(k)}_{\calA})$ be a generic restriction to $\bbC^3 \ootimes \bbC^3$. Let $e_0,e_1,e_2$ be a basis of $\bbC^3$; as before, without loss of generality, we may assume $X_p(1^*) = e_0$ for all $p= 1 \vvirg k$. Consider the one-parameter subgroups $g_\eps \subseteq \SL_3$ given by 
\[
g_\eps (e_0) = \eps^2 e_0, \qquad g_\eps(e_i) = \eps^{-1} e_i \text{ if $i=1,2$}.
\]
Then $\lim_{\eps \to 0} (\id_{\bbC^3} \otimes g_\eps^{\otimes k})T' = 0$. Indeed, every summand in $T'$ has at most $s$ factors in $\langle e_1,e_2\rangle$, so at least $k-s$ factors $e_0$. Since $k =2s$, we have $k-s = s$ and, since $r \geq 2$, each term is rescaled by a power of $\eps$ at least as large as $2s-s = s > 0$; in particular it converges to $0$ as $\eps \to 0$. This guarantees that $T'$ is unstable and proves the desired upper bound.

In the case $(k,r) = (2s,1)$, we proceed in a similar way.  Let $T' = X_0 \ootimes X_k (T^{(k)}_{\calA})$ be a generic restriction to $\bbC^2 \ootimes \bbC^2$. Let $e_0,e_1$ be a basis of $\bbC^2$; as before, without loss of generality, we may assume $X_p(1^*) = e_0$ for all $p= 1 \vvirg k$. We may further assume $X_0$ maps the $1$-dimensional space $\frakm^s$ to $\langle e_0 \rangle$. Consider the one-parameter subgroups $g_\eps \subseteq \SL_2$ given by 
\[
g_\eps (e_0) = \eps e_0, \qquad g_\eps(e_1) = \eps^{-1} e_1.
\]
We prove $\lim_{\eps \to 0} (g_\eps^{\otimes (k+1)})T' = 0$. Every summand in $T'$ has at most $s$ factors $e_1$ among the factors $1 \vvirg k$; moreover, the normalization on the $0$-th factor implies that if there are exactly $s$ factors $e_1$ in $1 \vvirg k$, then the $0$-th factor is $e_0$. We conclude, because terms with fewer than $s$ factors $e_1$ in $1 \vvirg k$ have at least $s+1$ factors $e_0$, therefore are rescaled by a positive power of $\eps$; terms with exactly $s$ factors $e_1$ in $\{1 \vvirg k\}$ have $s+1$ factors $e_0$, considering also the $0$-th factor. This guarantees the limit is $0$, and $T'$ is unstable.  
\end{proof}

The range of \cref{prop: socle prop} is sharp in some restricted cases: for instance, \cref{prop: null algebra} below guarantees that for $s=1$, $r \geq 2$, the upper bounds $\uQ(T^{(2)}_\calA) \leq 2$ cannot be improved. We do not know if the range $k \geq 2s+1$ in which $\uQ(T^{(k)}_\calA) =1$ can be improved, for instance to a range of the form $k \geq s+p$ for some constant $p$. We do not expect however that it should be possible to improve the range only relying on an instability argument: for instance, \cref{rmk: border subrank TRd3 via invariant} shows a structure tensor $T^{(k)}_{\calA}$ whose generic restriction to $\bbC^2 \ootimes \bbC^2$ is semistable, but there is no degeneration $T^{(k)}_{\calA}$ to $\bfu_{k+1}(2)$.

In the following \cref{sec: null algebra}, \cref{sec: truncated poly}, and \cref{sec: higher CW}, we determine upper and lower bounds on the border subrank, as well as the exact value of the geometric rank, for three classes of local algebras: null algebras $\calN_n = \bbC[x_1 \vvirg x_n]/(x_1 \vvirg x_n)^2$, truncated polynomial algebras $\calR_d = \bbC[x]/(x^d)$ and apolar algebras of homogeneous quadrics $\calQ_n = \bbC[x_1 \vvirg x_n]/ I$ where $I = (x_ix_j : i \neq j) + (x_i^2 - x_1^2: i = 2 \vvirg n)$. It is known that these three algebras are smoothable, so the border rank of the structure tensors coincides with the dimension of the algebras.

\subsection{Null algebras}\label{sec: null algebra}

Let $\mathcal{N}_n = \bbC[x_1,\dots,x_n]/(x_1,\dots,x_n)^2$ be the trivial unital algebra of dimension $n+1$. Fix the basis $1,x_1 \vvirg x_n$ of $\calN_{n}$ and let $1^*,x_1^*,\dots,x_n^*$ be its dual basis in $\calN_{n}^*$. Let $\frakm = \langle x_1 \vvirg x_n\rangle$ be the maximal ideal of $\calN_n$. The structure tensor of $k$-fold multiplication is 
    \begin{align*}
        T^{(k)}_{\calN_n} & = 1 \otimes (1^*)^{\otimes k} + \sum_{j = 1}^n [x_j \otimes (x_j^* \otimes 1^* \ootimes 1^* + \cdots + 1^* \ootimes 1^* \otimes x_j^*) ] 
    \end{align*}
where within each bracket one has the sum of the permutations of the tensor product of $k-1$ copies of $1^*$ and one copy of $x_j^*$.

It was shown in \cite[Lemma 3.4]{GesZuiNext}, following \cite[Lemma 3.5]{BlaLys}, that any binding tensor in $\bbC^{n+1} \otimes \bbC^{n+1} \otimes \bbC^{n+1}$ degenerates to $T^{(2)}_{\calN_n}$ or one of its permutations. Similarly, for every algebra $\calA$ of dimension $n+1$, the $k$-fold product tensor $T^{(k)}_\calA$ degenerates to $T^{(k)}_{\calN_n}$. So the structure tensors of $\calN_n$ are, in a way, minimal among all structure tensors of algebras of the same dimension. We determine the geometric rank and the border subrank of these tensors.

\begin{proposition}\label{prop: null algebra}
For every $k$ and every $n$, we have 
\begin{itemize}
\item $\GR(T^{(k)}_{\calN_n}) = 2$,
\item $\rmQ(T^{(k)}_{\calN_n}) = \uQ(T^{(k)}_{\calN_n}) = 2$ if $k = 2, n \geq 2$ and $\rmQ(T^{(k)}_{\calN_n}) = \uQ(T^{(k)}_{\calN_n}) = 1$ if $k \geq 3$.
\end{itemize}
\end{proposition}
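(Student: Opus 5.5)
The plan has three parts: compute the geometric rank exactly, use the socle bound of \cref{prop: socle prop} for $k \geq 3$, and exhibit an explicit restriction for $k=2$.

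\textbf{Geometric rank.} We compute $\GR$ through the variety $\calZ_k(\calN_n) = \{(a_1 \vvirg a_k) : a_1 \cdots a_k = 0\}$. Write $a_p = \lambda_p + m_p$ with $\lambda_p \in \bbC$ and $m_p \in \frakm$. Since $\frakm^2 = 0$, the product is
\[
a_1 \cdots a_k = \textstyle \prod_p \lambda_p + \sum_p \bigl(\prod_{q \neq p} \lambda_q\bigr) m_p .
\]
We split by the number of vanishing $\lambda_p$.
\begin{itemize}
\item If all $\lambda_p \neq 0$, the product is nonzero.
\item If exactly one $\lambda_p = 0$, the product vanishes exactly when $m_p = 0$. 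This imposes $1 + n$ conditions.
\item If at least two $\lambda_p$ vanish, the product is identically zero.
\end{itemize}
Hence the irreducible components of $\calZ_k(\calN_n)$ are $\{\lambda_p = \lambda_q = 0\}$, of codimension $2$, and $\{\lambda_p = 0, m_p = 0\}$, of codimension $n+1 \geq 2$. The latter are only present when $k \geq 2$ fails to give two other vanishing coordinates, but in any case we are taking a union. So $\GR = 2$ whenever $k \geq 2$ and $n \geq 1$. For $k = 1$ the tensor is the identity and $\GR = n+1$; I would either note that the statement implicitly assumes $k \geq 2$, or handle $k=1$ separately.

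\textbf{Upper bounds on border subrank.} By \eqref{eqn: Q bounded by Grank}, $\uQ \leq 2$ for every $k$. For $k \geq 3$, note that $\calN_n$ is local with socle degree $s = 1$, so $k \geq 2s + 1$. Then \cref{prop: socle prop} gives $\uQ(T^{(k)}_{\calN_n}) = 1$. Since $T \neq 0$, $\rmQ \geq 1$, and therefore $\rmQ = \uQ = 1$.

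\textbf{Lower bound for $k=2$, $n \geq 2$.} We need an explicit restriction to $\bfu_3(2)$. The tensor contains
\[
1 \otimes 1^* \otimes 1^* + x_1 \otimes (x_1^* \otimes 1^* + 1^* \otimes x_1^*) + x_2 \otimes (x_2^* \otimes 1^* + 1^* \otimes x_2^*).
\]
The idea is to pick linear maps such that the first factor sees $1$ and $x_1$, the second factor sees $1^*$ and $x_2^*$, and the third sees $1^*$ and $x_1^*$ in a balanced way, so that only two "diagonal" terms survive. Concretely, I would map in the first factor $1 \mapsto 0$, $x_1 \mapsto e_1$, $x_2 \mapsto e_2$, and the rest to $0$. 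On the second and third factors, the functionals $1^*, x_1^*, x_2^*$ should be sent to vectors so that the images of $x_1^* \otimes 1^* + 1^* \otimes x_1^*$ and $x_2^* \otimes 1^* + 1^* \otimes x_2^*$ become $e_1 \otimes e_1$ and $e_2 \otimes e_2$. Taking $1^* \mapsto (e_1 + e_2)/2$ and $x_j^* \mapsto e_j - e_{3-j}$ on both factors may not work directly, so the cleaner route is to find rank-one-friendly substitutions directly.

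Alternatively, and this is the step I expect to need care, we can use the known fact that $T^{(2)}_{\calN_2}$ is the $3 \times 3 \times 3$ tensor $a_0 b_0 c_0 + a_1(b_1 c_0 + b_0 c_1) + a_2(b_2 c_0 + b_0 c_2)$. Projecting onto the two slices $a_1, a_2$ gives the pencil of matrices $b_1 c_0 + b_0 c_1$ and $b_2 c_0 + b_0 c_2$. The linear combinations $b_1 c_0 + b_0 c_1 \pm (b_2 c_0 + b_0 c_2)$ are symmetric rank-$2$ forms. I would show the $2$-dimensional pencil restricts, in suitable $2$-dimensional quotients of the $b$- and $c$-spaces, to $\langle E_{11}, E_{22} \rangle$, for instance by sending $b_0 \mapsto e_1 + e_2$, $b_1 \mapsto e_1$, $b_2 \mapsto -e_2$ (and similarly for $c$) and then adjusting the $a$-side by a change of basis. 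Verifying the explicit $2 \times 2$ computation is the only remaining work.
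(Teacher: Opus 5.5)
Your computation of $\GR(T^{(k)}_{\calN_n})=2$ via $a_p=\lambda_p+m_p$ is correct and matches the paper, and so is your use of \cref{prop: socle prop} for $k\ge 3$. Your caveat about $k=1$ is fine, since the statement tacitly assumes $k\ge 2$.

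The gap is the lower bound $\rmQ(T^{(2)}_{\calN_n})\ge 2$ for $n\ge 2$. Neither of your explicit substitutions works, and you leave the check as ``the only remaining work''. Take the second one: $b_0\mapsto e_1+e_2$, $b_1\mapsto e_1$, $b_2\mapsto -e_2$, and the same on the $c$-side. The $a_1$- and $a_2$-slices become
\[
M_1=\begin{pmatrix}2&1\\1&0\end{pmatrix},\qquad M_2=\begin{pmatrix}0&-1\\-1&-2\end{pmatrix},\qquad \det(sM_1+tM_2)=-(s+t)^2,
\]
and the $a_0$-slice $(e_1+e_2)^{\otimes 2}$ equals $\tfrac12(M_1-M_2)$. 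So the flattening image of the restricted tensor lies in a pencil whose determinant has a double root. The pencil $\langle E_{11},E_{22}\rangle$ of $\bfu_3(2)$ has determinant $st$, with two distinct roots. No choice on the $a$-side can repair this; what you get is at best a $W$-type tensor.

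This is not an accident of the specific numbers. Suppose the second and third factors are restricted by the same map $Y$, and set $v=Y(1^*)$. Then every slice has the form $pv^{\top}+vp^{\top}$, and on that $2$-dimensional space $\det(pv^{\top}+vp^{\top})=-(p_1v_2-p_2v_1)^2$ is a perfect square. Since $T^{(2)}_{\calN_n}$ is symmetric in its last two factors, the ansatz ``and similarly for $c$'' can never produce $\bfu_3(2)$.

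You can close the gap in either of two ways:
\begin{itemize}
\item Break the symmetry explicitly. Take $X_0\colon 1\mapsto 0,\ x_1\mapsto e_1,\ x_2\mapsto e_2$; $X_1\colon 1^*\mapsto e_2,\ x_1^*\mapsto e_1$; $X_2\colon 1^*\mapsto e_1,\ x_2^*\mapsto e_2$, with all other basis vectors sent to $0$. This gives $(X_0\otimes X_1\otimes X_2)(T^{(2)}_{\calN_n})=e_1^{\otimes 3}+e_2^{\otimes 3}$.
\item Do as the paper does and invoke \cite[Thm.~1.10]{CGZGap}, using only that $T^{(2)}_{\calN_n}$ is concise (here $n\ge 2$).
\end{itemize}
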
    

\begin{proof}
The irreducible components of the variety $\calZ_k(\calN_n) \subseteq \calN_{n}^{\times k}$ are the permutations of the products of linear spaces 
\[
\frakm \times \frakm \times \calN_{n} \times \cdots \times \calN_{n} \quad  \text{ or } \quad \{ 0 \} \times \calN_{n} \times \cdots \times \calN_{n},
\]
which have codimension $2$ and $n+1$ respectively. Indeed, suppose $(a_1 \vvirg a_k)$ does not belong to a product of the first type. Then $a_j$ is invertible for at least $k-1$ indices $j$, because every element in $\calN_n \setminus \frakm$ is invertible. The condition $a_1 \cdots a_k = 0$ would then imply that the unique non-invertible element among $a_1 \vvirg a_k$ is zero, therefore $(a_1 \vvirg a_k)$ belongs to a component of the second type. This shows $\GR(T^{(k)}_{\calN_n}) = 2$, which by \eqref{eqn: Q bounded by Grank} yields the bound $\uQ(T^{(k)}_{\calN_n}) \leq 2$ for every $k$.

If $k=2$, the lower bound $\rmQ(T^{(2)}_{\calN_n}) \geq 2$ when $n \geq 2$ follows from \cite[Thm.~1.10]{CGZGap} since $T^{(2)}_{\calN_n}$ is concise. If $n = 1$, then $T^{(k)}_{\calN_1}$ is isomorphic to the $W$-tensor on $k+1$ factors and we have $\rmQ(T^{(k)}_{\calN_1}) = \uQ(T^{(k)}_{\calN_1}) = 1$. The upper bound $\uQ(T^{(k)}_{\calN_n}) \leq 1$ when $k \geq 3$ follows from \cref{prop: socle prop}.
\end{proof}

\subsection{Truncated polynomial multiplication}\label{sec: truncated poly} 

Let $\mathcal{R}_{d} = \bbC[x]/(x^d)$. Consider the basis $1,x \vvirg x^{d-1}$ of $\calR_{d}$ and let $1^*,x^* \vvirg {x^{d-1}}^ *$ be its dual basis in $\calR_d^*$. Let $\frakm = (x) = \langle x \vvirg x^{d-1} \rangle$ be the maximal ideal of $\calR_d$, and let $\frakm^p = \langle x^p \vvirg x^{d-1} \rangle$ denote its $p$-th power. In particular, the socle degree of $\calR_d$ is $d-1$. The structure tensor of $k$-fold multiplication is 
    \begin{align*}
        T^{(k)}_{\calR_d} & = \sum_{\substack{0 \leq p_i \leq d-1 \\ p_1 + \cdots + p_k \leq d-1}} x^{p_1 + \cdots + p_k} \otimes {x^{p_1}}^* \ootimes {x^{p_k}}^*.
    \end{align*}
After fixing isomorphisms $\calR_d^* \simeq \bbC^d$ mapping ${x^i}^*$ to the basis vector $e_{i}$ and $\calR_d \simeq \bbC^d$ mapping $x^i$ to $e_{d-1-i}$, the tensor $T^{(k)}_{\calR_d}$ can be identified with
\[
T_{d,k} = \sum_{i_0 + \cdots + i_k = d-1} e_{i_0} \otimes e_{i_1} \ootimes e_{i_k}.
\]
Up to the action of the stabilizer in $\GL_d \ttimes \GL_d$ of the vector $e_0^{\otimes (k+1)}$, this is the generic element in the image of the $d$-th fundamental form of the Segre variety $\bbP^{d-1} \ttimes \bbP^{d-1}$ at $e_0^{\otimes (k+1)}$; see \cite[Sec.~12.2]{IveyLan}. The orbit closure of $T_{d,k}$ is called the $d$-th \emph{bud} of the Segre variety in \cite[Sec.~4]{LanMicBRAlg}, and sometimes it is referred to as the $(d-1)$-th curvilinear osculating variety of the Segre variety. In \cite{ChrGesStWer}, the tensors $T_{d,k}$ were used to construct augmentations of graph tensors in the framework of tensor networks; \cite[Prop.~2 (supp. material)]{ChrGesStWer} gives an explicit degeneration of $\bfu_{k+1}(d)$ to $T^{(k)}_{\calR_d}$.

The geometric rank of $T^{(k)}_{\calR_d}$ is easy to compute:
\begin{lemma}\label{lem: geometric rank TRd}
    For every $k$ and every $d$, we have 
 \[
 \GR(T^{(k)}_{\calR_d}) = d.
 \]
\end{lemma}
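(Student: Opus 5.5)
We compute the codimension of $\calZ_k(\calR_d) = \{(a_1 \vvirg a_k) \in \calR_d^{\times k} : a_1 \cdots a_k = 0\}$ directly from its irreducible decomposition. Every element $a \in \calR_d$ can be written as $a = x^{p} u$ with $u$ a unit and $p = \nu(a) \in \{0 \vvirg d-1\}$ the $x$-adic valuation, or $a = 0$; set $\nu(0) = d$. Since units do not affect vanishing, $a_1 \cdots a_k = 0$ if and only if $\nu(a_1) + \cdots + \nu(a_k) \geq d$. Hence
\[
\calZ_k(\calR_d) = \bigcup_{\substack{p_1 + \cdots + p_k = d \\ 0 \leq p_i \leq d}} \frakm^{p_1} \ttimes \frakm^{p_k},
\]
where $\frakm^0 = \calR_d$ and $\frakm^d = 0$. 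Indeed, if the valuations sum to at least $d$, we can decrease some of them to obtain a tuple $(p_1 \vvirg p_k)$ with $p_i \leq \nu(a_i)$ and $\sum p_i = d$; conversely, every point of such a product satisfies the product condition.

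Each piece $\frakm^{p_1} \ttimes \frakm^{p_k}$ is a linear subspace. Since $\codim_{\calR_d} \frakm^{p} = p$ for $0 \leq p \leq d$, the piece has codimension $\sum_i p_i = d$ in $\calR_d^{\times k}$. So $\calZ_k(\calR_d)$ is a finite union of linear spaces all of codimension exactly $d$, and therefore $\GR(T^{(k)}_{\calR_d}) = \codim \calZ_k(\calR_d) = d$.

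There is no real obstacle. The only point requiring care is the equivalence between the product vanishing and the valuation inequality, which uses that $\calR_d$ is a local principal ideal ring: the product of units is a unit, and $x^{m} \neq 0$ exactly when $m < d$. As a sanity check, $\GR \leq d$ also follows from $\GR(T) \leq \dim V_0$, and the case $k=1$ gives $\{0\}$, of codimension $d$, consistent with \cref{prop: geoRank propagation}.
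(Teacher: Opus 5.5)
Your proposal is correct and follows essentially the same route as the paper: the paper also decomposes $\calZ_k(\calR_d)$ as the union of the linear spaces $\frakm^{p_1} \times \cdots \times \frakm^{p_k}$ with $p_1 + \cdots + p_k = d$. It uses the leading exponent (your $x$-adic valuation) to show that any tuple outside this union has nonzero product, and concludes because every such space has codimension $d$.
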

\begin{proof}
We prove that the irreducible components of the variety $\calZ_k(\calR_d) \subseteq \calR_{d}^{\times k}$ are the products of linear spaces
\[
\frakm^{p_1} \ttimes \frakm^{p_k} \quad \text{ with } p_1 + \cdots + p_k = d.
\]
To see this, suppose $(a_1 \vvirg a_k)$ does not belong to a product of this form. For every $j$, let $\ell_j$ be the \emph{leading exponent} of $a_j$, that is 
\[
\ell_j = \max \{ \ell \in \bbN : a_j \in \frakm^{\ell} \};
\]
equivalently $\ell_j$ is the smallest $\ell$ such that $a_j$ is linear combination of $x^{\ell} \vvirg x^{d-1}$. By the assumption, we have $\ell_1 + \cdots + \ell_k < d$; this is enough to guarantee $a_1 \cdots a_k \neq 0$ since its leading exponent is smaller than $d$. All linear spaces of the form above have codimension $d$, concluding the proof.
\end{proof}

The following result is the main theorem of this section, providing upper and lower bound on the border subrank of $T^{(k)}_{\calR_d}$.
\begin{theorem}\label{prop: lower bound QTRd}
Let $k,d \geq 1$ be integers. Then 
\[
\left\lfloor \frac{d-1}{k} \right\rfloor+1\leq\uQ(T^{(k)}_{\calR_d})\leq\left\lfloor\frac{2(d-1)}{k+1}\right\rfloor+1.
\]
\end{theorem}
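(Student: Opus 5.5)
The upper bound will come from an instability argument, as in the proof of \cref{prop: socle prop}; the lower bound will come from an explicit combinatorial degeneration. Throughout I work with the identification $T^{(k)}_{\calR_d} \simeq T_{d,k} = \sum_{i_0+\cdots+i_k=d-1} e_{i_0}\ootimes e_{i_k}$, whose support is $\Phi=\{(i_0\vvirg i_k) : \sum_j i_j = d-1\}$.

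\emph{Upper bound.} Set $q = \lfloor 2(d-1)/(k+1)\rfloor + 1$, so that $q > 2(d-1)/(k+1)$. I will show that every restriction $T' = (X_0\ootimes X_k)T_{d,k} \in (\bbC^{q+1})^{\otimes(k+1)}$ is unstable for $\SL_{q+1}^{\times(k+1)}$. Since instability is a closed condition and $\bfu_{k+1}(q+1)$ is semistable \cite[Cor.~4.9]{BurIke}, this gives $\uQ(T_{d,k}) \leq q$, which is the stated upper bound.

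\begin{enumerate}[(i)]
\item \emph{Echelon form.} For each factor $j$, after acting by $\GL_{q+1}$ on the target I put $X_j$ in echelon form. Concretely, I choose a basis $e_0\vvirg e_q$ of the target so that the coefficient of $e_m$ in $X_j(e_i)$ vanishes whenever $m > i$, while still allowing zero columns. Equivalently, the coordinate functional $e_m^*\circ X_j$ involves only $e_i^*$ with $i \geq m$.
\item \emph{Support of $T'$.} If $(m_0\vvirg m_k)$ lies in the support of $T'$, it comes from some $(i_0\vvirg i_k)\in\Phi$ with $m_j \leq i_j$ for every $j$. Hence every term of $T'$ satisfies $\sum_j m_j \leq d-1$.
\item \emph{Destabilizing subgroup.} Let $g_\eps$ act on each factor by $e_m \mapsto \eps^{m - q/2} e_m$; after clearing the denominator in the exponent, this is a one-parameter subgroup of $\SL_{q+1}$. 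The term indexed by $(m_0\vvirg m_k)$ is rescaled by $\eps^{\sum_j m_j - (k+1)q/2}$. The exponent is at most $d-1-(k+1)q/2$, which is negative.
\item \emph{Conclusion.} Using $\eps^{-1}$ in place of $\eps$, every term tends to $0$, so $T'$ is unstable.
\end{enumerate}

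The only delicate point is step (i): I must check that simultaneous echelonization is legitimate for non-generic $X_j$, in particular when pivots repeat or columns vanish. Since each factor is handled independently by the $\GL_{q+1}$ action, I expect this to be harmless.

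\emph{Lower bound.} Let $q = \lfloor (d-1)/k\rfloor + 1$, so that $k(q-1)\leq d-1$.

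\begin{enumerate}[(i)]
\item \emph{Restriction.} On factors $1\vvirg k$, project onto $\langle e_0\vvirg e_{q-1}\rangle$. On factor $0$, project onto $\langle e_{c_a} : a=0\vvirg q-1\rangle$, where $c_a = d-1-ka \geq 0$. These indices are distinct. The restricted support is
\[
\Psi=\{(c,a_1\vvirg a_k) : a_j \in \{0\vvirg q-1\},\ c = d-1-\textstyle\sum_j a_j \in \{c_a\}\},
\]
with all coefficients equal to $1$.
\item \emph{Weights.} Assign the weights $\phi_j(a) = a^2$ for $j \geq 1$ and $\phi_0(c) = -(d-1-c)^2/k$, and act by $e_a\mapsto \eps^{\phi_j(a)}e_a$ on each factor.
\item \emph{Degeneration.} On $\Psi$ the total weight is $\sum_j a_j^2 - (\sum_j a_j)^2/k \geq 0$ by Cauchy--Schwarz, with equality exactly when $a_1=\cdots=a_k$. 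In that case $c=c_a$, so these are precisely the diagonal terms. Hence the limit as $\eps\to0$ is $\sum_{a} e_{c_a}\otimes e_a^{\otimes k} \simeq \bfu_{k+1}(q)$, and $\uQ(T_{d,k}) \geq q$.
\end{enumerate}

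Non-integer weights are avoided by multiplying all weights by $k$. I expect the upper-bound bookkeeping in step (i) to be the main thing to verify carefully; the lower bound is a routine combinatorial degeneration.
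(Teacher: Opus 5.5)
Your proposal is correct. For the upper bound it takes a genuinely different route from the paper.

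\textbf{Upper bound.} The paper bounds the $G$-stable rank. It exhibits an explicit feasible point of Derksen's linear program in the monomial basis, with weights decreasing linearly in the index $i$ and vanishing for $i > \lfloor 2(d-1)/(k+1)\rfloor$. This gives $\rmR^G(T^{(k)}_{\calR_d}) < \lfloor 2(d-1)/(k+1)\rfloor + 2$ (\cref{thm: upper bound GstableRank TRd}), and the paper concludes with $\uQ \leq \rmR^G$.

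You instead push the instability argument of \cref{prop: socle prop} to full strength. The paper there only normalizes $X_p(1^*)=e_0$; you put each $X_j$ in row echelon form. This always exists: pivots strictly increase, so the pivot of row $m$ lies in a column $\geq m$. Rank deficiency, zero rows, or $q+1>d$ (which only happens for $k=1$) cause no trouble, so the step you flagged as delicate is fine. As a result, every restricted support lies in the down-set $\{\sum_j m_j \leq d-1\}$ of $\{0,\dots,q\}^{k+1}$, strictly below the barycentric level $(k+1)q/2$. A single balanced one-parameter subgroup then destabilizes all restrictions at once.

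Comparing the two upper-bound routes:
\begin{itemize}
\item Yours is self-contained: it needs only closedness of the nullcone and semistability of $\bfu_{k+1}(q+1)$. It produces the integer bound directly, explains where the threshold $2(d-1)/(k+1)$ comes from, and recovers the $\calR_d$ case $k \geq 2d-2$ of \cref{prop: socle prop}.
\item The paper's route buys a bound on the finer real-valued invariant $\rmR^G$ itself, which the paper shows is sharp for $(k,d)=(2,4)$ in \cref{rmk: sharp Gstable rank}.
\end{itemize}

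\textbf{Lower bound.} Here you follow the paper's strategy with a different convex weight. The paper first reduces to $d = kq'+1$ and uses $2^{|p-\bar p|}$ with AM--GM and the triangle inequality. You choose the zeroth-factor indices $c_a = d-1-ka$ directly and use $a^2$ with Cauchy--Schwarz, which is exactly the device the paper uses for $\calT_n$ in \cref{thm: subrank triangular}. Any strictly convex weight would do, and yours is the simplest choice.
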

\begin{proof}
The upper bound follows by \cref{thm: upper bound GstableRank TRd} below, which gives an upper bound on the $G$-stable rank, and therefore on the border subrank by \eqref{eqn: Q bounded by Gstable}. Indeed, \cref{thm: upper bound GstableRank TRd} guarantees 
\[
\uQ(T^{(k)}_{\calR_d}) \leq \left \lfloor \frac{(k+1)(q+1)(q+2)}{(k+1)(q+2)-r} \right\rfloor
\]
where $q = \left\lfloor\frac{2(d-1)}{k+1} \right\rfloor$ and $r = 2(d-1) - q(k+1)$; in particular $r \leq k$. We deduce that
\[
q+1\leq\frac{(k+1)(q+1)(q+2)}{(k+1)(q+2)-r}<q+2,
\]
and passing to the floor we get the desired upper bound.

The lower bound is obtained via an explicit construction of a degeneration of $T^{(k)}_{\calR_d}$ to the unit tensor $\bfu_{k+1}(q'+1)$, where $q' = \lfloor \frac{d-1}{k} \rfloor$. It suffices to provide the construction in the case $d = kq' + 1$ because $T^{(k)}_{\calR_d}$ restricts to $T^{(k)}_{\calR_{d'}}$ if $d' \leq d$. We are going to show that $T^{(k)}_{\calR_d}$ degenerates to the tensor
\begin{equation}\label{eqn: unit tensor in TRd}
\sum_{p=0}^{q'} x^{kp}\otimes(x^p)^*\otimes\cdots\otimes(x^p)^*,
\end{equation}
which is isomorphic to the unit tensor $\bfu_{k+1}(q'+1)$.

Let $\bar{p}=\lfloor q' / 2 \rfloor$. Apply a restriction to $T^{(k)}_{\calR_d}$ by mapping to zero basis vectors $(x^{p})^* \in \calR_d^*$ if $p > q'$ and $x^r \in \calR_d$ if $r$ is not a multiple of $k$. This results in a tensor 
\[
T = \sum_{\substack{0 \leq p_i \leq q' \\ k | (p_1 + \cdots + p_k)}} x^{p_1 + \cdots + p_k} \otimes {x^{p_1}}^* \ootimes {x^{p_k}}^* \in \bbC^{q'+1} \ootimes  \bbC^{q'+1}.
\]
Now define maps $X_0 \vvirg X_k: \bbC^{q'+1} \to \bbC^{q'+1}$, depending on a parameter $\eps$, as follows:
\[
   \begin{array}{rcl} 
    X_0 : \bbC^{q'+1} &\to & \bbC^{q'+1} \\
      x^{kp}& \mapsto &\eps^{-k\cdot2^{|p-\bar{p}|}}x^{kp} 
      \end{array}, \qquad
\begin{array}{rcl}
X_i : \bbC^{q'+1} &\to & \bbC^{q'+1} \\
   (x^p)^* &\mapsto &\eps^{2^{|p-\bar{p}|}}(x^p)^* 
\end{array} \text{ for } i = 1 \vvirg k.
\]
Then 
\begin{align*}
    (X_0 &\ootimes X_k)(T) \\[5pt]
    &=\sum_{\substack{0 \leq p_i \leq q \\ p_1 + \cdots + p_k = kp}} \eps^{-k\cdot2^{|p-\bar{p}|}+2^{|p_1-\bar{p}|}+\cdots+2^{|p_k-\bar{p}|}} \cdot x^{kp} \otimes {x^{p_1}}^* \ootimes {x^{p_k}}^*.
\end{align*}
We are going to prove that the exponents of $\eps$ in the summation above are strictly positive unless $p_1 = \cdots = p_k = p$, in which case they are $0$. This shows that the limit $\lim_{\eps \to 0} (X_0 \ootimes X_k)(T)$ is the sum of the terms with $p_1 = \cdots = p_k = p$, that is the unit tensor of \eqref{eqn: unit tensor in TRd}.  

For $p_1+\cdots+p_k=kp$, using the AM-GM inequality, and the triangular inequality, we obtain
\begin{align*}
    2^{|p_1-\bar{p}|}+\cdots+2^{|p_k-\bar{p}|}&\geq k\cdot\sqrt[k]{2^{|p_1-\bar{p}|}\cdot\cdots\cdot2^{|p_k-\bar{p}|}} \\
    &= k\cdot2^{(|p_1-\bar{p}|+\cdots+|p_k-\bar{p}|)/k} \\
    &\geq k\cdot2^{(|p_1+\cdots+p_k-k\bar{p}|)/k} \\
    &= k\cdot2^{|p-\bar{p}|}.
\end{align*}
This shows that the exponents of $\eps$ in the degeneration are always nonnegative. Moreover, they are zero if and only if both the triangular inequality and the AM-GM inequality are equalities. The AM-GM inequality holds with the equal sign if and only if $|p_j - \bar{p}|$ does not depend on $j$; the triangular inequality holds with the equal sign if and only if the differences $p_j - \bar{p}$ all have the same sign. In turn, this implies $p_1 = \cdots = p_k = p$ as desired. This shows that $T$ degenerates to $\bfu_{k+1}(q'+1)$, concluding the proof.
\end{proof}

\cref{thm: upper bound GstableRank TRd} below provides the upper bound on the G-stable rank required in the proof of \cref{prop: lower bound QTRd}. We point out that in the case of the algebra $\calR_d$, geometric rank does not provide any non-trivial upper bound on border subrank, as observed in \cref{lem: geometric rank TRd}. On the other hand the gap between the lower bound on the border subrank in \cref{prop: lower bound QTRd} and the upper bound obtained via G-stable rank is essentially a multiplicative factor of $2$, independent of $k$ and $d$. In \cref{rmk: sharp Gstable rank}, we provide evidence that the upper bound of \cref{thm: upper bound GstableRank TRd} on G-stable rank is sharp; however, the induced upper bound on the border subrank certainly is not, see \cref{rmk: border subrank TRd4 via invariant}.
\begin{theorem}\label{thm: upper bound GstableRank TRd}
Let $k,d \geq 1$ be integers, let $q=\left\lfloor\frac{2(d-1)}{k+1}\right\rfloor$ and $r = 2(d-1) - q(k+1)$. Then 
\[
\rmR^G(T^{(k)}_{\calR_d}) \leq \rmR^G_\frakb (T^{(k)}_{\calR_d}) \leq  \frac{(k+1)(q+1)(q+2)}{(k+1)(q+2)-r},
\]
where $\frakb$ is the choice of basis $1, x \vvirg x^{d-1}$ on $\calR_d$ and its dual basis on $\calR_d^*$.
\end{theorem}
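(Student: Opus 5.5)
The plan is to exhibit an explicit feasible point of the linear program defining $\rmR^G_\frakb(T^{(k)}_{\calR_d})$ whose objective value equals the stated bound. The first inequality $\rmR^G \leq \rmR^G_\frakb$ holds by definition, so only the second one needs an argument.

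\textbf{Step 1: the support.} Relabelling the basis $1,x \vvirg x^{d-1}$ of the first factor $\calR_d$ in reverse order does not change $\rmR^G_\frakb$. Indeed, the linear program only depends on the support up to a relabelling of indices within each factor. So we may work with the identification $T_{d,k}$ of the previous discussion. Its support is
\[
\Sigma = \{ (i_0 \vvirg i_k) \in \{0 \vvirg d-1\}^{k+1} : i_0 + \cdots + i_k = D\}, \qquad D = d-1.
\]

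\textbf{Step 2: a symmetric affine weighting.} Take the same weights on all $k+1$ factors:
\[
x_{j,i} = \lambda\,(m-i)^+, \qquad m = q+1, \qquad \lambda = \frac{1}{(k+1)m - D},
\]
where $t^+ = \max(t,0)$. These weights are nonnegative as soon as $\lambda>0$. For this, note that $2D = q(k+1) + r$ with $0 \leq r \leq k$. Hence
\[
2\bigl((k+1)(q+1) - D\bigr) = (k+1)(q+2) - r > 0 .
\]
To check feasibility, take $(i_0 \vvirg i_k) \in \Sigma$. Using $(m-i)^+ \geq m-i$ we get
\[
\sum_{j=0}^k x_{j,i_j} \;\geq\; \lambda \sum_{j=0}^k (m - i_j) \;=\; \lambda\bigl((k+1)m - D\bigr) \;=\; 1 .
\]
So every constraint of the linear program is satisfied.

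\textbf{Step 3: the objective value.} The objective is
\[
\sum_{j=0}^{k}\sum_{i=0}^{d-1} x_{j,i} \;\leq\; (k+1)\lambda \sum_{i=0}^{m-1}(m-i) \;=\; (k+1)\lambda\,\frac{m(m+1)}{2}.
\]
This is an inequality rather than an equality only because, if $m > d$, some terms with $i \geq d$ are absent; dropping them can only decrease the sum. Substituting $m = q+1$ and $2/\lambda = (k+1)(q+2) - r$ gives exactly
\[
\frac{(k+1)(q+1)(q+2)}{(k+1)(q+2)-r},
\]
which proves the claim.

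The only real difficulty is choosing the weighting. A convex decreasing piecewise-linear profile truncated at zero is natural: on $\Sigma$ its sum is bounded below by the sum of the untruncated linear function, which is constant on $\Sigma$. The remaining task is choosing the truncation point $m$. I would first try $m = q+1$, and the computation above shows that this value reproduces the target formula exactly.
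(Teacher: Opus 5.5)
Your proposal is correct and follows the paper's proof: the paper uses exactly the weights $w(e_i) = (q+1-i)^+/\bigl((q+1)(k+1)-(d-1)\bigr)$ on every factor. Its feasibility check splits the indices into those $\leq q$ and those $\geq q+1$, which amounts to your bound $(m-i)^+ \geq m-i$. One minor point: $k \geq 1$ forces $q \leq d-1$, so $m \leq d$ always holds and your objective inequality is in fact an equality.
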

\begin{proof}
After suitably relabeling the basis vectors, the tensor $T^{(k)}_{\calR_d}$ can be identified with
\[
T_{d,k} = \sum_{i_0 + \cdots + i_k = d-1} e_{i_0} \otimes e_{i_1} \ootimes e_{i_k}.
\]
We assign a weight $w(e_i)$ to each $e_{i}$ such that $w(e_{i_0})+\cdots+w(e_{i_k})\geq 1$ for $i_0 + \cdots + i_k = d-1$; the values $x_{ij} = w(e_i)$ for every $j$ then gives a feasible solution of the linear program in the definition of $\rmR^G_\frakb(T_{d,k})$, therefore $\displaystyle(k+1)(w(e_0) + \cdots + w(e_{d-1}))$ is an upper bound on the $G$-stable rank. Assign weight $w(e_i) = 0$ if $ i \geq q+1$ and 
\[
w(e_i)=     \frac{q+1-i}{(q+1)(k+1)-(d-1)} \text{ if }i \leq q . 
\]
For $i_0 + \cdots + i_k = d-1$, we may assume $i_j\leq q$ if $ j\leq s$ and $i_{j}\geq q+1$ for $j> s$, for some $s\in\{0,\cdots,k\}$. Then $i_0+\cdots+i_s\leq d-1-(q+1)(k-s)$. 
We obtain that the sum of the weights is 
\begin{align*}
    w(e_{i_0})+\cdots+w(e_{i_k})&=\sum_{j=0}^s\frac{q+1-i_j}{(q+1)(k+1)-(d-1)}\\
    &=\frac{(q+1)(s+1)-(i_0+\cdots+i_s)}{(q+1)(k+1)-(d-1)} \\
    &\geq\frac{(q+1)(s+1)-[d-1-(q+1)(k-s)]}{(q+1)(k+1)-(d-1)} \\
    &=\frac{(q+1)(k+1)-(d-1)}{(q+1)(k+1)-(d-1)}= 1.
\end{align*}
Then the total weight $(k+1) ( w(e_0) + \cdots + w(e_{d-1}))=\frac{(k+1)(q+1)(q+2)}{(k+1)(q+2)-r}$ is an upper bound on the $G$-stable rank of $T^{(k)}_{\calR_d}$.
\end{proof}

We expect the upper bound of \cref{thm: upper bound GstableRank TRd} to be sharp. We show that it is in a specific case, using a characterization of $G$-stable rank in terms of spectral norm of flattening maps in the orbit-closure of the tensor, see \cite[Thm. 5.2]{Derk:GStableRank}.
\begin{remark}\label{rmk: sharp Gstable rank}
Consider $k = 2$, $d = 4$. In this case, after relabeling the basis vectors, the tensor $T^{(2)}_{\calR_4}$ can be identified with
\begin{align*}
T = &e_0 \otimes e_0 \otimes e_3 + e_0 \otimes e_3 \otimes e_0  + e_3 \otimes e_0 \otimes e_0 + 
 e_1 \otimes e_1 \otimes e_1 + \\
 &e_0 \otimes e_1 \otimes e_2 + e_0 \otimes e_2 \otimes e_1 + e_1 \otimes e_0 \otimes e_2 + e_1 \otimes e_2 \otimes e_0 + e_2 \otimes e_0 \otimes e_1 + e_2 \otimes e_1 \otimes e_0.
\end{align*}
With the notation of \cref{thm: upper bound GstableRank TRd}, we have $q = 2$ and $r = 0$, therefore the upper bound on the $G$-stable rank is $\rmR^{G}(T^{(2)}_{\calR_4}) \leq 3$. We prove a matching lower bound using the characterization of \cite[Thm. 5.2]{Derk:GStableRank}. In particular, we have 
\[
\rmR^G(T) = \sup_{g_i \in \GL_4} \min_{s = 0,1,2}  \left( \frac{ {\bigl\Vert (g_0 \otimes g_1 \otimes g_2) T \bigr\Vert_2}}{ {\sigma_s ( (g_0 \otimes g_1 \otimes g_2) T)}} \right)^2,
\]
where $\Vert - \Vert_2$ indicates the Frobenius norm of the tensor and $\sigma_s(-)$ indicates the spectral norm of the $s$-th flattening. In particular, for any choice of $g_i \in \GL_4$, we obtain a lower bound on the $G$-stable rank. Define the linear maps $g_i$, depending on a parameter $\eps$, to be
    \[
    g_0 = g_1 = g_2 = \left( \begin{array}{cccc}
        1 & 0 & 0 & 0 \\
        0 & \eps & 0 & 0 \\
        0 & 0 & 1 & 0 \\     
        0 & 0 & 0 & \eps^2 
    \end{array}\right).   
    \]
Let $T_\eps = (g_0 \otimes g_1 \otimes g_2) T$. One can compute that 
\[
\Vert T_\eps \Vert_2^2 = 6\eps^2 + O(\eps^3).
\]
Moreover, by symmetry, the three flattenings coincide and the square of their spectral norm is $\sigma_i(T_\eps)^2 = 2 \eps^2 + O(\eps^3) $. Using \cite[Thm. 5.2]{Derk:GStableRank}, we obtain 
\[
\rmR^G(T^{(2)}_{\calR_4}) \geq \lim_{\eps \to 0} \frac{ 6\eps^2 + 3\eps^4 + \eps^6}{2 \eps^2} = 3.
\]
\end{remark}
In the case described in \cref{rmk: sharp Gstable rank}, we can prove that the upper bound $\uQ(T^{(2)}_{\calR_4}) \leq 3$ obtained via $G$-stable rank is not sharp. Indeed, we prove $\uQ(T^{(2)}_{\calR_4}) = 2$.
\begin{remark}\label{rmk: border subrank TRd4 via invariant}
Consider $k = 2$, $d = 4$. We provide the upper bound $\uQ(T^{(2)}_{\calR_4}) \leq 2$ which matches the lower bound of \cref{prop: lower bound QTRd} and improves on the upper bound obtained via $G$-stable rank:
\[
2 = \uQ(T^{(2)}_{\calR_4}) < \lfloor \rmR^G(T^{(2)}_{\calR_4}) \rfloor = 3.
\]
To prove $\uQ(T^{(2)}_{\calR_4}) \leq 2$ consider the subvariety 
\[
\calY = \{ T \in \bbC^3 \otimes \bbC^3 \otimes \bbC^3 : T \text{ is a degeneration of } T^{(2)}_{\calR_4}\}
\]
of degenerations of $T^{(2)}_{\calR_4}$ in $\bbC^3 \otimes \bbC^3 \otimes \bbC^3$. If $\uQ(T^{(2)}_{\calR_4}) =3$, then the unit tensor $\bfu_3(3)$ would belong to $\calY$. However, the polynomial $F_6^2 - F_{12}$ defined in terms of the fundamental invariants of $\bbC^3 \otimes \bbC^3 \otimes \bbC^3$ described in \cref{sec: invariants 333} vanishes on $\calY$ but not on $\bfu_3(3)$. This can be proved via an explicit calculation. This shows that $\bfu_3(3) \notin \calY$, therefore $\uQ(T^{(2)}_{\calR_4}) \leq 2$ as desired. We point out $\rmQ(T^{(2)}_{\calR_4}) = 2$ as well, by \cite[Thm. 1.10]{CGZGap}.
\end{remark}

We can provide a sharp upper bound also in the case $(k,d) = (3,3)$.
\begin{remark}\label{rmk: border subrank TRd3 via invariant}
Consider $k = 3$, $d = 3$. We provide the upper bound $\uQ(T^{(3)}_{\calR_3}) \leq 1$ which matches the lower bound of \cref{prop: lower bound QTRd} and improves on the upper bound obtained via $G$-stable rank:
\[
1 = \uQ(T^{(3)}_{\calR_3}) \leq \lfloor \rmR^G(T^{(3)}_{\calR_3}) \rfloor \leq 2.
\]
Unlike the case of \cref{rmk: border subrank TRd4 via invariant}, we do not know if the upper bound on the $\rmR^G(T^{(3)}_{\calR_3})$ in \cref{thm: upper bound GstableRank TRd} is sharp in this case. However, the proof of the upper bound $\uQ(T^{(3)}_{\calR_3}) \leq 1$ is similar to the one of \cref{rmk: border subrank TRd4 via invariant}. Consider the subvariety 
\[
\calY = \{ T \in \bbC^2 \otimes \bbC^2 \otimes \bbC^2 \otimes \bbC^2 : T \text{ is a degeneration of } T^{(3)}_{\calR_3}\}
\]
of degenerations of $T^{(3)}_{\calR_3}$ in ${\bbC^2}^{\otimes 4}$. If $\uQ(T^{(3)}_{\calR_3}) =2$, then the unit tensor $\bfu_4(2)$ would belong to $\calY$. However, the polynomial $16 F_2^3 - 27 F_{6}$ defined in terms of the fundamental invariants of ${\bbC^2}^{\otimes 4}$ described in \cref{sec: invariants 2222} vanishes on $\calY$ but not on $\bfu_4(2)$. This can be proved via an explicit calculation. This shows that $\bfu_4(2) \notin \calY$, therefore $\uQ(T^{(3)}_{\calR_3}) \leq 1$ as desired. Note that the socle degree of $\calR_3$ is $2$, therefore the upper bound $\uQ(T^{(3)}_{\calR_3}) \leq 1$ does not follow from \cref{prop: socle prop}: in fact, the generic restriction of $T^{(3)}_{\calR_3}$ to $\bbC^2 \otimes \bbC^2 \otimes \bbC^2 \otimes \bbC^2$ is semistable, therefore an instability argument such as the one of \cref{prop: socle prop} cannot be applied.
\end{remark}

\subsection{Generalized Coppersmith--Winograd tensors}\label{sec: higher CW}

Consider the algebra 
\[
\calQ_n = \bbC[x_1 \vvirg x_n]/ \bigl( (x_ix_j : i \neq j) + (x_i^2 - x_1^2 ) \bigr).
\]
In the language of apolarity, $\calQ_n$ is the \emph{apolar algebra} of the quadric $g = z_1^2 + \cdots + z_n^2$, see \cite{HJMS22}. In particular $\calQ_n$ is local, with maximal ideal $\frakm = (x_1 \vvirg x_n)$; the square of the maximal ideal $\frakm^2$ is generated by a single element, say, $y$, spanning the quotient of the degree two component of the polynomial ring. In particular, the socle degree of $\calQ_n$ is $2$ for every $n \geq 1$. Let $1^*, x_1^* \vvirg x_n^*, y^*$ be the basis of $\calQ_n^*$ dual to the basis $1, x_1 \vvirg x_n, y$ of $\calQ_n$. We have $\dim \calQ_n = n+2$.

The tensor $T^{(2)}_{\calQ_n}$ is isomorphic to the big Coppersmith--Winograd tensor in $\bbC^{n+2} \otimes \bbC^{n+2} \otimes \bbC^{n+2}$ defined as
\[
\sum_{i =1}^n (e_0 \otimes e_i \otimes e_i + e_i \otimes e_0 \otimes e_i + e_i \otimes e_i \otimes e_0) + e_0 \otimes e_0 \otimes e_{n+1} + e_0 \otimes e_{n+1} \otimes e_0 + e_{n+1} \otimes e_0 \otimes e_0,
\]
which was introduced in \cite{CopWin} and it has been used in the context of Strassen's laser method to determine upper bounds on the exponent of matrix multiplication.

For the algebra $\calQ_n$, we can completely characterize the geometric rank and the border subrank of the $k$-fold multiplication. The first result shows that geometric rank is always equal to $3$, as in the case of the big Coppersmith--Winograd tensor:
\begin{proposition}\label{prop: GR any CW}
For every $n \geq 1$ and every $k \geq 2$, we have
\[
\GR(T^{(k)}_{\calQ_n}) = 3.
\]
\end{proposition}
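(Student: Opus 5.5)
We compute the codimension of $\calZ_k(\calQ_n) = \{ (a_1 \vvirg a_k) \in \calQ_n^{\times k} : a_1 \cdots a_k = 0\}$ by listing its irreducible components, as in the proofs of \cref{prop: null algebra} and \cref{lem: geometric rank TRd}. We write each $a_j = \lambda_j + \ell_j + \mu_j y$, with $\lambda_j \in \bbC$, $\ell_j \in \langle x_1 \vvirg x_n\rangle$ and $\mu_j \in \bbC$. Units are exactly the elements with $\lambda_j \neq 0$. In $\calQ_n$ the product of two linear forms is $\ell \cdot \ell' = \beta(\ell,\ell') y$, where $\beta$ is the standard nondegenerate symmetric bilinear form. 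Products of three or more elements of $\frakm$ vanish, since the socle degree is $2$.

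\emph{Case analysis.} Let $N$ be the number of indices $j$ with $\lambda_j = 0$.
\begin{itemize}
\item If $N = 0$, all factors are units and the product is nonzero.
\item If $N = 1$, say $a_1 \in \frakm$, the product is $a_1 \cdot u$ with $u$ a unit. It vanishes iff $a_1 = 0$, a linear condition of codimension $n+1 \geq 2$.
\item If $N = 2$, say $a_i, a_j \in \frakm$, the product is $\lambda \, \beta(\ell_i,\ell_j)\, y$ with $\lambda$ the product of the remaining units. It vanishes iff $\beta(\ell_i,\ell_j) = 0$. Together with $\lambda_i = \lambda_j = 0$ this gives codimension $3$: the quadric $\{\beta(\ell_i,\ell_j)=0\}$ is an irreducible hypersurface for $n \geq 2$, and for $n=1$ a union of two hyperplanes, each of codimension $3$.
\item If $N \geq 3$, the product vanishes automatically, giving a codimension-$3$ linear space.
\end{itemize}
Hence $\calZ_k(\calQ_n)$ is the union of the sets $\{a_j = 0\}$, of codimension $n+1$, the sets $\{\lambda_i=\lambda_j=0,\ \beta(\ell_i,\ell_j)=0\}$, of codimension $3$, and, if $k \geq 3$, the sets $\{\lambda_i=\lambda_j=\lambda_l=0\}$, of codimension $3$.

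\emph{Conclusion.} The minimal codimension among these pieces is $\min(n+1, 3)$. For $n \geq 2$ this equals $3$, so $\GR = 3$.

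\emph{The case $n = 1$.} This is the only delicate point. The piece $\{a_j = 0\}$ has codimension $n+1 = 2$, which would give $\GR \leq 2$. But for $n=1$, $\calQ_1 = \bbC[x]/(x^3) = \calR_3$, and \cref{lem: geometric rank TRd} gives $\GR = d = 3$.

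The apparent conflict comes from an error in the $N=1$ analysis, which must be rechecked. $\{a_j=0\}$ means $\lambda_j = 0$, $\ell_j = 0$, $\mu_j = 0$, i.e.\ $n+2$ linear conditions, not $n+1$. So its codimension is $n+2 \geq 3$, and the minimum is exactly $3$ for all $n \geq 1$. This is consistent with \cref{lem: geometric rank TRd} at $n=1$.

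The remaining obstacle is to confirm that the codimension-$3$ pieces are genuinely contained in $\calZ_k(\calQ_n)$, which is clear from the computation above. They are nonempty, so the codimension is exactly $3$ rather than larger. This gives $\GR(T^{(k)}_{\calQ_n}) = 3$ for every $k \geq 2$. As a consistency check, \cref{prop: geoRank propagation} requires the value to be non-increasing in $k$, and it is constant here.
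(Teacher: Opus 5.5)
Your argument is correct and is essentially the paper's: the pieces $\{\lambda_i=\lambda_j=0,\ \beta(\ell_i,\ell_j)=0\}$ give $\GR\le 3$, and the case split on how many $a_j$ fail to be units shows that $\calZ_k(\calQ_n)$ is a finite union of closed sets of codimension at least $3$. The only difference is that you list the pieces $\{\lambda_i=\lambda_j=\lambda_l=0\}$ explicitly, whereas the paper absorbs them into $\frakm\times\frakm\times\calQ_n^{\times(k-2)}$ via a genericity argument.

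In a final write-up, state from the outset that $\{a_j=0\}$ has codimension $\dim\calQ_n=n+2\ge 3$. That removes the slip to $n+1$ and makes the detour through $\calQ_1\simeq\calR_3$ unnecessary.
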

\begin{proof}
    Consider the variety $\calZ_k(\calQ_n) \subseteq \calQ_n^{\times k}$. Let $\frakm_1 = \langle x_1 \vvirg x_n \rangle$ be the degree $1$ component of $\calQ_n$. The bilinear form  
    \[
f^* : \frakm_1 \times \frakm_1 \to \langle y \rangle \simeq \bbC
    \]
    defined by the multiplication in $\calQ_n$ is nondegenerate. It defines components of codimension $3$ in $\calZ_k(\calQ_n)$ given by all permutations of 
    \[
   \calZ' \times \calQ_n^{\times k-2} \subseteq  \calQ_n^{\times k}
    \]
    where $\calZ' \subseteq \calQ_n^{\times 2}$ is the variety defined by $f^*(a_1,a_2) = 0$ on $\frakm \times \frakm$, which has codimension $1$ in $\frakm \times \frakm$, hence codimension $3$ in $\calQ_n \times \calQ_n$ since $\calQ_n = \langle 1 \rangle \oplus \frakm$.    This shows $\GR(T^{(k)}_{\calQ_n}) \leq 3$. To show the lower bound, observe that if $(a_1 \vvirg a_k)$ satisfies $1^*(a_j) \neq 0$ for all but at most one index $j_0$, then either $a_{j_0} =0$ or $a_1 \cdots a_k \neq 0$. Therefore, the components of $\calZ_k(\calQ_n)$ are contained in the product $\frakm \times \frakm \times \calQ_n^{\times (k-2)}$ or one of its permutations, or they coincide with the trivial components $\{0\} \times \calQ_n^{\times (k-1)}$ or one of its permutations. The latter have codimension equal to $\dim \calQ_n  = n+2$; the former have codimension at least $1$ in $\frakm \times \frakm \times \calQ_n^{\times (k-2)}$ because the generic element of this product does not belong to $\calZ_k(\calQ_n)$. This concludes the proof that $\GR(T^{(k)}_{\calQ_n}) \geq 3$. 
\end{proof}

The characterization of border subrank is as follows.
\begin{theorem}\label{thm: subrank higher CW}
Let $n \geq 1$ and $k \geq 2$. Then 
\begin{enumerate}
    \item if $k \geq 4$ or if $(k,n) = (3,1)$ then $\rmQ(T^{(k)}_{\calQ_n}) = \uQ(T^{(k)}_{\calQ_n}) = 1$;
    \item if $k = 3$ and $n \geq 2$, or if $(k,n) = (2,1), (2,2)$ then $\rmQ(T^{(k)}_{\calQ_n}) =\uQ(T^{(k)}_{\calQ_n}) = 2$;
    \item if $k = 2$ and $n \geq 3$ then $\rmQ(T^{(k)}_{\calQ_n}) =\uQ(T^{(k)}_{\calQ_n}) = 3$.
\end{enumerate}
\end{theorem}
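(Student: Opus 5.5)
The plan is to treat the upper and lower bounds separately. For the upper bounds, the first tool is \cref{prop: socle prop}, and when it does not apply I would use a $G$-stable rank computation, falling back on an invariant-theoretic obstruction in the small cases. For the lower bounds I would give explicit restrictions, since the statement claims equality already at the level of $\rmQ$.

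\emph{Upper bounds.} The algebra $\calQ_n$ is local with socle degree $s=2$ and $\dim\frakm^2=1$, so $r=1$. Hence \cref{prop: socle prop} gives $\uQ(T^{(k)}_{\calQ_n})=1$ both for $k\ge 5$ and for $(k,r)=(4,1)$; this settles $k\ge4$. For $k=3$ and for $k=2$ I would bound $\rmR^G_\frakb$ in the basis $\frakb$ given by $1,x_1\vvirg x_n,y$ and its dual. Grade the output factor by reversed degree. Then every element of the support has total degree $2$: either one factor of degree $2$ and all others of degree $0$, or two factors of degree $1$ and the rest of degree $0$. Assign weight $a$ to degree-$0$ vectors and weight $0$ to all other vectors, in every factor. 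Feasibility requires $(k-1)a\ge1$, because the second type of term has $k-1$ factors of degree $0$. With $a=1/(k-1)$ the total weight is $(k+1)/(k-1)$, which equals $2$ for $k=3$ and $3$ for $k=2$. By \eqref{eqn: Q bounded by Gstable} this gives $\uQ\le2$ for $k=3$ and $\uQ\le 3$ for $k=2$; the case $k=2$ also follows from \cref{prop: GR any CW}.

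The remaining small cases are handled as follows.
\begin{itemize}
\item For $n=1$ we have $\calQ_1\simeq\calR_3$. The case $k=3$ is \cref{rmk: border subrank TRd3 via invariant}, and the case $k=2$ follows from \cref{prop: lower bound QTRd}, since $\lfloor 4/3\rfloor+1=2$.
\item For $(k,n)=(2,2)$, change variables to $u=x_1+ix_2$ and $v=x_1-ix_2$; this gives $\calQ_2\simeq\bbC[u,v]/(u^2,v^2)$, with $uv$ a multiple of $y$. The bound $\uQ(T^{(2)}_{\calQ_2})\le 2$ is what I expect to be the main obstacle. Uniform weights on the monomial support only give $3$, so first I would search for non-symmetric weights, differing between factors, in the linear program for $\rmR^G_\frakb$ in the basis $1,u,v,uv$. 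If that fails, I would argue as in \cref{rmk: border subrank TRd4 via invariant}: exhibit a polynomial in the fundamental invariants of $\bbC^3\otimes\bbC^3\otimes\bbC^3$ that vanishes on all degenerations of $T^{(2)}_{\calQ_2}$ into $(\bbC^3)^{\otimes3}$ but not on $\bfu_3(3)$, and verify this by explicit computation.
\end{itemize}

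\emph{Lower bounds.} The bound $\rmQ\ge1$ is trivial. For $k=2$ and $n\ge1$, $T^{(2)}_{\calQ_n}$ is concise, so \cite[Thm.~1.10]{CGZGap} gives $\rmQ\ge2$, as in the proof of \cref{prop: null algebra}. For $k=2$ and $n\ge3$ I would restrict the big Coppersmith--Winograd tensor to an explicit $\bfu_3(3)$. To do this, pick three elements of $\frakm_1$ that are pairwise orthogonal and non-isotropic for the form $f^*$, together with $1$ and suitable output functionals. I would then check diagonality by hand, using that $f^*$ is nondegenerate on $\frakm_1$ with $n\ge3$, so a sufficiently rich orthogonal frame exists. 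For $k=3$ and $n\ge2$ I would work with the isotropic basis $u,v$ (and $x_3\vvirg x_n$), where $u^2=v^2=0$ and $uv=y$. The idea is to choose two-dimensional subspaces of inputs, built from combinations of $1,u,v$, together with two output functionals (using $1^*$ and $y^*$) such that the trilinear map $a_1a_2a_3$ becomes $\bfu_4(2)$. If an exact restriction is hard to find, I would first produce a degeneration by giving weights in $\eps$ as in the proof of \cref{prop: lower bound QTRd}, and then try to upgrade it to a restriction. Since the claim is equality for $\rmQ$, this explicit $k=3$ restriction is the second delicate step, after the $(2,2)$ upper bound.
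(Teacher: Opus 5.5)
Your treatment of $k\ge 4$ via \cref{prop: socle prop} (with $s=2$, $r=1$) matches the paper. Your reductions of the case $n=1$ to $\calR_3$ are also fine.

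Your $G$-stable rank bound for $k=3$ is correct, and it takes a genuinely different and much shorter route than the paper. Weight $1/2$ on $y$ in the output factor and on $1^*$ in each input factor covers every support element, so $\rmR^G_\frakb(T^{(3)}_{\calQ_n})\le 2$ for all $n$. The paper instead identifies the variety of degenerations into $(\bbC^2)^{\otimes 4}$ as the hyperdeterminant hypersurface, using tangent spaces and biduality, and then exhibits a border rank $3$ tensor off it. Their route gives more information, but yours suffices for the theorem.

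However, two steps you flag as delicate are not actually carried out, and in both your primary suggestion fails.

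\emph{The bound $\uQ(T^{(2)}_{\calQ_2})\le 2$.}
\begin{itemize}
\item Non-symmetric weights cannot help. In the basis $1,u,v,uv$ (and likewise $1,x_1,x_2,y$), put weight $1/2$ on each of the six support elements having two factors of degree one. This is a feasible point of the dual linear program, since every basis vector lies in at most two of these elements. Hence $\rmR^G_\frakb=3$ exactly in that basis. This is the same phenomenon as in \cref{rmk: border subrank TRd4 via invariant}.
\item Your fallback is the paper's argument, but as you state it, it is only a reformulation of the claim that $\bfu_3(3)$ is not a degeneration of $T^{(2)}_{\calQ_2}$. The content is the specific invariant.
\item The paper uses $F_6^2-4F_{12}$. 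It vanishes on all degenerations of $T^{(2)}_{\calQ_2}$ in $(\bbC^3)^{\otimes 3}$, and equals $-3$ at $\bfu_3(3)$ with the normalization of \cref{sec: invariants 333}.
\end{itemize}

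\emph{The bound $\rmQ(T^{(3)}_{\calQ_n})\ge 2$ for $n\ge 2$.} Output functionals spanning $\langle 1^*,y^*\rangle$ can never work.
\begin{itemize}
\item Let $A_1,A_2,A_3$ be the input maps and $\ell_i=1^*\circ A_i$. The $1^*$-component of the restriction is the rank-one form $\ell_1\otimes\ell_2\otimes\ell_3$.
\item The only rank-one forms in the span of the two summands of $\bfu_4(2)$ are multiples of those summands. So $\ell_1\otimes\ell_2\otimes\ell_3$ is one of them, and the other must be nonzero on $\ker\ell_1\times\ker\ell_2\times\ker\ell_3$.
\item On that set each $A_i$ takes values in $\frakm$. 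Since $\frakm^3=0$, the $y^*$-component vanishes there too, which is a contradiction.
\end{itemize}

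A working restriction needs an output functional that sees $\frakm_1$. It also needs orthogonal \emph{non-isotropic} vectors rather than $u,v$: for $n=2$, an isotropic vector is its own orthogonal complement in $\frakm_1$. The paper takes inputs $(x_1,1)$, $(1,x_2)$, $(1,x_2)$ and output functionals $x_1^*,y^*$. Then $x_1\cdot 1\cdot 1=x_1$ and $1\cdot x_2\cdot x_2=y$, while every mixed product is zero or annihilated by both output functionals.

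Your $k=2$, $n\ge 3$ sketch has the right ingredients but also needs to be made explicit. The paper's choice is inputs $(x_1,1,x_3)$ and $(1,x_2,x_3)$, with output functionals $x_1^*,x_2^*,y^*$.
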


We prove the different cases separately in the following.
\begin{proposition}\label{prop: border subrank CW k2}
Let $k = 2$. Then 
\begin{align*}
&\rmQ(T^{(2)}_{\calQ_1}) = \uQ(T^{(2)}_{\calQ_1}) = 2, \\
&\rmQ(T^{(2)}_{\calQ_2}) = \uQ(T^{(2)}_{\calQ_2}) = 2, \\
&\rmQ(T^{(2)}_{\calQ_n}) = \uQ(T^{(2)}_{\calQ_n}) = 3 \quad \text{ for $n \geq 3$}.
\end{align*}
\end{proposition}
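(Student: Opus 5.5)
The upper bound $\uQ(T^{(2)}_{\calQ_n}) \leq 3$ for every $n$ follows at once from \cref{prop: GR any CW} and \eqref{eqn: Q bounded by Grank}. Since $\rmQ \leq \uQ$, for each $n$ it then suffices to prove a lower bound on $\rmQ$ by an explicit restriction and a matching upper bound on $\uQ$.

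\emph{Case $n=1$.} Here $\calQ_1 = \bbC[x]/(x^3) = \calR_3$. By \cref{prop: lower bound QTRd} with $k=2$, $d=3$, the upper bound is $\lfloor 4/3 \rfloor + 1 = 2$. The lower bound $\rmQ \geq 2$ follows from \cite[Thm.~1.10]{CGZGap}, since $T^{(2)}_{\calQ_1}$ is concise and is not the $W$-tensor. Alternatively, restrict to the pairs $(1,1)\mapsto 1$ and $(x,x)\mapsto x^2$ and check that all cross terms are killed by the projections.

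\emph{Case $n=2$.} I would change variables to $u = x_1 + i x_2$ and $v = x_1 - i x_2$. Then $(x_1x_2,\, x_1^2 - x_2^2) = (u^2, v^2)$, so $\calQ_2 \simeq \calR_2 \otimes \calR_2$. The lower bound $\rmQ \geq 2$ again comes from \cite[Thm.~1.10]{CGZGap} by conciseness.

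The upper bound $\uQ \leq 2$ is the main obstacle. Geometric rank gives only $3$. The monomial-basis $G$-stable rank LP also gives only $3$: with weights $w_0,w_1,w_2$ on $1$, on $u,v$, and on $uv$, the constraints $w_2 + 2w_0 \geq 1$ and $2w_1 + w_0 \geq 1$ allow total value $3$, attained at $w_0 = 1/2$, $w_1 = 1/4$, $w_2 = 0$, and a symmetric LP cannot do better. I would therefore imitate \cref{rmk: border subrank TRd4 via invariant}. Take a generic restriction of $T^{(2)}_{\calQ_2}$ to $\bbC^3 \otimes \bbC^3 \otimes \bbC^3$, with the normalization $X_p(1^*) = e_0$, and evaluate the fundamental invariants $F_6, F_{12}$ of \cref{sec: invariants 333} on it. The goal is a polynomial combination of them that vanishes identically on all such restrictions, and hence on the whole orbit closure, but not on $\bfu_3(3)$. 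If no such combination exists, I would first try to show that every such restriction is unstable, using a one-parameter subgroup adapted to the grading $\deg 1 = 0$, $\deg u = \deg v = 1$, $\deg uv = 2$, in the spirit of \cref{prop: socle prop}.

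\emph{Case $n \geq 3$.} The upper bound is the geometric rank bound, so it remains to find a restriction to $\bfu_3(3)$. It suffices to treat $n = 3$, since $T^{(2)}_{\calQ_n}$ restricts to $T^{(2)}_{\calQ_3}$ by projecting away $x_4,\dots,x_n$. I would use the isotropic basis $u, v, x_3$ of $\frakm_1$, in which $uv = 2y$, $x_3^2 = y$ and $u^2 = v^2 = 0$. I would then search for three triples, one of them having output $y$ and the other two having outputs $u$ and $v$, such that after projecting each factor onto the chosen span all cross terms vanish. A first attempt, with $A$-slot $\{x_3, u, 1\}$, $B$-slot $\{x_3, 1, v\}$ and outputs $\{y, u, v\}$, leaves one unwanted term $(u,v)\mapsto y$. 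I would remove it by replacing the dual functionals with suitable linear combinations, for instance pairing $x_3^*$ with corrections in $u^*$ and $v^*$ so that the bilinear form $f^*$ becomes diagonal on the selected vectors. If this fails, I would fall back on the known subrank $3$ of the big Coppersmith--Winograd tensor with $q \geq 3$ and cite it.
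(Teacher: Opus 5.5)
Your case $n=1$ is correct and takes a different route from the paper. You use $\calQ_1\cong\calR_3$, the $G$-stable rank bound of \cref{prop: lower bound QTRd} and an explicit restriction. The paper instead gets the $n=1$ upper bound from the $n=2$ one, via the restriction $T^{(2)}_{\calQ_2}\to T^{(2)}_{\calQ_1}$. Your route has the merit of not depending on any invariant computation.

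The gaps are in the two steps that carry the content of the proposition. For $\uQ(T^{(2)}_{\calQ_2})\le 2$, you describe the paper's strategy but stop before its only nontrivial point, and you even leave open whether a suitable invariant exists. The paper supplies it: $F_6^2-4F_{12}$ vanishes on the variety of degenerations of $T^{(2)}_{\calQ_2}$ in $\bbC^3\otimes\bbC^3\otimes\bbC^3$ (by a direct computation), while its value on $\bfu_3(3)$ is $1-4=-3$. Without exhibiting and verifying such a polynomial, the case $n=2$ is not proved.

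Your fallback cannot work. In the basis $1,u,v,w=uv$, take $1\mapsto 0$, $u\mapsto e_2$, $v\mapsto e_1$, $w\mapsto e_0$ on the first factor. On the other two factors take $1^*\mapsto e_0$, $u^*\mapsto e_1$, $v^*\mapsto e_2$, $w^*\mapsto 0$. These maps restrict $T^{(2)}_{\calQ_2}$ to $P=\sum_{\sigma\in S_3}e_{\sigma(0)}\otimes e_{\sigma(1)}\otimes e_{\sigma(2)}$. Every flattening of $P$ sends the dual basis to three pairwise orthogonal vectors of the same norm. So $P$ is a zero of the moment map and, by the Kempf--Ness theorem, has a closed $\SL_3^{\times 3}$-orbit. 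Thus $P$ is a semistable restriction of $T^{(2)}_{\calQ_2}$, and no instability argument as in \cref{prop: socle prop} is possible (compare \cref{rmk: border subrank TRd3 via invariant}).

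For $n\ge 3$ the restriction is also only searched for, not found. The criterion you state at the end is the right one. Take pairs $(a,1)\mapsto a$, $(1,b)\mapsto b$, $(z,z)\mapsto y$ with $a,b,z\in\frakm_1$, and an output map killing $1$ and $z$. Then all cross terms vanish exactly when $f^*(a,b)=f^*(a,z)=f^*(z,b)=0\neq f^*(z,z)$, i.e.\ when $a,b,z$ is an $f^*$-orthogonal basis of $\frakm_1$. Perturbing only $x_3$ cannot remove the surviving term, because its coefficient $f^*(u,v)$ does not involve $x_3$; the isotropic pair $u,v$ is the obstacle. The original basis is already orthogonal. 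Take $a=x_1$, $b=x_2$, $z=x_3$: $X_0$ keeps $x_1,x_2,y$, $X_1$ keeps $x_1^*,1^*,x_3^*$, and $X_2$ keeps $1^*,x_2^*,x_3^*$. This gives $\bfu_3(3)$ because $x_ix_j=0$ for $i\neq j$, and it is exactly the paper's restriction.
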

\begin{proof}
Since $T^{(2)}_{\calQ_n}$ is a restriction of $T^{(2)}_{\calQ_{n+1}}$, we have $\rmQ(T^{(2)}_{\calQ_n}) \leq \rmQ(T^{(2)}_{\calQ_{n+1}})$ and $\uQ(T^{(2)}_{\calQ_n}) \leq \uQ(T^{(2)}_{\calQ_{n+1}})$. The lower bound $\rmQ(T^{(2)}_{\calQ_1}) \geq 2$ follows from \cite[Thm.~1.10]{CGZGap} because $T^{(2)}_{\calQ_1}$ is concise. The upper bound $\uQ(T^{(2)}_{\calQ_n}) \leq 3$ follows from \cref{prop: GR any CW}. To conclude, it remains to prove the upper bound $\uQ(T^{(2)}_{\calQ_2}) \leq 2$ and the lower $\rmQ(T^{(2)}_{\calQ_3}) \geq 3$.

    For $n=2$, consider the variety
    \[
   \calC = \{ T \in \bbC^3 \otimes \bbC^3 \otimes \bbC^3: T \text{ is a degeneration of } T^{(2)}_{\calQ_2}\}.
    \]
    A direct computation shows the polynomial $F_6^2 - 4F_{12}$, defined in terms of the invariants described in \cref{sec: invariants 333}, vanishes on $\calC$. Since $F_6^2- 4F_{12}$ does not vanish on $\bfu_3(3)$, this shows the upper bound $\uQ(T^{(2)}_{\calQ_2}) \leq 2$, which in turn implies $\rmQ(T^{(2)}_{\calQ_n}) =\uQ(T^{(2)}_{\calQ_n}) = 2$ for $n = 1,2$.

    Next, we show that $T^{(2)}_{\calQ_3}$ restricts to $\bfu_3(3)$, providing $\rmQ(T^{(2)}_{\calQ_3}) = 3$ and therefore $\rmQ(T^{(2)}_{\calQ_n}) =3$ for all $n \geq 3$. Consider the restriction given by the following maps:
    \[
    \begin{array}{rlll}
    X_0 : &\calA & \to &\bbC^3 \\
        &1 & \mapsto & 0 \\
        &x_1 & \mapsto & e_0 \\ 
        &x_2 &\mapsto & e_1 \\ 
        &x_3 &\mapsto & 0 \\ 
        &y & \mapsto & e_2 
    \end{array},
    \qquad
        \begin{array}{rlll}
    X_1 : &\calA^* & \to &\bbC^3 \\
        &1^* & \mapsto & e_1 \\
        &x_1^* & \mapsto & e_0 \\ 
        &x_2^* &\mapsto & 0 \\ 
        &x_3^* &\mapsto & e_2 \\ 
        &y^* & \mapsto & 0
    \end{array},
    \qquad
    \begin{array}{rlll}
    X_2 : &\calA^* & \to &\bbC^3 \\
        &1^* & \mapsto & e_0 \\
        &x_1^* & \mapsto & 0 \\ 
        &x_2^* &\mapsto & e_1 \\ 
        &x_3^* &\mapsto & e_2 \\ 
        &y^* & \mapsto & 0 
    \end{array}.
    \]
    One can immediately verify that $(X_0 \otimes X_1 \otimes X_2)(T^{(2)}_{\calQ_3}) = \bfu_3(3)$.
\end{proof}

\begin{proposition}\label{border subrank CW k3}
If $k = 3$, then 
\[
\rmQ(T^{(3)}_{\calQ_1}) = \uQ(T^{(3)}_{\calQ_1}) = 1, \quad \text{ and } \quad \rmQ(T^{(3)}_{\calQ_n}) = \uQ(T^{(3)}_{\calQ_n}) = 2 \text{ for every } n \geq 2.
\]
\end{proposition}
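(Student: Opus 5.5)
For $(k,n) = (3,1)$ we observe that $\calQ_1 = \bbC[x_1]/(x_1^3) \simeq \calR_3$, so $T^{(3)}_{\calQ_1}$ is isomorphic to $T^{(3)}_{\calR_3}$. The equality $\rmQ = \uQ = 1$ is then \cref{rmk: border subrank TRd3 via invariant}; the lower bound $1$ is trivial. For $n \geq 2$ the plan has two parts: an explicit restriction to $\bfu_4(2)$ for the lower bound, and an instability argument in the spirit of \cref{prop: socle prop} for the upper bound $\uQ(T^{(3)}_{\calQ_n}) \leq 2$. The upper bound is needed because \cref{prop: GR any CW} only gives $3$, and \cref{prop: socle prop} with $s = 2$, $r = 1$ only covers $k \geq 4$.

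\emph{Lower bound.} We restrict to the two terms $x_1 \otimes x_1^* \otimes 1^* \otimes 1^*$ and $y \otimes 1^* \otimes x_2^* \otimes x_2^*$. Take the following maps, all other basis vectors being sent to $0$:
\begin{itemize}
\item $X_0$ sends $x_1 \mapsto e_0$ and $y \mapsto e_1$;
\item $X_1$ sends $x_1^* \mapsto e_0$ and $1^* \mapsto e_1$;
\item $X_2 = X_3$ send $1^* \mapsto e_0$ and $x_2^* \mapsto e_1$.
\end{itemize}
The surviving terms correspond to products $a_1 a_2 a_3$ with $a_1 \in \{x_1, 1\}$, $a_2, a_3 \in \{1, x_2\}$, and output in $\{x_1, y\}$. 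Using $x_1 x_2 = 0$ and $x_2^2 = y$, the only product equal to $x_1$ is $x_1 \cdot 1 \cdot 1$, and the only one equal to $y$ is $1 \cdot x_2 \cdot x_2$. Hence the restriction is exactly $\bfu_4(2)$, giving $\rmQ(T^{(3)}_{\calQ_n}) \geq 2$.

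\emph{Upper bound.} As in \cref{prop: socle prop}, it suffices to show that a generic restriction $T' = (X_0 \otimes \cdots \otimes X_3) T^{(3)}_{\calQ_n} \in (\bbC^3)^{\otimes 4}$ is unstable under $\SL_3^{\times 4}$. Instability is a closed condition and $\bfu_4(3)$ is not unstable, so this obstructs $\uQ \geq 3$. By genericity and the $\GL_3$-action we may normalize $X_p(1^*) = e_0$ for $p = 1,2,3$ and $X_0(y) = e_0$. We then use the same one-parameter subgroup on all four factors,
\[
g_\eps(e_0) = \eps^2 e_0, \qquad g_\eps(e_1) = \eps^{-1} e_1, \qquad g_\eps(e_2) = \eps^{-1} e_2 .
\]
Any $e_0$-components of the images of other basis vectors only raise exponents, so the worst case is to count $e_0$ only where $1^*$ or $y$ appears. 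Since $\frakm^3 = 0$, among the input factors $1,2,3$ at most two are non-unit. The exponent bounds are then:
\begin{itemize}
\item output $1$ (all inputs units): at least $6 - 1$;
\item output $x_i$ (one non-unit input): at least $3 - 1$;
\item output $y$ arising as $y \cdot 1 \cdot 1$ and permutations: at least $3 + 2$;
\item output $y$ arising from two $x$'s: at least $0 + 2$, the $+2$ coming from $X_0(y) = e_0$.
\end{itemize}
All exponents are positive, so $\lim_{\eps \to 0} g_\eps^{\otimes 4} T' = 0$ and $T'$ is unstable.

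The main point to double-check is the normalization $X_0(y) = e_0$ simultaneously with $X_p(1^*) = e_0$. These normalizations concern different factors, so they are compatible for generic $X_p$. Degenerate restrictions are handled by the closedness of instability, which makes it enough to treat generic ones. Combining the two parts yields $\rmQ = \uQ = 2$ for $n \geq 2$.
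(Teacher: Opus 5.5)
Your proof is correct. The lower-bound restriction is exactly the paper's. The case $n=1$ rests on the same kind of explicit invariant computation: you import it from \cref{rmk: border subrank TRd3 via invariant} via $\calQ_1\cong\calR_3$, while the paper redoes it on $\calD_1$. An invariant computation is unavoidable there, since the generic restriction to $(\bbC^2)^{\otimes 4}$ is semistable.

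The genuine difference is the upper bound $\uQ(T^{(3)}_{\calQ_n})\le 2$ for $n\ge 2$. The paper works in $(\bbC^2)^{\otimes 4}$:
\begin{itemize}
\item it shows the tangent spaces of the degeneration variety $\calD_n$ lie in a hyperplane;
\item it checks by direct computation for $n=2$ that they have dimension $15$;
\item it uses biduality to identify $\calD_n$ with the hyperdeterminant hypersurface;
\item it exhibits a border-rank-$3$ tensor with nonzero hyperdeterminant, via Schl\"afli's method.
\end{itemize}

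You instead show that every restriction to $(\bbC^3)^{\otimes 4}$ lies in the $\SL_3^{\times 4}$-nullcone. You use $\mathrm{diag}(\eps^2,\eps^{-1},\eps^{-1})$ on all four factors, plus the extra normalization $X_0(y)=e_0$ on the output factor. Your exponent bounds $5,2,5,2$ are right for the terms $1\otimes(1^*)^{\otimes 3}$, $x_i\otimes x_i^*\otimes 1^*\otimes 1^*$, $y\otimes y^*\otimes 1^*\otimes 1^*$ and $y\otimes x_i^*\otimes x_i^*\otimes 1^*$ (and their permutations). The normalizations are compatible since they live on different factors. The step from generic to all restrictions, and then to $\bfu_4(3)$, is justified by closedness of the nullcone.

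Your route is shorter and avoids both the tangent-space and hyperdeterminant computations. The same count also gives a more general statement. If $\calA$ is as in \cref{prop: socle prop} with $\dim\frakm^s=1$, every exponent is at least $s$, so $\uQ(T^{(2s-1)}_\calA)\le 2$. This covers the case $k=2s-1$, just below the range of that proposition.

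The paper's route buys structural information your nullcone argument does not give. It yields an explicit, $n$-independent description of all degenerations of $T^{(3)}_{\calQ_n}$ into $(\bbC^2)^{\otimes 4}$.
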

\begin{proof}
We proceed in a way similar to \cref{prop: border subrank CW k2}. Clearly $\rmQ(T^{(3)}_{\calQ_n}) \geq 1$ for every $n$. Consider the variety
    \[
   \calD_n = \{ T \in \bbC^2 \otimes \bbC^2 \otimes \bbC^2 \otimes \bbC^2 : T \text{ is a degeneration of } T^{(3)}_{\calQ_n}\}.
    \]
    If $n = 1$, a direct computation shows the polynomial $2F_2^3-27F_6$, defined in terms of the invariants described in \cref{sec: invariants 2222}, vanishes on $\calD_1$. Since it does not vanish on $\bfu_4(2)$, we obtain the upper bound $\uQ(T^{(3)}_{\calQ_1}) \leq 1$. This proves $\rmQ(T^{(3)}_{\calQ_1}) =\uQ(T^{(3)}_{\calQ_1}) = 1$.

    Next, we show that $T^{(3)}_{\calQ_2}$ restricts to $\bfu_4(2)$, providing $\rmQ(T^{(3)}_{\calQ_2}) \geq 2$ and therefore $\rmQ(T^{(3)}_{\calQ_n}) \geq \uQ(T^{(3)}_{\calQ_n}) \geq 2$ for all $n \geq 3$. Consider the restriction given by the following maps:
    \[
    \begin{array}{rlll}
    X_0 : &\calA & \to &\bbC^2 \\
        &1 & \mapsto & 0 \\
        &x_1 & \mapsto & e_0 \\ 
        &x_2 &\mapsto & 0 \\ 
        &y & \mapsto & e_1 
    \end{array},
    \qquad
        \begin{array}{rlll}
    X_1 : &\calA^* & \to &\bbC^2 \\
        &1^* & \mapsto & e_1 \\
        &x_1^* & \mapsto & e_0 \\ 
        &x_2^* &\mapsto & 0 \\ 
        &y^* & \mapsto & 0
    \end{array},
    \qquad
    \begin{array}{rlll}
    X_2 , X_3 : &\calA^* & \to &\bbC^2 \\
        &1^* & \mapsto & e_0 \\
        &x_1^* & \mapsto & 0 \\ 
        &x_2^* &\mapsto & e_1 \\ 
        &y^* & \mapsto & 0 
    \end{array}.
    \]
    One can immediately verify that $(X_0 \ootimes X_3 )(T^{(3)}_{\calQ_2}) = \bfu_4(2)$.

To prove the upper bound $\uQ(T^{(3)}_{\calQ_n}) \leq 2$, we show that if $n \geq 2$, the variety $\calD_n$ does not depend on $n$ and it is a hypersurface not containing some tensors of border rank $3$. In fact, $\calD_n$ is the hypersurface cut out by the hyperdeterminant in $\bbC^2 \ootimes \bbC^2$. To see this, let $X_0 \vvirg X_3$ be linear maps and let $T = (X_0 \ootimes X_3) (T^{(3)}_{\calQ_n})$ be a generic point of $\calD_n$. After possibly acting with $\GL_2 \ttimes \GL_2$, we may assume 
    \begin{align*}
\begin{array}{rlcl}
    X_0 : & 1 & \mapsto &e^{(0)}_0 \\
          & y & \mapsto &e^{(0)}_1 \\
          & x_j & \mapsto &v^{(0)}_j
    \end{array}
\qquad \begin{array}{rlcl}
    X_i : & 1^* & \mapsto &e^{(i)}_0 \\
          & y^* & \mapsto &e^{(i)}_1 \\
          & x_j^* & \mapsto &v^{(i)}_j
    \end{array} \text{ for $i = 1,2,3$}
\end{align*}
for generic vectors $v^{(i)}_j \in \bbC^2$. We prove that for every $n \geq 2$, the tangent space of $\calD_n$ at $T$ is 
\[
\bfT_{T} \calD_n = \langle e_0^{(0)*} \otimes e_1^{(1) *} \otimes e_1^{(2) *} \otimes e_1^{(3) *} \rangle^\perp .
\]
Such tangent space is the image of the differential at the point $(X_0,X_1,X_2,X_3)$ of the restriction map 
\[
\Phi : \Hom(\calQ_n, \bbC^2) \times \Hom(\calQ_n^*, \bbC^2) \times \Hom(\calQ_n^*, \bbC^2) \times \Hom(\calQ_n^*, \bbC^2)\to \bbC^2 \otimes \bbC^2 \otimes \bbC^2 \otimes \bbC^2
\]
defined by $\Phi(Y_0 \vvirg Y_3) = Y_0 \ootimes Y_3 ( T^{(3)}_{\calQ_n})$. By Leibniz rule, we have that
\begin{align*}
\bfT_T \calD_n = &\image ( \rmd_{(X_0 \vvirg X_3)} \Phi) = \\ &\Hom(\calQ_n,\bbC^2) \cdot ((X_1 \otimes X_2 \otimes X_3) (T^{(3)}_{\calQ_n})) + \cdots + \Hom(\calQ_n^*,\bbC^2)  (X_0 \otimes X_1 \otimes X_2 )(T^{(3)}_{\calQ_n})
\end{align*}
is the sum of four linear spaces. We show that all four belong to $\langle e_0^{(0)*} \otimes e_1^{(1) *} \otimes e_1^{(2) *} \otimes e_1^{(3) *} \rangle^\perp $.

Let $Z \in \Hom(\calQ_n,\bbC^2) \cdot ((X_1 \otimes X_2 \otimes X_3) (T^{(3)}_{\calQ_n}))$ belong to the $0$-th summand. Write $Z$ as sum of simple tensors $e_{i_0} \ootimes e_{i_3}$, and observe that every summand has at least one of the indices $i_1 \vvirg i_3$ equal to $0$ because the product of any three elements of $\frakm$ in $\calQ_n$ equals $0$. Therefore every summand of $Z$ is mapped to $0$ by $ e_0^{(0)*} \otimes e_1^{(1) *} \otimes e_1^{(2) *} \otimes e_1^{(3) *} $. 

Let $Z$ belong to one of the other three summands and without loss of generality assume it is the first one: $Z \in \Hom(\calQ_n^*,\bbC^2) \cdot ((X_0 \otimes X_2 \otimes X_3) (T^{(3)}_{\calQ_n}))$. Similarly, write $Z$ as sum of simple tensors $e_{i_0} \ootimes e_{i_3}$, and observe that either one of the indices $i_2,i_3$ equals $0$, or the summand has the form $e_1 \otimes w \otimes v^{(2)}_j \otimes v^{(3)}_j$ arising from the restriction of the summand $y \otimes 1^* \otimes x_j^* \otimes x_j^*$. In both cases, the summand is mapped to zero by $ e_0^{(0)*} \otimes e_1^{(1) *} \otimes e_1^{(2) *} \otimes e_1^{(3) *} $. The same argument applies to the other two summands in the expression of $\bfT_{T} \calD_n$.

This shows that $\bfT_{T} \calD_n \subseteq \langle e_0^{(0)} \otimes e_1^{(1) *} \otimes e_1^{(2) *} \otimes e_1^{(3) *} \rangle^\perp$, so in particular $\dim \calD_n \leq 15$ for every $n$. On the other hand, a direct computation for $n = 2$ shows that for a generic $T \in \calD_2$, we have $\dim \bfT_{T}\calD_2 = 15$. We deduce that $\calD_n$ is the same hypersurface in $\bbC^2 \ootimes \bbC^2$ for every $n \geq 2$. Moreover, the generic tangent hyperplane, regarded as an element of $\bbP  (\bbC^2 \ootimes \bbC^2)^*$ is a rank one tensor in $\bbP  (\bbC^2 \ootimes \bbC^2)^*$. By genericity, and since $\calD_n$ is closed under the action of $\GL_2 \ttimes \GL_2$, we obtain that the dual variety $\calD_n^\vee$ is the Segre variety $\bbP (\bbC^2)^* \ttimes \bbP (\bbC^2)^*$ in $\bbP(\bbC^2 \ootimes \bbC^2)^* $; the Biduality Theorem \cite[Ch. 1, Thm.~1.1]{GKZ}, guarantees that $\calD_n$ is the dual variety of such Segre variety, which is, by definition, the hypersurface cut out by the hyperdeterminant. 

To conclude that $\uQ(T_{\calQ_n}^{(3)}) \leq 2$, it suffices to prove that $\calD_n$ does not contain all tensors of border rank $3$ in $\bbC^2 \otimes \bbC^2 \otimes \bbC^2 \otimes \bbC^2$. Consider the tensor 
\[
T = e_0^{\otimes 4} + e_1^{\otimes 4} + (e_0+e_1)^{\otimes 3} \otimes (e_0 - e_1).
\]
One can evaluate the hyperdeterminant on $T$ explicitly using Schl\"afli's method, see \cite[Sec. 14.4]{GKZ}, and verify that it does not vanish on $T$. Therefore $T$ is not a degeneration of $T_{\calQ_n}^{(3)}$, showing $\uQ(T^{(3)}_{\calQ_n}) = 2$ for $n \geq 2$.
\end{proof}

We give, for $k=2$, a geometric characterization of the variety of degenerations of $T^{(2)}_{\calQ_n}$ to $\bbC^3 \otimes \bbC^3 \otimes \bbC^3$, similar to the one given in the proof of \cref{border subrank CW k3} in the case $k=3$.
\begin{proposition}
    For every $n \geq 2$, define
    \[
   \calC_n = \{ T \in \bbC^3 \otimes \bbC^3 \otimes \bbC^3: T \text{ is a degeneration of } T^{(2)}_{\calQ_n}\}.
    \]
    If $n \geq 3$, then $\calC_n$ does not depend on $n$ and it is the variety of degenerations of the matrix multiplication tensor $\MaMu_{(2,2,2)}$. In particular $\dim \calC_n = 25$ for all $n \geq 3$.
\end{proposition}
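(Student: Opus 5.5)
The plan is to realize $\calC_n$ as the closure of the image of an irreducible variety, and to compare three irreducible varieties: $\calC_3$, $\calC_n$ for $n \geq 3$, and the variety $\calM$ of degenerations of $\MaMu_{(2,2,2)}$ in $\bbC^3 \otimes \bbC^3 \otimes \bbC^3$. Equality will come from inclusions together with a matching dimension count. As in the proof of \cref{border subrank CW k3}, $\calC_n$ is the closure of the image of the restriction map
\[
\Phi_n : \Hom(\calQ_n,\bbC^3) \times \Hom(\calQ_n^*,\bbC^3)^{\times 2} \to \bbC^3 \otimes \bbC^3 \otimes \bbC^3 .
\]
The reason is that degenerating into a smaller space amounts to the closure of restrictions, as in \cref{thm: degenerations via grassmannians}. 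In particular $\calC_n$ is irreducible. Since $T^{(2)}_{\calQ_n}$ is a restriction of $T^{(2)}_{\calQ_{n+1}}$, we have $\calC_n \subseteq \calC_{n+1}$. The same argument shows that $\calM$ is irreducible.

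\textbf{Step 1: uniform upper bound $\dim \calC_n \leq 25$.} Write a generic restriction with $X_0(1) = e_0$, $X_0(y) = e_1$, $X_i(1^*) = e_0$, $X_i(y^*) = e_1$, and generic images $v^{(i)}_j$ of $x_j$ and $x_j^*$. The restricted tensor is then
\[
e_0 \otimes e_0 \otimes e_0 + (\text{terms through } y) + \textstyle\sum_j \bigl(v^{(0)}_j \otimes e_0 \otimes v^{(2)}_j + v^{(0)}_j \otimes v^{(1)}_j \otimes e_0\bigr) + e_1 \otimes \textstyle\sum_j v^{(1)}_j \otimes v^{(2)}_j .
\]
Its dependence on the $v$'s is only through the three $3\times 3$ matrices $V_0V_2^T$, $V_0V_1^T$ and $V_1V_2^T$. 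As in \cref{border subrank CW k3}, the tangent space $\bfT_T\calC_n$ is the sum of the four Leibniz summands of $\rmd\Phi_n$. I will exhibit two linearly independent linear forms on $\bbC^3\otimes\bbC^3\otimes\bbC^3$ annihilating all of these summands for every $n$. They should be built from the fact that $\frakm^3 = 0$ and that every term with two factors from $\frakm$ lands on $e_1$ in the factor $0$. This gives $\dim \calC_n \leq 25$ independently of $n$.

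\textbf{Step 2: identify $\calM$ and prove $\dim \calM = 25$.} I would use the decomposition $\Mat_2 = \langle 1 \rangle \oplus \fraksl_2$, so that $ab = \tfrac12\mathrm{tr}(ab)\,1 + (\text{the }\fraksl_2\text{-part}) $, where the trace form on $\fraksl_2$ is a nondegenerate quadric just as in $\calQ_3$. A generic restriction of $\MaMu_{(2,2,2)}$ to $\bbC^3$ should then be compared with $\Phi_3$. Concretely, I will look for explicit one-parameter families showing that a generic point of $\calC_3$ lies in $\calM$, and conversely that a generic point of $\calM$ lies in $\calC_3$. The main difficulty is the commutator term $[a,b]$, which is absent in $\calQ_3$; I expect it can be scaled away or reabsorbed after projecting to $3$-dimensional quotients. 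The dimension $\dim\calM = 25$ I will obtain by a direct rank computation of $\rmd\Phi$ at a random point, in the same way the paper computes $\dim \bfT_T\calD_2 = 15$.

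\textbf{Conclusion and main obstacle.} From Step 2, $\calM \subseteq \calC_3 \subseteq \calC_n$. Combined with Step 1, all three are irreducible of dimension $25$, hence equal. The step I expect to be the main obstacle is Step 2, namely the two mutual inclusions between $\calC_3$ and $\calM$. If explicit degenerations prove elusive, I would fall back on showing a single inclusion $\calM \subseteq \calC_3$, for instance by proving that the generic restriction of $\MaMu_{(2,2,2)}$ lies in the image of $\Phi_3$ up to $\GL_3^{\times 3}$. Together with the dimension bound, that single inclusion already forces equality.
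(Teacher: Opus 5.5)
\textbf{What works.} The skeleton is sound: $\calC_n$ and $\calM$ are irreducible, and one inclusion plus matching dimensions forces equality. Step 1 can also be completed. With your normalization, trilinear forms in $\langle e_0^*,e_2^*\rangle\otimes\langle e_1^*,e_2^*\rangle\otimes\langle e_1^*,e_2^*\rangle$ pull back to $\langle y\rangle^\perp\otimes\frakm\otimes\frakm$. Killing the three Leibniz summands then amounts to six linear conditions on this $8$-dimensional space: contractions against the appropriate $2\times 2$ blocks of $\sum_j v^{(1)}_j\otimes v^{(2)}_j$, $\sum_j v^{(0)}_j\otimes v^{(2)}_j$ and $\sum_j v^{(0)}_j\otimes v^{(1)}_j$. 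So at least two independent annihilating forms survive, and $\dim\calC_n\leq 25$ for all $n$ without any input on $\calM$. Unlike the $k=3$ case, these forms depend on the point, because $\frakm^2\neq 0$.

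\textbf{The gap: Step 2.} Step 2 carries the entire identification with $\calM$, and you only state it as an expectation. No degeneration exists between the two tensors in either direction. Degenerations of $\MaMu_{(2,2,2)}$ have flattening ranks at most $4$, while $T^{(2)}_{\calQ_3}$ is concise in $(\bbC^5)^{\otimes 3}$. In the other direction, $\uR(T^{(2)}_{\calQ_3})=5<7=\uR(\MaMu_{(2,2,2)})$. So the comparison has to happen entirely inside $\bbC^3\otimes\bbC^3\otimes\bbC^3$.

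Using the sandwich symmetry, a generic restriction of $\MaMu_{(2,2,2)}$ there is, up to isomorphism, the map $\fraksl_2\times\fraksl_2\to\fraksl_2$ given by $(a,b)\mapsto [a,b]+c\,\mathrm{tr}(ab)\,s$, with $s$ semisimple and $c$ a scalar. To prove $\calM\subseteq\calC_3$ you must realize this tensor, with its commutator term present, as a limit of restrictions of $T^{(2)}_{\calQ_3}$. A rescaling that suppresses the commutator only reaches smaller orbits inside $\calM$. Your fallback is the same unproved inclusion. As written, neither inclusion between $\calC_3$ and $\calM$ is established.

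\textbf{How the paper closes it.} The paper proves the opposite inclusion $\calC_n\subseteq\calM$ by a structural criterion. In the normal form of a generic $T\in\calC_n$, view the flattening $T(\bbC^{3*})$ as a $3\times 3$ matrix with entries in the $0$-th factor. It has a $2\times 2$ block proportional to $e^{(0)}_0$, because products in $\frakm\cdot\frakm$ land in $\langle y\rangle$. Hence each determinantal cubic $\phi^{(i)}_T$ is reducible, and by \cite[Thm.~5.4]{BG26} this reducibility characterizes degenerations of $\MaMu_{(2,2,2)}$. The paper then combines $\dim\calC_3\geq 25$ (direct computation) with the irreducibility and $\dim\calM=25$ from \cite{BerDLaGes}.

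To finish your route, you need either this characterization or an explicit restriction of $T^{(2)}_{\calQ_3}$ onto the normal form above. Such a restriction must exist, since the image of $\Phi_3$ contains a dense open subset of $\calC_3=\calM$.
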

 \begin{proof}
    Given a tensor $T \in \bbC^3 \otimes \bbC^3 \otimes \bbC^3$, we construct three cubic polynomials $\phi^{(0)}_T, \phi^{(1)}_T, \phi^{(2)}_T$ as follows. The $i$-th flattening $T_i : \bbC^3 \to \bbC^3 \otimes \bbC^3$ of $T$ can be regarded as a $3 \times 3$ matrix of linear forms in the $i$-th factor of $T$. The cubic $\phi^{(i)}_T$ is the determinant of such matrix. We prove that for every $T \in \calC_n$, the cubic polynomials $\phi^{(i)}_{T}$ are reducible. Using the results of \cite{BG26} this will provide the desired conclusion. A direct computation in the case $n = 3$ shows $\dim \calC_n \geq 25$ for every $n$.

 Let $T = (X_0 \otimes X_1 \otimes X_2)(T^{(2)}_{\calQ_n})$ be a generic element. After possibly acting with $\GL_3 \times \GL_3 \times \GL_3$, we may assume 
    \begin{align*}
\begin{array}{rlcl}
    X_0 : & y & \mapsto &e^{(0)}_0 \\
          & 1 & \mapsto &e^{(0)}_1 \\
          & x_j & \mapsto & v^{(0)}_j \\ 
    \end{array}
\qquad \begin{array}{rlcl}
    X_i : & 1^* & \mapsto &e^{(i)}_0 \\
          & y^* & \mapsto &e^{(i)}_1 \\
          & x_j^* & \mapsto &v^{(i)}_j
    \end{array} \text{ for $i = 1,2$}
\end{align*}
for generic elements $v^{(i)}_j \in \bbC^3$. Regarding the image of the flattening $T^{(2)}_{\calQ_n}(\calQ_n^*) \subseteq \calQ_n^* \otimes \calQ_n^*$ as a $(n+2) \times (n+2)$ matrix of elements in $\calQ_n$, we have  
\[
T^{(2)}_{\calQ_n}(\calQ_n^*) = \left(\begin{array}{ccccc}
1 & x_1 & \cdots & x_n & y \\ 
x_1 & y & \\
\vdots & & \ddots & \\
x_n  & & & y & \\
y & & & & 
\end{array} \right).
 \]
As a consequence, the image of the flattening of $T$ has the form 
\[
T({\bbC^3}^*) = \left(
\begin{array}{ccc}
z_{00} & z_{01} & z_{02} \\ 
z_{10} & \zeta_{11} e^{(0)}_{0} & \zeta_{12} e^{(0)}_{0} \\
z_{20} & \zeta_{21} e^{(0)}_{0} & \zeta_{22} e^{(0)}_{0} 
\end{array}\right)
\]
for certain vectors $z_{ij} \in \bbC^3$ and certain scalars $\zeta_{ij} \in \bbC$. This shows $\phi^{(0)}_T$ is a multiple of $e^{(0)}_{0}$. In a similar way, we obtain $\phi^{(1)}_T,\phi^{(2)}_T$ are reducible as well. By the characterization of the degenerations of the matrix multiplication tensor $\MaMu_{(2,2,2)}$ in  \cite[Thm. 5.4]{BG26}, we deduce 
\[
\calC_n \subseteq \calM := \{ T \in \bbC^3 \otimes \bbC^3 \otimes \bbC^3 : T \text{ is a degeneration of } \MaMu_{(2,2,2)}\}.
\]
Moreover, by the results of \cite[Sec.~5.1]{BerDLaGes}, we have that $\calM$ is irreducible of dimension $25$, hence $\calC_n = \calM$ for every $n \geq 3$. 
\end{proof}

\begin{proof}[Proof of \cref{thm: subrank higher CW}]
    The statements for $k = 2,3$ follow from \cref{prop: border subrank CW k2} and \cref{border subrank CW k3} respectively. The statement for $ k\geq 4$ follows from \cref{prop: socle prop} since the socle degree of $\calQ_n$ is $2$ for every $n \geq 1$ and $\frakm^2 = \langle y \rangle$ is $1$-dimensional.
\end{proof}

\section{Iterated matrix multiplication}\label{sec: MaMu}

Fix $n$ and consider the algebra $\calA = \Mat_{n\times n}$ of $n \times n$ matrices. Its structure tensors are the (iterated) matrix multiplication tensors. We describe the tensor in the more general setting of iterated multiplication of (possibly rectangular) matrices. In this context, let $\bfn = (n_0 \vvirg n_k)$ be a $(k+1)$-tuple of nonnegative integers and consider the iterated matrix multiplication tensor $\MaMu_\bfn$ defining the multilinear map 
\begin{align*}
 \MaMu_{\bfn} : \Mat_{n_0 \times n_1 } \ttimes \Mat_{n_{k-1} \times n_k} &\to \Mat_{n_0 \times n_{k}} \\
 (X_1 \vvirg X_k) &\mapsto X_1 \cdots X_{k}.
\end{align*}
For $k=2$, border subrank and geometric rank of $\MaMu_{(n_0,n_1,n_2)}$ coincide: the lower bound on border subrank is proved in \cite{Strassen87}, and the upper bound on geometric rank is proved in \cite{KopMosZui:GeomRankSubrankMaMu}; when $n = n_0=n_1=n_2$, this value is $\uQ(\MaMu_{(n,n,n)}) = \GR(\MaMu_{(n,n,n)})=\left\lceil3n^2/4\right\rceil$. 
The main result of this section is the following.
\begin{theorem}\label{thm: main MaMu}
Let $k \geq 3$, $n \geq 2$ be integers. Let $q = \lfloor n/k\rfloor$ and $r = n - qk$. Then
\[
\frac{n^2}{k-1}\leq \uQ(\MaMu_{(n \vvirg n)}) \leq  \frac{1}{2} ( n^2 + nq + r(q+1))
\]
\end{theorem}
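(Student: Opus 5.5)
The two bounds are proved separately: the upper bound by computing (an upper bound on) the geometric rank and invoking \eqref{eqn: Q bounded by Grank}, and the lower bound by an explicit degeneration.

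\emph{Upper bound.} The plan is to exhibit one irreducible subvariety of
\[
\calZ(\MaMu_{(n \vvirg n)}) = \{(A_1 \vvirg A_k) : A_1 \cdots A_k = 0\}
\]
of small codimension. Since $\GR$ is the codimension of the whole variety, any subvariety contained in it bounds $\GR$ from above.

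1. Fix integers $0 = u_0 \leq u_1 \leq \cdots \leq u_{k-1} \leq u_k = n$ and a partial flag $U_1 \subseteq \cdots \subseteq U_{k-1}$ with $\dim U_i = u_i$. Set $U_0 = 0$ and $U_k = \bbC^n$.
2. Consider the tuples with $A_i(U_i) \subseteq U_{i-1}$ for all $i$. For these, $A_1 \cdots A_k$ maps $\bbC^n$ into $U_0 = 0$, so they lie in $\calZ$.
3. For a fixed flag, this is a linear space of codimension $\sum_{i=1}^k u_i(n-u_{i-1})$.
4. Let the flag vary. The flag variety has dimension at most $\sum_{i=1}^{k-1} u_i(n-u_i)$, so the union is an irreducible subvariety of $\calZ$ of codimension at least
\[
\sum_{i=1}^{k} u_i(n-u_{i-1}) - \sum_{i=1}^{k-1} u_i(n-u_i) = n(n-u_{k-1}) + \sum_{i=1}^{k-1} u_i(u_i-u_{i-1}).
\]
Since the union is contained in $\calZ$, $\GR$ is at most this codimension.
5. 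Choose the increments $u_i - u_{i-1}$ as equal as possible: $r$ of them equal to $q+1$ and $k-r$ equal to $q$, with the larger increments placed where they minimize the expression (I expect at the top, so that $n - u_{k-1} = q$ or $q+1$ as appropriate).
6. A direct computation then gives $\tfrac12(n^2 + nq + r(q+1))$. For $r=0$, i.e.\ $u_i = iq$ and $n = kq$, the expression equals $nq + q^2 k(k-1)/2 = \tfrac12(n^2+nq)$, which confirms the shape of the bound.

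Combining with \eqref{eqn: Q bounded by Grank} yields the upper bound on $\uQ$. Only containment in $\calZ$ is needed here, not that this subvariety is an actual component.

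\emph{Lower bound.} In the standard basis, the support of $\MaMu_{(n \vvirg n)}$ consists of the "cycles"
\[
\bigl((i_0,i_k),(i_0,i_1),(i_1,i_2) \vvirg (i_{k-1},i_k)\bigr),
\]
where the first coordinate refers to the output factor. I would follow Strassen's approach for $k=2$.

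1. Select a subset $\Phi$ of cycles of size about $n^2/(k-1)$ such that each coordinate (each matrix entry in each slot) determines the cycle uniquely. A concrete first try is to take $i_k = i_0$ and $i_j = i_0 + j s \bmod n$, with $s$ ranging in a window of length about $n/(k-1)$. Restricting to nearly that many cycles should make each edge $(i_j, i_{j+1})$ determine the pair $(i_0, s)$.
2. Restrict the tensor to the coordinates appearing in $\Phi$.
3. Find weight functions $w_0 \vvirg w_k$ on these coordinates, in the spirit of the $\eps$-scalings used in the proof of \cref{prop: lower bound QTRd}, with two properties: the total weight vanishes on the cycles in $\Phi$, and it is strictly positive on every other support element of the restricted tensor. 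Typically $w_j$ will be a convex (e.g.\ quadratic) function of the step $i_{j+1} - i_j$, centered in the window.
4. The limit $\eps \to 0$ of the correspondingly scaled tensor is then the unit tensor of size $|\Phi|$.

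\emph{Main obstacle.} I expect the hardest part to be this last step: controlling all spurious support elements after the restriction, i.e.\ cycles whose edges all lie in $\Phi$-coordinates but which are not themselves in $\Phi$. One must show that strict convexity forces the total weight to be positive on them, exactly as the AM--GM and triangle inequality argument did in \cref{prop: lower bound QTRd}. The arithmetic of wrap-around modulo $n$ is what restricts the window length to about $n/(k-1)$, and this is where the constant $\frac{1}{k-1}$ should come from.
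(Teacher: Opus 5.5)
\textbf{Upper bound.} Your strategy is the paper's: bound $\GR$ by the codimension of one component of $\calZ$, parametrized by an incidence correspondence. However, step~4 runs the inequality the wrong way. Write $E = n(n-u_{k-1})+\sum_{i=1}^{k-1}u_i(u_i-u_{i-1})$. An upper bound on the dimension of the parameter space only gives an upper bound on the dimension of the union, i.e.\ codimension $\ge E$. Combined with ``$\GR\le$ codimension of the union'', this yields nothing.

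To get $\GR\le E$ you need the union to have dimension at least $kn^2-E$, and this requires two facts you do not prove.
\begin{enumerate}
\item The parameter space must have dimension exactly $\sum_{i=1}^{k-1}u_i(n-u_i)$. This holds if each $U_i$ is chosen independently in the $i$-th intermediate copy of $\bbC^n$ (the source of $A_i$, which is the target of $A_{i+1}$), i.e.\ over a product of Grassmannians. It fails for a nested flag in a single $\bbC^n$, as you wrote it: for $n=k=3$ and $(u_1,u_2)=(1,2)$ the nested version has codimension $7$, whereas $E=6$.
\item The projection from the incidence variety onto the union must be generically finite. For instance, take the tuple $\bfD_\bfp$ of diagonal $0/1$ matrices with $p_i=u_i-u_{i-1}$; over it the conditions force $U_i=\langle e_1,\dots,e_{u_i}\rangle$, so that fiber is a point. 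This is exactly the birationality statement in \cref{thm: Z for MaMu}.
\end{enumerate}
Once repaired, your variety is the component $\calZ_\bfp$. You obtain it with one subspace per intermediate space instead of the paper's partial flags, which shortens the dimension count. Moreover $E=h_2(u_1-u_0,\dots,u_k-u_{k-1})$ is symmetric in the increments, so where you place the larger ones is irrelevant.

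\textbf{Lower bound.} Here the actual argument is missing.

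Your family $\Phi$ cannot work. Imposing $i_k=i_0$ already contradicts $i_j=i_0+js \bmod n$ at $j=k$ unless $ks\equiv 0 \pmod n$. More seriously, every selected cycle then has output coordinate $e_{i_0i_0}$, and there are only $n$ of these, so the restricted tensor has border subrank at most $n$.

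The natural repair is to use genuine progressions $i_j=a+js$ for $j=0,\dots,k$, with the Cauchy--Schwarz scalings $\eps^{-(b-a)^2}$ on the output factor and $\eps^{k(b-a)^2}$ on the input factors, as in the proof of \cref{thm: subrank triangular}. This does give a unit tensor, but only of size about $n^2/k$. Step~3, which you single out as the main obstacle, is left unresolved, and it is the whole content of the bound.

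The idea you are missing is to take the exponent to be $|c_0i_0+\cdots+c_ki_k-h|^2$ for vectors $c_0,\dots,c_k\in\bbZ^{k-1}$ with $\langle c_i,c_j\rangle=0$ whenever $i,j$ are non-adjacent in the cycle $C_{k+1}$.
\begin{itemize}
\item Orthogonality kills all cross terms between non-adjacent indices. So the quadratic form splits into pieces, each depending only on the index pair of one tensor factor, and it is realized by scaling each factor separately.
\item Nonnegativity is automatic, and the surviving terms are the solutions of a linear system.
\end{itemize}
With the paper's explicit $c_j$ and $h$, replace $i_\ell$ by $n-1-i_\ell$ for odd $\ell$, and set $q'=\lfloor (n-1)/(k-1)\rfloor$. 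The surviving cycles are then $i_\ell=i_0+\ell s$ for $\ell=0,\dots,k-1$, together with $i_k=s+q'$. Only $k-1$ steps must stay inside $\{0,\dots,n-1\}$, while the closing index $i_k$ records the step. Hence every coordinate, in particular the output coordinate $e_{i_0i_k}$, determines the cycle. This, not wrap-around modulo $n$, is where $\frac{1}{k-1}$ comes from: counting pairs $(i_0,i_k)$ gives $M(n,k)\ge n^2/(k-1)$.
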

The lower bound is obtained by providing an explicit degeneration to a unit tensor of the desired rank. The upper bound is obtained via geometric rank. The value of the upper bound $ \frac{1}{2} ( n^2 + nq + r(q+1))$ is bounded above by the value $\frac{k+1}{2k} n^2 + \frac{k}{8}$ mentioned in \cref{sec: intro}. 

Let $e_{ij}$ denote the element of $\Mat_{n_p \times n_{p+1}}$ having $1$ in the $(i,j)$-th entry and $0$ elsewhere. Let $e_{ij}^*$ denote the elements of the dual basis. Then, as a tensor in $\Mat_{n_0 \times n_k} \otimes \Mat_{n_0 \times n_1}^* \ootimes \Mat_{n_{k-1} \times n_k}^*$, we have
\[
\MaMu_{\bfn} = \sum_{i_0 \vvirg i_k } e_{i_0 i_k} \otimes e_{i_0i_1}^* \ootimes e_{i_{k-1} i_k}^*.
\]
The geometric rank is controlled by the variety 
\[
 \calZ = \{ (X_1 \vvirg X_{k}) : X_1 \cdots X_{k} = 0\} \subseteq  \Mat_{n_0 \times n_1} \ttimes \Mat_{n_{k-1} \times n_k}.
\]
Varieties of this form have been studied in the context of quiver representation theory \cite{AbDf:Deg_Am,AbDfKr:Geo_Am,Ges:Geometry_of_IMM}. Indeed, the $k$-tuple $(X_1 \vvirg X_{k})$ can be regarded as a representation of the equioriented Dynkin quiver 
\[
 \bfA_{k+1} : \begin{tikzpicture}
  \draw (0, 0) -- (1,0);
  \draw (.5+.1 , .1) -- (.5-.1,0);
  \draw (.5+.1 , -.1) -- (.5-.1,0);
\draw[dashed] (1, 0) -- (5,0);
\draw (5, 0) -- (6,0);
  \draw (5.5+.1 , .1) -- (5.5-.1,0);
  \draw (5.5+.1 , -.1) -- (5.5-.1,0);
\draw[fill=white] (0,0) circle(.17); 
\draw[fill=white] (1,0) circle(.17); 
\draw[fill=white] (5,0) circle(.17); 
\draw[fill=white] (6,0) circle(.17); 
\end{tikzpicture}
\]
with dimension vector $\bfn = (n_0 \vvirg n_k)$; the condition $X_1 \cdots X_k = 0$ is equivalent to the fact that the representation is \emph{bound} by the principal ideal generated by the full path. We refer to \cite{DerWey:IntroQuiverRepn} for an extensive treatment of quiver representation theory and to \cite[Sec.~4]{Ges:Geometry_of_IMM} for a brief introduction closely related to the setting of interest.

In our setting, consider vector spaces $U_0 \vvirg U_k$ with $\dim U_j = n_j$ and regard a matrix $X_j \in \Mat_{n_j \times n_{j+1}} \simeq U_{j+1}^* \otimes U_j$ as a linear map $U_{j+1} \to U_j$. Then, a $k$-tuple of linear maps defines a composition  
\[
\xymatrix{
U_0 & U_1 \ar[l]^{X_1} &\ar[l] \cdots & U_{k-1} \ar[l] & U_k \ar[l]^{\ \ \ \ X_{k}}
}
\]
and $(X_1 \vvirg X_{k}) \in \calZ$ if and only if the composition $X_1 \cdots X_k : U_k \to U_0$ is identically $0$.

Representations of the equioriented quiver $\bfA_{k+1}$ decompose into indecomposable representations. By Gabriel's Theorem \cite{Gabriel-quivers}, these are finitely many and in one-to-one correspondence with paths in $\bfA_{k+1}$; a path is uniquely determined by the choice of two nodes $i,j$. In this case the corresponding indecomposable $E_{i,j}$ has dimension vector $(0 \vvirg 0, \underset{i}{1} \vvirg \underset{j}{1} , 0 \vvirg 0)$ and the corresponding maps are nonzero between $1$-dimensional spaces and identically $0$ otherwise. The following immediate result characterizes quiver representations occurring in $\calZ$.

\begin{lemma}
  Let $\bfX = (X_1 \vvirg X_{k})$ be a representation of $\bfA_{k+1}$ with dimension vector $\bfn$. Then $\bfX \in \calZ$ if and only if the indecomposable $E_{0,k}$ does not occur in the decomposition of $\bfX$ into indecomposable representations.
\end{lemma}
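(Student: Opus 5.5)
The plan is to compute the composition map $X_1 \cdots X_k : U_k \to U_0$ directly from a decomposition of $\bfX$ into indecomposables. Since composition is compatible with direct sums, this reduces the claim to a check on each indecomposable summand.

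First, by Gabriel's Theorem we write $\bfX \simeq \bigoplus_{i \le j} E_{i,j}^{\oplus m_{ij}}$. Concretely, there are isomorphisms $g_p \in \GL(U_p)$ such that, after the base change $X_p \mapsto g_{p-1} X_p g_p^{-1}$, every map $X_p$ is block diagonal with respect to the decompositions of the $U_p$ induced by the summands. The property of lying in $\calZ$ is invariant under this base change, because the composition becomes $g_0 (X_1 \cdots X_k) g_k^{-1}$, which vanishes if and only if $X_1 \cdots X_k$ does. So we may assume $\bfX$ is literally a direct sum.

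Next, the composition $X_1 \cdots X_k : U_k \to U_0$ is the direct sum of the compositions computed on each summand. On $E_{i,j}$ the composition $U_k \to U_0$ is nonzero if and only if both $(E_{i,j})_0$ and $(E_{i,j})_k$ are nonzero and every intermediate map is nonzero. This happens exactly when $i = 0$ and $j = k$. In that case each map is an isomorphism between $1$-dimensional spaces, so the composition is nonzero. For every other $(i,j)$, either the source $(E_{i,j})_k = 0$ or the target $(E_{i,j})_0 = 0$, so the composition is zero.

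Therefore $X_1 \cdots X_k = 0$ if and only if $m_{0k} = 0$. In fact, the same computation shows $\rank(X_1 \cdots X_k) = m_{0k}$, and this identity is the form likely to be reused later. There is no real obstacle; the only point needing care is the invariance of $\calZ$ under the $\prod_p \GL(U_p)$ base change.
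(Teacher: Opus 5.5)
Your proof is correct and follows the same approach as the paper. The paper gives no written proof and calls the lemma immediate; your argument supplies that routine check: pass to a direct sum of the $E_{i,j}$ using the $\calG_{\bfA_{k+1}}$-action, which preserves $\calZ$, and note that the full composition $U_k \to U_0$ is nonzero only on copies of $E_{0,k}$. Your refinement $\rank(X_1 \cdots X_k) = m_{0k}$ is also correct.
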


Orbits of representations under the action of the subgroup $\calG_{\bfA_{k+1}} = \GL(U_0) \ttimes \GL(U_k)$ are described in \cite{AbDf:Deg_Am,AbDfKr:Geo_Am}. The orbits are uniquely determined by the decomposition into indecomposable. Combinatorially, they can be described as diagrams of boxes with at most $k+1$ columns where each row corresponds to an indecomposable representation. Two diagrams are identified if they are the same up to permutation of rows. There is a combinatorial criterion for the inclusion of an orbit in the closure of another, which can be used to characterize the maximal orbit-closures for which the indecomposable $E_{0,k}$ does not appear: these are the irreducible components of $\calZ$.

We restrict to the case $n_0 = \cdots = n_k = n$. In this case, the irreducible components of $\calZ$ are in one-to-one correspondence with the $k$-tuples $(p_1 \vvirg p_{k})$ such that $p_1 + \cdots + p_k = n$, that is compositions of $n$ into $k$ summands. The value of $p_j$ determines the dimension of the kernel of the map $X_j : U_j \to U_{j-1}$. Write $\calZ_\bfp$ for the irreducible component determined by $\bfp$. Explicitly, $\calZ_\bfp$ is the orbit-closure, under the action of $\calG_{\bfA_{k+1}}$, of the $k$-tuple of matrices $\bfD_\bfp = (D_1 \vvirg D_{k})$, where $D_j$ is a diagonal matrix such that all entries on the diagonal of $D_j$ are one except for the entries ${D_j}_{ii}$ with $p_1 + \cdots + p_{j-1}< i \leq p_1 + \cdots + p_{j}$, which are zero.

By \cite[Prop.~2.7]{AbDf:Deg_Am}, we have the following result.
\begin{proposition}\label{prop: mamu quiver comps}
The irreducible components of $\calZ$ are exactly the varieties 
\[
\calZ_\bfp = \bar{\calG_{\bfA_{k+1}} \cdot \bfD_\bfp}
\]
for every composition $\bfp$ of $n$ into $k$ summands.
\end{proposition}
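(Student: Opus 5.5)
The plan is to use that $\bfA_{k+1}$ is of finite representation type, so $\calZ$ is a finite union of $\calG_{\bfA_{k+1}}$-orbits. Its irreducible components are therefore the closures of the orbits that are maximal in $\calZ$ with respect to degeneration. For the degeneration order I would use the rank criterion of Abeasis--Del Fra \cite{AbDf:Deg_Am}, which I take as known. For representations $\bfX,\bfY$ of the equioriented quiver with the same dimension vector, $\bfY \in \bar{\calG_{\bfA_{k+1}}\cdot \bfX}$ if and only if
\[
r_{a,b}(\bfY) \leq r_{a,b}(\bfX) \quad \text{for all } 0 \leq a < b \leq k,
\]
where $r_{a,b}(\bfX) = \rank(X_{a+1} \cdots X_b)$. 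I also set $r_{a,a} = n$.

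\textbf{Step 1: the ranks of $\bfD_\bfp$.} The kernels of the diagonal matrices $D_j$ are spanned by pairwise disjoint blocks of coordinates. Hence $X_{a+1}\cdots X_b$ is diagonal with zeros exactly on the blocks $a<j\leq b$, so
\[
r_{a,b}(\bfD_\bfp) = n - (p_{a+1} + \cdots + p_b).
\]
In particular $r_{0,k}(\bfD_\bfp)=0$. Thus $\bfD_\bfp \in \calZ$, and since $\calZ$ is closed, $\calZ_\bfp \subseteq \calZ$.

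\textbf{Step 2: every $\bfX \in \calZ$ lies in some $\calZ_\bfp$.} This is the main step, and the difficulty is choosing $\bfp$. I would take the drops of the left partial products,
\[
p_j = \rank(X_1\cdots X_{j-1}) - \rank(X_1 \cdots X_j) \geq 0, \qquad j = 1 \vvirg k,
\]
with $\rank$ of the empty product equal to $n$. These telescope to $n - \rank(X_1\cdots X_k) = n$, because $\bfX \in \calZ$, so $\bfp$ is a composition of $n$ into $k$ (nonnegative) summands. By the rank criterion and Step 1, $\bfX \in \calZ_\bfp$ amounts to
\[
\rank(X_{a+1}\cdots X_b) \leq n - \rank(X_1\cdots X_a) + \rank(X_1 \cdots X_b)
\]
for all $a<b$. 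This is exactly Sylvester's rank inequality $\rank(AB) \geq \rank A + \rank B - n$, applied to $A = X_1\cdots X_a$ and $B = X_{a+1}\cdots X_b$. Hence $\calZ = \bigcup_\bfp \calZ_\bfp$.

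\textbf{Step 3: the $\calZ_\bfp$ are pairwise incomparable.} Suppose $\calZ_\bfp \subseteq \calZ_{\bfp'}$. The rank criterion on single arrows ($b=a+1$) gives $n-p_j \leq n-p'_j$, that is $p_j \geq p'_j$ for every $j$. Both compositions sum to $n$, so $\bfp = \bfp'$. Each $\calZ_\bfp$ is irreducible, being an orbit closure of a connected group. Together with Step 2 this shows that the $\calZ_\bfp$ are exactly the irreducible components of $\calZ$.

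I expect Step 2 to be the only nonformal point, namely finding the right $\bfp$ for an arbitrary $\bfX\in\calZ$. Once the left-product rank drops are chosen, the verification reduces to Sylvester's inequality. If the Abeasis--Del Fra criterion were not available in rank form, I would instead argue through the decomposition of $\bfD_\bfp \simeq \bigoplus_j \bigl(E_{0,j-1}\oplus E_{j,k}\bigr)^{\oplus p_j}$ and its combinatorial box-diagram criterion. That route is more cumbersome.
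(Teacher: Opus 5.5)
Your proof is correct, and it takes a different route from the paper. The paper gives no argument at all and simply quotes \cite[Prop.~2.7]{AbDf:Deg_Am}. You instead use only the Abeasis--Del Fra description of orbit closures by rank inequalities on the composites $X_{a+1}\cdots X_b$, and reduce the statement to elementary linear algebra:
\begin{itemize}
\item Step~1 computes $\rank(D_{a+1}\cdots D_b)=n-(p_{a+1}+\cdots+p_b)$.
\item Step~2 assigns to each $\bfX\in\calZ$ the composition of left rank drops. Membership $\bfX\in\calZ_\bfp$ then becomes exactly Sylvester's inequality $\rank(AB)\geq \rank A+\rank B-n$ with $A=X_1\cdots X_a$ and $B=X_{a+1}\cdots X_b$.
\item Step~3 uses single-arrow ranks together with $\sum_j p_j=n$ to show that no $\calZ_\bfp$ is contained in another. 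Combined with irreducibility of orbit closures of the connected group $\calG_{\bfA_{k+1}}$, this identifies the $\calZ_\bfp$ as the components.
\end{itemize}

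Some minor remarks on the argument itself:
\begin{itemize}
\item Only Step~2 needs the nontrivial (``if'') direction of the rank criterion. Steps~1 and~3 use only lower semicontinuity of rank.
\item Your opening appeal to finite representation type is never actually used.
\item The inclusion $\calZ_\bfp\subseteq\calZ$ also uses that $\calZ$ is $\calG_{\bfA_{k+1}}$-stable, which is immediate.
\item You correctly allow zero parts in $\bfp$. This is needed, since for $n<k$ every composition has zero parts, and it matches how the paper later uses compositions with parts equal to $q=\lfloor n/k\rfloor$.
\end{itemize}

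What each route buys: yours makes clear exactly which piece of quiver theory is required. It also gives an explicit recipe for a component containing any given $\bfX$, in terms of the ranks of its left partial products. The paper's citation buys brevity.
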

Given a composition $\bfp = (p_1 \vvirg p_{k})$ define the $k \times k$ \emph{kernel matrix}
\[
 P = \left( \begin{array}{cccccc}
 p_1 & 0 & & &  \\
 \vdots & p_2 & 0 & &  \\
 \vdots & & \ddots & \ddots  &  \\
 & p_{ij} & & p_{k-1} & 0  \\
 & & \cdots & \cdots & p_{k} \\
 \end{array}\right)
\]
where the entries above the diagonal are set to $0$, the diagonal entries are $p_{ii} = p_{i}$ and the entries below the diagonal are defined as 
\[
p_{ij} = p_{i-1,j} + p_{i,j+1}-p_{i-1,j+1} \qquad \text{for } i > j.
\]
An elementary induction argument shows 
\begin{equation}\label{rmk: pij as sums}
p_{ij} = p_{j} +  \cdots + p_{i} \text{ for } i \geq j.
\end{equation}
Formally set $p_{i,0} = n$ for every $i$. Note that if $\bfX \in \calG_{\bfA_{k+1}} \cdot \bfD_\bfp \subseteq \calZ_\bfp$, then 
\[
p_{ij} = \dim \ker (X_j \cdots X_{i}: U_i \to U_{j-1}).
\]
To compute the dimension of $\calZ_\bfp$, we realize $\calZ_\bfp$ as the image of an incidence correspondence. This construction appears in \cite{AbDfKr:Geo_Am} and we recall it here with more details. For $j =0 \vvirg k$, consider the partial flag variety on $U_j$ defined by the sequence of integers $0 = p_{j,j+1}, p_{j,j} \vvirg p_{j,1}, p_{j,0} = n$:
\[
\Flag^{(j)} =  \Flag( p_{j,j} \vvirg p_{j,1}; U_j) = \{ F^{(j)}_\bullet = (0 \subseteq F^{(j)}_j \subseteq \cdots \subseteq F^{(j)}_1 \subseteq U_j ): \dim F^{(j)}_{j'} =  p_{j,j'}\}.
\]
If $j = 0$, $\Flag^{(j)}$ is a single point, identified with the trivial flag $(0 \subseteq U_0)$ in $U_0$. Define an incidence correspondence 
\[
\begin{tikzpicture}
 \node () at (2,2) {$\calI_\bfp = \Bigl\{ (\bfF , \bfX) \in \bigl(\bigtimes_{i=0}^{k}  \Flag^{(i)}\bigr) \times \bigl(\bigtimes_{j= 1}^{k}\Hom( U_{j} , U_{j-1}) \bigr) :  X_j ( F^{(j)}_{j'} ) \subseteq F^{(j-1)}_{j'} \Bigr\} $};
 \node () at (-2,0) {$ \bigtimes_{i=0}^{k}  \Flag^{(i)}  $};
 \node () at (6,0) {$\bigtimes_{j= 1}^{k}\Hom( U_{j} , U_{j-1})  $};
 \draw[->] (1,1.5)--(-1.2,0.5);
 \draw[->] (3,1.5)--(5.2,0.5);
 \node () at (-.8,1) {$\pi_\mathrm{\calF}$};
 \node () at (4.6,1) {$\pi_\mathrm{Hom}$};
 \draw[->] (3,1.5)--(5.2,0.5);
\end{tikzpicture}
\]
Explicitly, if $ (\bfF , \bfX) \in \calI_\bfp$, the map $X_j : U_{j} \to U_{j-1}$ maps the spaces of the flag $F^{(j)}_\bullet$ to the ones of the flag $F^{(j-1)}_\bullet$ as follows.
\[
 \begin{tikzpicture}
\node () at (0,5) {$X_j :$} ;
\node () at (1,5) {$U _{j}$}  ;
\node () at (4,5) {$U _{j-1}$}  ;

\node () at (1,4) {$F^{(j)}_{1}$}  ;
\node () at (1,3) {$F^{(j)}_{2}$}  ;
\node () at (1,2) {$\vdots$}  ;
\node () at (1,1) {$F^{(j)}_{j}$}  ;
\node () at (.6,0) {$0=F^{(j)}_{j+1}$}  ;
\node () at (4,3) {$F^{(j-1)}_{1}$}  ;
\node () at (4,2) {$F^{(j-1)}_{2}$}  ;
\node () at (4,1) {$\vdots$}  ;
\node () at (4.3,0) {$F^{(j-1)}_{j} =0$}  ;

 \draw[->] (1.5,5)--(3.4,5);
 \draw[->] (1.5,3.8)--(3.4,3);
 \draw[->] (1.5,2.8)--(3.4,2);
 \draw[->] (1.5,0.8)--(3.4,0);
 \end{tikzpicture}
\]
Using this construction, we prove the following result:
\begin{theorem}\label{thm: Z for MaMu}
The projection $\pi _\mathrm{Hom}: \calI_{\bfp} \to \bigtimes_{j=1}^{k} \Hom( U_{j} , U_{j-1})$ is birational onto $\calZ_\bfp$. In particular, for every composition $\bfp$ of $n$ into $k$ summands, we have $\dim \calI_{\bfp} = \dim \calZ_\bfp$ and 
\[
 \codim \calZ_\bfp = \sum_{i\leq j} p_ip_j.
\]
\end{theorem}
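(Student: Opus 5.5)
The plan is to compute $\dim \calI_\bfp$ directly, show that $\pi_{\mathrm{Hom}}$ maps $\calI_\bfp$ onto $\calZ_\bfp$ with a generic fiber that is a single point, and then subtract from $kn^2$.

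\emph{Step 1: $\calI_\bfp$ is irreducible and its dimension.} The projection $\pi_{\calF}$ exhibits $\calI_\bfp$ as a vector bundle over $\bigtimes_{i=0}^k \Flag^{(i)}$. The fiber over a tuple of flags $\bfF$ is the linear space of $k$-tuples $(X_1 \vvirg X_k)$ with $X_j(F^{(j)}_{j'}) \subseteq F^{(j-1)}_{j'}$ for all $j'$. Hence $\calI_\bfp$ is smooth and irreducible, and its dimension is the dimension of the base plus the rank of the bundle. I will fix bases of the $U_j$ adapted to the flags. Then the fiber for $X_j$ is the space of block lower-triangular maps, so its dimension is
\[
\textstyle \sum_{a=0}^{j} \dim\bigl(F^{(j)}_a/F^{(j)}_{a+1}\bigr)\cdot \dim F^{(j-1)}_a ,
\]
with the conventions $F^{(j)}_0 = U_j$ and $F^{(j-1)}_0 = U_{j-1}$. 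The flag varieties have the standard dimensions $\sum_{a<b} \dim(\mathrm{gr}_a)\dim(\mathrm{gr}_b)$. In both expressions every graded dimension is a difference $p_{j,a} - p_{j,a+1}$, and by \eqref{rmk: pij as sums} these are single partial sums of $\bfp$.

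\emph{Step 2: the image is $\calZ_\bfp$ and the generic fiber is a point.} Since flag varieties are projective, $\pi_{\mathrm{Hom}}(\calI_\bfp)$ is closed and irreducible. Every element of the image satisfies $X_1 \cdots X_k(U_k) \subseteq F^{(0)}_k = 0$, so the image lies in $\calZ$. The set $\calI_\bfp$ is stable under $\calG_{\bfA_{k+1}}$ acting diagonally. I check that $\bfD_\bfp$ lies in the image by taking the flags $F^{(j)}_{j'} = \ker(D_{j'} \cdots D_j)$; since these are coordinate subspaces, the required inclusions are immediate. It follows that $\calZ_\bfp = \bar{\calG_{\bfA_{k+1}} \cdot \bfD_\bfp} \subseteq \pi_{\mathrm{Hom}}(\calI_\bfp)$.

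For the reverse containment and birationality, I will show that over any $\bfX$ in the orbit of $\bfD_\bfp$ the fiber is exactly one point. The flag conditions give $X_{j'} \cdots X_j (F^{(j)}_{j'}) \subseteq F^{(j'-1)}_{j'} = 0$, so $F^{(j)}_{j'} \subseteq \ker(X_{j'}\cdots X_j)$. Both spaces have dimension $p_{j,j'}$, so they are equal and the flags are forced. Consequently $\dim \calZ_\bfp \ge \dim \calI_\bfp$ by upper semicontinuity of fiber dimension on the dense open set $\pi_{\mathrm{Hom}}^{-1}(\calG_{\bfA_{k+1}}\cdot\bfD_\bfp)$. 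On the other hand, $\dim \pi_{\mathrm{Hom}}(\calI_\bfp) \le \dim \calI_\bfp$. Combining these with $\calZ_\bfp \subseteq \pi_{\mathrm{Hom}}(\calI_\bfp)$ and irreducibility forces equality: the image is $\calZ_\bfp$, and $\pi_{\mathrm{Hom}}$ is generically injective. In characteristic $0$ this gives birationality.

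\emph{Step 3: the codimension count.} It remains to verify the identity
\[
\textstyle kn^2 - \dim \calI_\bfp = \sum_{i\le j} p_ip_j .
\]
My approach is to express every quantity in terms of $p_1 \vvirg p_k$ and organise the sum by pairs of indices. Two consistency checks are available. For $k=2$ the formula should give the codimension $p_1^2 + p_1p_2 + p_2^2$ of $\{XY = 0,\ \dim\ker X = p_1,\ \dim \ker Y = p_2\}$, which recovers $3n^2/4$ at $p_1 = p_2 = n/2$. Alternatively, one can induct on $k$ by forgetting $X_k$ and $U_k$: this should add exactly $p_k(p_1+\cdots+p_k) = p_k n$ to the codimension.

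I expect this bookkeeping to be the main obstacle. The geometric steps are standard, but the telescoping of the flag dimensions against the fiber dimensions is easy to get wrong by an off-by-one in the indices $p_{j,j'}$. I would first carry it out carefully for $k=3$ and then write the general induction.
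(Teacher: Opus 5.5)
Your overall route is the paper's. The map $\pi_\calF$ makes $\calI_\bfp$ a vector bundle over the product of the flag varieties $\Flag^{(j)}$, $\pi_{\mathrm{Hom}}$ has a one-point fiber over the orbit of $\bfD_\bfp$, and the codimension comes from a dimension count. Your Step~2 is more explicit than the paper's, since you show the flags are forced to be $\ker(X_{j'}\cdots X_j)$.

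\textbf{The gap.} Step~3 carries the actual content $\codim\calZ_\bfp=\sum_{i\le j}p_ip_j$, and it is only announced: you give sanity checks but never verify the identity. The induction you sketch also does not apply as stated. After forgetting $X_k$ and $U_k$, the remaining data correspond to $(p_1\vvirg p_{k-1})$, which is a composition of $n-p_k$, not of $n$. So the inductive hypothesis would have to be a more general statement about $\dim\calI$ for tuples with $p_1+\cdots+p_m\le n$, not the theorem for $k-1$.

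\textbf{Closing it.} Your Step~1 setup finishes the count in a few lines. Put $s_j=p_1+\cdots+p_j$. The graded pieces of the flag on $U_j$ have dimensions $n-s_j,p_1\vvirg p_j$; these are single parts, not partial sums. Also $\dim F^{(j-1)}_a=p_a+\cdots+p_{j-1}$ for $1\le a\le j-1$. Hence
\[
\dim\Flag^{(j)}=s_j(n-s_j)+\sum_{1\le a<b\le j}p_ap_b,\qquad \dim L^{(j)}_\bfF=n(n-s_j)+\sum_{1\le a\le b\le j-1}p_ap_b .
\]
Therefore
\begin{align*}
n^2-\dim\Flag^{(j)}-\dim L^{(j)}_\bfF &= s_j^2-\sum_{1\le a<b\le j}p_ap_b-\sum_{1\le a\le b\le j-1}p_ap_b\\
&=\sum_{1\le a\le b\le j}p_ap_b-\sum_{1\le a\le b\le j-1}p_ap_b=p_js_j .
\end{align*}
Summing over $j=1\vvirg k$ gives $\codim\calZ_\bfp=\sum_j p_js_j=\sum_{i\le j}p_ip_j$. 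The per-step contribution $p_js_j$ is what your heuristic ``$+p_kn$'' was detecting. This bookkeeping is considerably shorter than the paper's manipulation of the $p_{j,i}$.

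\textbf{A smaller point in Step~2.} You call $\pi_{\mathrm{Hom}}^{-1}(\calG_{\bfA_{k+1}}\cdot\bfD_\bfp)$ open in $\calI_\bfp$ without justification. At that stage you only know the orbit is open in $\calZ_\bfp$, not in $\pi_{\mathrm{Hom}}(\calI_\bfp)$. The fix is \cref{prop: mamu quiver comps}: $\calZ_\bfp$ is an irreducible component of $\calZ$. You have already shown $\pi_{\mathrm{Hom}}(\calI_\bfp)$ is an irreducible closed subset of $\calZ$ containing $\calZ_\bfp$, so the two coincide. Then the preimage of the orbit is a nonempty open, hence dense, subset of $\calI_\bfp$ on which $\pi_{\mathrm{Hom}}$ is injective.

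Alternatively, $\rank(X_{j'}\cdots X_j)\le n-p_{j,j'}$ holds on all of $\calI_\bfp$. The orbit is the open locus where all these inequalities are equalities, since rank data classify representations of $\bfA_{k+1}$.
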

\begin{proof}
By construction the projection map $\pi_{\mathrm{Hom}}$ surjects onto $\calZ_\bfp$. To show that it is generically one-to-one it suffices to observe that the fiber over $\bfD_\bfp$ is a single point. Indeed, this is the point defined by the $k$-tuple of flags $\bfF \in \bigtimes _{j=0}^{k}  \Flag^{(j)}$ with $F^{(j)}_{j'} = \langle e_1 \vvirg e_{p_{j,j'}}\rangle$ for $j' = j , j-1 \vvirg 1$. Therefore $\dim \calZ_\bfp = \dim \calI_\bfp$.

In order to compute $\dim \calI_\bfp$, note that $\pi_\calF : \calI_\bfp \to \bigtimes_{j=0}^{k} \Flag^{(j)}$ is a vector bundle over the product $\bigtimes_{j=0}^{k} \Flag^{(j)}$. The fiber over $\bfF$ is the product of linear spaces $\calL_\bfF =  L^{(1)}_{\bfF} \ttimes  L^{(k)}_{\bfF}$ where 
\[
 L^{(j)}_{\bfF} = \{ X_j : U_{j} \to U_{j-1} : X_j ( F^{(j)}_{j'} ) \subseteq F^{(j-1)}_{j'} \text{ for } j' = 1 \vvirg j\}.
\]
    We record two dimension formulas that we will use to compute the dimension of $\calZ_\bfp$; we refer to \cite[Sec.~1.2]{BrionFlags} and \cite{AbDfKr:Geo_Am}. For fixed integers $0 = m_0 \leq \cdots \leq m_s = t$ the dimension of the flag variety $\Flag (m_0 \vvirg m_s, \bbC^t)$ in $\bbC^t$ is
\begin{equation}\label{rmk: dim flag}
 \dim \Flag(m_0 \vvirg m_s,\bbC^t) = \sum_{i=1}^{s-1} m_i (m_{i+1}-m_i).
\end{equation}
Moreover, given two flags 
\[ 
F_\bullet ^{1} \in \Flag(m_0^{(1)} \vvirg m_s^{(1)} ; \bbC^{t^{(1)}}) \quad \text{and} \quad F_\bullet ^{2} \in \Flag(m_0^{(2)} \vvirg m_{s-1}^{(2)} ; \bbC^{t^{(2)}}),
\]
with $m_0^{(1)} = m_0^{(2)} =  0$ and $m_{s}^{(1)} = t^{(1)}$, $m_{s-1}^{(2)} = t^{(2)}$, we have
\begin{equation}\label{rmk: dim maps preserving flag}
 \dim \{ X : \bbC^{t^{(1)}} \to \bbC^{t^{(2)}}  : X ( F_j^{1} ) \subseteq F_{j-1}^2\} = \sum_{i=1}^{s-1} (m^{(1)}_{i+1}-m_{i}^{(1)}) m^{(2)}_{i}.
\end{equation}

By the fiber dimension theorem, see, e.g. \cite[Cor. 11.13]{Harris:AlgGeo}, we have
\begin{align*}
\dim \calZ_\bfp = \dim \calI_\bfp &= \sum _0^{k} \dim \Flag^{(j)} + \dim \pi_\calF^{-1}(F^{(1)}_\bullet \vvirg F^{(k)}_\bullet).  
\end{align*}
where $\pi_\calF^{-1}(F^{(1)}_\bullet \vvirg F^{(k)}_\bullet)$ is the preimage of a generic element $(F^{(1)}_\bullet \vvirg F^{(k)}_\bullet)$ of $\bigtimes_{i=0}^{k}  \Flag^{(i)} $. From \eqref{rmk: dim flag}, we obtain the first summand
\[
 \sum _{j=0}^{k} \dim \Flag^{(j)} = \sum_{j=0}^k \Flag(\underbrace{p_{j,j+1}}_{=0},\underbrace{p_{jj}}_{=p_j} \vvirg p_{j1} , \underbrace{p_{j0}}_{=n}, \bbC^n) = \sum_{j=0}^k \sum_{i=1}^{j} p_{j,i}(p_{j,i-1}-p_{j,i}).
\]
For $j=0$ the inner summation is empty. Therefore, the range of the outer summation is $j=1 \vvirg k$ and shifting the indices we obtain
\begin{align*}
 \sum _{j=0}^{k} \dim \Flag^{(j)} &= \sum_{j=1}^{k}  \sum_{i=1}^{j} p_{j,i}(p_{j,i-1}-p_{j,i}) = \sum_{j=0}^{k-1}  \sum_{i=1}^{j+1} p_{j+1,i}(p_{j+1,i-1}-p_{j+1,i}) .
\end{align*}
Separate the term $i=1$ and shift the indices in the inner summation to obtain 
\begin{align*}
  \sum _{j=0}^{k} \dim \Flag^{(j)} = &\sum_{j=0}^{k-1} p_{j+1,1}(p_{j+1,0}-p_{j+1,1}) + \sum_{j=0}^{k-1} \sum_{i=2}^{j+1} p_{j+1,i}(p_{j+1,i-1}-p_{j+1,i}) = \\ 
  &\sum_{j=0}^{k-1} p_{j+1,1}(p_{j+1,0}-p_{j+1,1}) + \sum_{j=0}^{k-1} \sum_{i=1}^{j} p_{j+1,i+1}(p_{j+1,i}-p_{j+1,i+1}).
\end{align*}
Applying \eqref{rmk: pij as sums} and shifting the indices again, we obtain
 \begin{align*}
  \sum _{j=0}^{k} \dim \Flag^{(j)} & = \sum_{j=0}^{k-1} (p_1 + \cdots + p_{j+1})(n - (p_1 + \cdots + p_{j+1}) ) +  \sum_{j=0}^{k-1} \sum_{i=1}^{j} (p_{i+1} + \cdots + p_{j+1})p_i \\ 
 &=\sum_{j=1}^{k} n(p_1 + \cdots + p_j) - \sum_{j=1}^{k} (p_1 + \cdots + p_j)^2 + \sum_{j=1}^{k} \sum_{i=1}^{j-1} (p_{i+1} + \cdots + p_{j})p_i. 
\end{align*}
Now, applying \eqref{rmk: dim maps preserving flag} for the linear spaces $L^{(j)}_{\bfF}$, we compute $\dim \pi_\calF^{-1}(\bfF)$. We have 
\[
 \dim \pi_\calF^{-1}(\bfF) = \sum _{j=1}^{k} \dim L^{(j)}_{\bfF} =  \sum_{j=1}^{k} \sum_{i=0}^{j-1}  p_{j-1,i}(p_{j,i} - p_{j,i+1}).
\]
As before, separating the term $i = 0$ of the inner summation, observe that the remaining sum is empty for $j=1$ and obtain 
\begin{align*}
  \dim \pi_\calF^{-1}(\bfF) &= \sum_{j=1}^{k}  p_{j-1,0}(p_{j,0} - p_{j,1}) + \sum_{j=1}^{k} \sum_{i=1}^{j-1}  p_{j-1,i}(p_{j,i} - p_{j,i+1})= \\
			    &= \sum_{j=1}^{k} n(n - (p_{1} + \cdots + p_{j})) + \sum_{j=2}^{k} \sum_{i=1}^{j-1}  p_{j-1,i}(p_{j,i} - p_{j,i+1})=\\
			    &= k\cdot n^2 -  \sum_{j=1}^{k} n(p_{1} + \cdots + p_{j}) + \sum_{j=1}^{k-1} \sum_{i=1}^{j}  (p_i + \cdots + p_{j}) p_i .
\end{align*}
Adding the contributions together and passing to codimensions, we have
\begin{align*}
\codim \calZ_\bfp = &\sum_{j=1}^k \dim ( \Hom( U_j, U_{j-1}) ) - \left\{ \sum _{j=0}^{k} \dim \Flag^{(j)} + \dim \pi_\calF^{-1}(\bfF)\right\} = \\
&k \cdot n^2 - \\
&\left\{ \left[ \sum_{j=1}^{k} n(p_1 + \cdots + p_j) - \sum_{j=1}^{k} (p_1 + \cdots + p_j)^2 + \sum_{j=1}^{k} \sum_{i=1}^{j-1} (p_{i+1} + \cdots + p_{j})p_i \right]\right. \\
		  &+ \left.\left[ k\cdot n^2 -  \sum_{j=1}^{k} n(p_{1} + \cdots + p_{j}) + \sum_{j=1}^{k-1} \sum_{i=1}^{j}  (p_i + \cdots + p_{j}) p_i \right]\right\}.	  
\end{align*}
After the immediate cancellations of the terms $k \cdot n^2$ and $\sum_{j=1}^{k} n(p_1 + \cdots + p_j)$, we obtain
\begin{equation}\label{eqn: codim Z start}
 \codim \calZ_\bfp = \sum_{j=1}^{k} (p_1 + \cdots + p_j)^2 - \sum_{j=1}^{k} \sum_{i=1}^{j-1} (p_{i+1} + \cdots + p_{j})p_i - \sum_{j=1}^{k-1} \sum_{i=1}^{j}  (p_i + \cdots + p_{j}) p_i .
\end{equation}
Observe 
\begin{align*}
 \sum_{j=1}^{k} (p_1 + \cdots + p_j)^2 &=  \sum_{j=1}^{k-1} (p_1 + \cdots + p_j)^2 + (p_1 + \cdots + p_{k})^2 = \\
					 &=   \sum_{j=1}^{k-1} (p_1 + \cdots + p_j)^2 + \sum_{i=1}^{k-1} p_i (p_{i+1} + \cdots + p_{k}) + h_2(\bfp),
\end{align*}
where $h_2(\bfp) = \sum_{i \leq j} p_i p_j$ is the second complete symmetric function evaluated at $\bfp$, that is value of $\codim \calZ_\bfp$ in the statement of the theorem.

The second summand above cancels the $j=k$ term in the second term in \eqref{eqn: codim Z start}. To conclude we show that the sum of the remaining terms equals $\sum_{j=1}^{k-1} (p_1 + \cdots + p_j)^2$, so that they cancel and only $h_2(\bfp)$ is left. Indeed, for every $j$, we have 
\begin{align*}
&\sum_{i=1}^{j-1} (p_{i+1} + \cdots + p_{j})p_i + \sum_{i=1}^{j}  (p_i + \cdots + p_{j}) p_i = \\ 
&\sum_{1 \leq i < i' \leq j} p_{i} p_{i'} + \sum_{1 \leq i < i' \leq j} p_{i'}p_i +  \sum_{i=1}^j p_i^2  = (p_1 + \cdots + p_j)^2.
\end{align*}
Adding the contribution for every $j = 1 \vvirg k-1$, we conclude.
\end{proof}
Using \cref{thm: Z for MaMu}, we compute the geometric rank of the iterated matrix multiplication tensor.
\begin{corollary}\label{corol: GR MaMu}
Let $k \geq 3$ and $n \geq 2$ be integers. Let $q = \lfloor n/k \rfloor$ and $r = n - kq$. Let $\bfp = ((q+1)^r, q^{k-r})$ be the composition of $n$ consisting of $r$ copies of $(q+1)$ and $k-r$ copies of $q$. Then 
\[
\GR(\MaMu_{(n \vvirg n)}) = \codim \calZ_\bfp = h_2(\bfp) =   \frac{1}{2} ( n^2 + nq + r(q+1)).
\]
In particular, if $n$ is divisible by $k$, then $\bfp = (q \vvirg q)$ and 
\[
    \GR(\MaMu_{(n \vvirg n)}) = h_2(q \vvirg q) =  \frac{k+1}{2k} n^2.
\]
\end{corollary}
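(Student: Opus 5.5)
The geometric rank is, by definition, the codimension of $\calZ$, which is the minimum of the codimensions of its irreducible components. By \cref{prop: mamu quiver comps}, these components are exactly the $\calZ_\bfp$, as $\bfp$ ranges over compositions of $n$ into $k$ summands. By \cref{thm: Z for MaMu}, $\codim \calZ_\bfp = h_2(\bfp) = \sum_{i \le j} p_i p_j$. So the plan is to reduce to a discrete optimization problem: minimize $h_2(\bfp)$ over compositions of $n$ into $k$ nonnegative parts. (Here nonnegative parts are allowed, as in the description of the kernel dimensions.)

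First I rewrite the objective as
\[
h_2(\bfp) = \tfrac12\Bigl( (p_1+\cdots+p_k)^2 + \textstyle\sum_i p_i^2 \Bigr) = \tfrac12\Bigl( n^2 + \textstyle\sum_i p_i^2\Bigr).
\]
Minimizing $h_2$ is therefore equivalent to minimizing $\sum_i p_i^2$ subject to $\sum_i p_i = n$ with $p_i \in \bbZ_{\ge 0}$. This is minimized exactly when the parts are as balanced as possible. Indeed, if $p_a \ge p_b + 2$ for some $a,b$, replacing $(p_a,p_b)$ by $(p_a-1,p_b+1)$ changes $\sum p_i^2$ by $-2(p_a-p_b)+2<0$. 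Hence at a minimizer all parts differ by at most $1$. This forces $r$ parts equal to $q+1$ and $k-r$ parts equal to $q$, with $q=\lfloor n/k\rfloor$ and $r = n-kq$. The ordering of parts is irrelevant, since $h_2$ is symmetric, so we may take $\bfp = ((q+1)^r,q^{k-r})$.

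Then I compute the value:
\[
\sum_i p_i^2 = r(q+1)^2 + (k-r)q^2 = kq^2 + 2rq + r = q(kq+r) + r(q+1) = nq + r(q+1).
\]
This gives $\GR = \tfrac12(n^2+nq+r(q+1))$. When $k \mid n$ we have $r=0$ and $q=n/k$, so the value is $\tfrac12(n^2+n^2/k) = \frac{k+1}{2k}n^2$.

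There is no serious obstacle, since the heavy lifting (the component description and the codimension formula) is already in \cref{prop: mamu quiver comps} and \cref{thm: Z for MaMu}. The one point to check carefully is that the geometric rank is the \emph{minimum} codimension over all components, and that every composition, including those with zero parts, genuinely gives a component. Even if some compositions were not components, the balanced one still is, and it achieves the minimum among all candidates, so the conclusion is unaffected.
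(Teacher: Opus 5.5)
Your proof is correct and follows the same route as the paper: \cref{prop: mamu quiver comps} and \cref{thm: Z for MaMu} reduce the claim to minimizing $h_2$ over weak compositions of $n$ into $k$ parts. The only difference is in that last step: the paper cites Schur-convexity of $h_2$, while you make the step self-contained via the identity $h_2(\bfp)=\tfrac12\bigl(n^2+\sum_i p_i^2\bigr)$ and a balancing exchange, and you also check the closed-form value $\tfrac12(n^2+nq+r(q+1))$, which the paper leaves implicit.
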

\begin{proof}
    The complete symmetric function $h_2$ is Schur-convex, see, e.g., \cite{GuanSchurConvex}. Therefore, $h_2(\bfp)$ is minimized when the parts of $\bfp$ are as evenly distributed as possible that is when $\bfp$ is the composition described in the statement. 
\end{proof}
 \cref{corol: GR MaMu} provides the upper bound on the border subrank of $\MaMu_{(n \vvirg n)}$ in \cref{thm: main MaMu}. For $k = 2$, this recovers the upper bound of \cite{KopMosZui:GeomRankSubrankMaMu}, which is tight. In \cite{KopMosZui:GeomRankSubrankMaMu}, a tight upper bound was given also in the rectangular case, when $n_0,n_1,n_2$ are not necessarily equal. An analog of \cref{prop: mamu quiver comps} holds when $n_0 \vvirg n_k$ are not all equal as well, but the dimension analysis in the proof of \cref{thm: Z for MaMu} is much more complicated. 

A lower bound on the border subrank of $\MaMu_{(n \vvirg n)}$ can be obtained from \cite[Thm.~3]{VrChr}, which provides a lower bound for border subrank of any graph tensor, in the sense of \cite{ChrVraZui:AsyRankGraph}. Specifically, for a tensor associated to a certain graph, the lower bound relies on the existence of general-position orthogonal representations of the associated \emph{line graph}. In the case of iterated matrix multiplication, the graph defining the tensor is the cycle $C_{k+1}$ with $k+1$ vertices, which coincides with its own line graph. In this case, \cite[Thm.~3]{VrChr}, using the results of \cite{LSS89}, gives a lower bound $\uQ(\MaMu_{(n \vvirg n)}) \geq \Omega(n^2)$ for every fixed $k$. Instead of relying on \cite{LSS89}, we give an explicit construction of a general-position orthogonal representation, which yields a lower bound with explicit dependence on $n$ and $k$. For $k=3$, our lower bound recovers the lower bound $\uQ(\MaMu_{(n ,n,n, n)}) \geq \lceil n^2/2\rceil$ from \cite[Sec.~5.2]{ChrLucVraWer}.

\begin{proposition}
    Let $k\geq 3$, $n\geq 2$. Then 
    \[
    \uQ(\MaMu_{(n \vvirg n)})\geq \frac{n^2}{k-1}.
    \]
\end{proposition}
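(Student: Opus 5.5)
The plan is to obtain the bound through the graph-tensor framework of \cite{VrChr}, replacing the non-constructive existence result of \cite{LSS89} by an explicit construction. First I identify $\MaMu_{(n \vvirg n)}$ with the graph tensor of the cycle $C_{k+1}$ with bond dimension $n$ on every edge. The $k+1$ tensor factors $\Mat_{n\times n}$ (the output $\Mat_{n_0\times n_k}$ and the $k$ inputs) are the vertices. The indices $i_0 \vvirg i_k$ are the edges, each shared by two consecutive factors, with $i_0$ and $i_k$ closing the cycle through the output factor. In this language a restriction to $\bfu_{k+1}(q)$ amounts to choosing, for each vertex $v$, $q$ matrices $M^{(v)}_1 \vvirg M^{(v)}_q$ (or dual matrices). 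The coefficient of $e_{s_0} \ootimes e_{s_k}$ in the image is then a cyclic trace $\mathrm{tr}(M^{(0)}_{s_0} M^{(1)}_{s_1}\cdots M^{(k)}_{s_k})$, up to transposition conventions. A degeneration allows these matrices to depend polynomially on $\eps$, with all off-diagonal traces vanishing to positive order.

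Since $C_{k+1}$ is its own line graph, \cite[Thm.~3]{VrChr} reduces the construction to a general-position orthogonal representation of $C_{k+1}$. This is an assignment of vectors $w_0 \vvirg w_k$ in some $\bbC^d$ to the vertices such that $w_a \perp w_b$ whenever $a,b$ are non-adjacent in the cycle, and any $d$ of them are linearly independent. The complement of $C_{k+1}$ is $(k-2)$-connected, so by the Lov\'asz--Saks--Schrijver criterion such a representation should exist already in small dimension. I would not rely on this, but would write the vectors down explicitly. The first attempt is a moment-curve type choice: $w_a$ is built from powers of distinct parameters $t_a$ and corrected so that non-consecutive inner products vanish. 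This amounts to solving a triangular linear system along the cycle, with a final closing condition between $w_k$ and $w_0$. Genericity of the $t_a$ then gives general position. Concretely I would check that the relevant minors are nonzero Vandermonde-like determinants.

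Next I plug the representation into the construction underlying \cite[Thm.~3]{VrChr}. This tensors the representation with the standard basis of the bond spaces to produce the families $M^{(v)}_s$ and an $\eps$-weighting. Orthogonality kills the traces coming from non-adjacent incidences, and general position guarantees that the surviving diagonal terms are nonzero. I then track the number of indices $s$ produced as a function of $n$, $k$ and $d$. The goal is to show that the surviving diagonal indices can be taken to be a set of pairs $(a,b) \in [n]^2$ cut out by one linear congruence condition modulo $k-1$, of size at least $n^2/(k-1)$. As a sanity check, for $k=3$ this must reproduce the bound $\lceil n^2/2 \rceil$ of \cite[Sec.~5.2]{ChrLucVraWer}.

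I expect the main obstacle to be making the dependence on $k$ explicit and equal to $1/(k-1)$. The general theorem only yields $\Omega(n^2)$ with an unspecified constant, so the exponents of $\eps$ must be chosen carefully. The idea is to use weights of the form $\eps^{\phi(a,b)}$, with $\phi$ convex, in the spirit of the $2^{|p-\bar p|}$ trick in \cref{prop: lower bound QTRd}. These weights should make every off-diagonal cyclic trace vanish to positive order while the $n^2/(k-1)$ selected diagonal terms survive at order zero. If the congruence-class choice fails to close up around the cycle, the fallback is a direct degeneration of $\MaMu_{(n \vvirg n)}$ with diagonal $\eps$-scalings on each factor. In that fallback the matrix units $e_{ab}$ at vertex $j$ are paired with $e_{b,\sigma_j(a,b)}$ via shifts modulo $k-1$. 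The limit would be verified by an AM--GM/triangle-inequality argument identical in structure to the one used for $\calR_d$.
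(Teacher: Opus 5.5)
You have the right framework, and it is the one the paper uses: apply \cite[Thm.~3]{VrChr} to $C_{k+1}$ with an explicit general-position orthogonal representation. But the proposal stops exactly where the proof has to start. The constant $1/(k-1)$ is determined entirely by which vectors you choose, and you never choose them.

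In this construction one assigns vectors $c_0 \vvirg c_k$ to the indices $i_0 \vvirg i_k$, fixes a target $h$, and rescales by $\eps^{|c_0i_0+\cdots+c_ki_k-h|^2}$. Orthogonality of non-adjacent $c_j$ makes this exponent split into terms, each depending only on the two indices of one factor, so it is realized by diagonal scalings. General position makes the zero-exponent support free, i.e.\ the two indices at any factor determine the whole tuple. The size of the resulting unit tensor is therefore the number of points of $\{0 \vvirg n-1\}^{k+1}$ on the affine plane $\sum_j c_j i_j = h$.

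This has three consequences for your plan:
\begin{enumerate}
\item You need real vectors, in practice integral ones. Over $\bbC$ the bilinear form is not a norm, and the solution plane must contain many lattice points.
\item The dimension must be exactly $k-1$. Your use of Lov\'asz--Saks--Schrijver is off: orthogonality is imposed on non-adjacent vertices of $C_{k+1}$, which is only $2$-connected, so the minimal dimension is $k-1$, not small. In any case $d \geq k-1$ is forced by the requirement that two indices determine the tuple.
\item The moment-curve idea is unsuitable. For generic real parameters the plane typically contains at most one lattice point. For integer parameters nothing in your plan controls the count, and large Vandermonde-type minors work against you.
\end{enumerate}

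What is missing is the concrete choice and the count. The paper takes, in $\bbZ^{k-1}$,
\[
c_0=e_1, \qquad c_i=e_i+e_{i+1} \ (1\leq i\leq k-2), \qquad c_{k-1}=e_{k-1}, \qquad c_k=e_1-e_2+\cdots+(-1)^ke_{k-1}.
\]
The system then reads $i_{\ell-1}+i_\ell+(-1)^{\ell+1}i_k=h_\ell$ for $\ell=1 \vvirg k-1$. With $h_\ell=n-1-(-1)^\ell q$ and $q=\lfloor (n-1)/(k-1)\rfloor$, each pair $(i_0,i_k)$ determines $i_1 \vvirg i_{k-1}$. The constraints $0\leq i_\ell\leq n-1$ become $0\leq i_0+\ell(i_k-q)\leq n-1$. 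These are affine in $\ell$, so they reduce to the single strip $0\leq i_0+(k-1)(i_k-q)\leq n-1$, rather than the congruence class you anticipated. Counting gives $n+2\sum_{i<n}\lfloor i/(k-1)\rfloor$. For $n=a(k-1)+b$ with $0\leq b<k-1$ this equals $a^2(k-1)+2ab+b$, which is at least $n^2/(k-1)$.

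Neither your congruence-class target nor the fallback with shifts $\sigma_j$ modulo $k-1$ is specified enough to check that it closes up around the cycle and yields a free support of this size. As written, the proposal does not prove the bound.
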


\begin{proof}
Consider the general-position orthogonal representation of $C_{k+1}$ in $\mathbb Z^{k-1}$ given by
\begin{align*}
 & c_0= e_1, \\
 &c_i = e_{i}+e_{i+1} \quad \text{ for } 1\le i\le k-2, \\
 &c_{k-1} = e_{k-1}, \\
 &c_{k}  =e_1-e_2+e_3-\cdots+(-1)^{k}e_{k-1} .
\end{align*}
These vectors satisfy $\langle c_i,c_j\rangle=0$ whenever $i$ and $j$ are nonadjacent in the cycle, so \cite[Thm.~3]{VrChr} applies. For every $h\in\mathbb Z^{k-1}$, there is a curve $g(\eps)\in\GL(\Mat_{n\times n})\times\GL(\Mat_{n\times n}^*)^{\times k}$ such that 
\[
g(\eps)\MaMu_{(n,\dots,n)}=\sum_{i_0 \vvirg i_k \in\{0,\dots,n-1\}}\eps^{|c_0i_0+\cdots+c_{k}i_{k}-h|^2} e_{i_0 i_k} \otimes e_{i_0i_1}^* \ootimes e_{i_{k-1} i_k}^*.
\]
Then the limit $\lim_{\eps\to0}g(\eps)\MaMu_{(n,\dots,n)}$ is isomorphic to the unit tensor with size equal to 
the number of solutions of the linear system,
\[
c_0i_0+\cdots+c_{k}i_{k}=h\text{ with }i_0,\dots,i_k\in\{0,\dots,n-1\}, 
\] 
of $k-1$ equations in $i_0,\dots,i_k\in\{0,\dots,n-1\}$.

Consider $h=h_1e_1+\cdots+h_{k-1}e_{k-1}\in\mathbb Z^{k-1}$ with $h_\ell= (n-1)-(-1)^\ell q$ where $q = \lfloor \frac{n-1}{k-1} \rfloor$. The system of linear equations reduces to
\[
\begin{array}{cc}
    i_0+\ell i_k=\begin{cases}
        n-1+\ell q-i_\ell &\text{ if }\ell\text{ is odd,} \\
        \ell q +i_\ell&\text{ if }\ell\text{ is even,}
    \end{cases} & \text{ with }i_0,\dots,i_k\in\{0,\dots,n-1\}.
\end{array}
\]
Therefore, every choice of $i_0,i_k$ uniquely determines a solution, as long as, for every $\ell = 0 \vvirg k-1$, the following inequality holds:
\[
\begin{array}{ccc}
    \ell q \leq i_0+\ell i_k\leq n-1+\ell q & \text{ with } i_0,i_k\in\{0,\dots,n-1\}.
\end{array}
\]
The inequalities for $\ell = 0$ and for $\ell = k-1$ together imply all others. The number of solutions of the linear system is the number of tuples $(i_0,i_{k})\in\{0,\dots,n-1\}^{\times 2}$ satisfying 
\[
(k-1) q \leq i_0+(k-1) i_k\leq n-1+(k-1)q.
\]
This number is 
\[
M(n,k)=\sum_{i_0=0}^{n-1}\left(\left\lfloor\frac{n-1-i_0}{k-1}\right\rfloor+\left\lfloor\frac{i_0}{k-1}\right\rfloor+1\right)=n+2\sum_{i=0}^{n-1}\left\lfloor\frac{i}{k-1}\right\rfloor=n+q(n-k+r+2)
\]
where $r=(n-1)-q(k-1)$. The resulting degeneration selects exactly a family of $M(n,k)$ surviving summands, yielding 
\[
\uQ(\MaMu_{(n \vvirg n)}) \geq M(n,k) \geq \frac{n^2}{k-1} .
\]
This concludes the proof.
\end{proof}
For iterated matrix multiplication, unlike the case with three tensor factors, border subrank and geometric rank can be different. In particular, the upper bound of \cref{thm: main MaMu} is not necessarily sharp, as shown in the next example.  
\begin{example}\label{ex: MaMu 2222}
Consider $n = 2$, $k=3$, so that $\MaMu_{(2,2,2,2)}$ is the structure tensor of the $3$-fold product in $\Mat_{2 \times 2}$. The geometric rank computed in \cref{corol: GR MaMu} is $\GR(\MaMu_{(2,2,2,2)}) = 3$. We prove that $\rmQ(\MaMu_{(2,2,2,2)}) = \uQ(\MaMu_{(2,2,2,2)}) = 2$. The lower bound is clear: the restriction to $2 \times 2$ diagonal matrices shows $\rmQ(\MaMu_{(2,2,2,2)}) \geq 2$. If $\uQ(\MaMu_{(2,2,2,2)}) = 3$, then the variety
\[
\calM := \{ T \in \bbC^2 \otimes \bbC^2 \otimes \bbC^2 \otimes \bbC^2 : T \text{ is a degeneration of } \MaMu(2,2,2,2) \}
\]
of degenerations of $\MaMu_{(2,2,2,2)}$ would contain all tensors of border rank at most $3$. In \cite[Thm.~5.3]{BerDLaGes}, it was shown that $\calM$ is the hypersurface of degree $6$ defined by the invariant $F_6$ described in \cref{sec: invariants 2222}. Such an equation does not vanish on the set of tensors of border rank at most $3$; for instance, the tensor 
\[
T = e_0^{\otimes 4} + e_1^{\otimes 4} + (e_0 + e_1)^{\otimes 4} 
\]
is not in $\calM$, but it is immediate that $\rmR(T) = \uR(T) = 3$.
\end{example}

\section{Triangular matrix multiplication}\label{sec: TMaMu}

Let $\calT_n$ be the algebra of $n \times n$ upper triangular matrices. Note $\dim \calT_n = n(n+1)/2$. Consider the structure tensor $T^{(k)}_{\calT_n}$ of the $k$-fold multiplication in $\calT_n$. In this section, we compute the border subrank and the geometric rank of $T^{(k)}_{\calT_n}$, which turn out to be equal. We provide a lower bound on the subrank of $T^{(k)}_{\calT_n}$, which is equal to the border subrank when $k\geq n/2$.

Let $e_{ij}$ for $i \leq j$ be the basis of $\calT_n$ where $e_{ij}$ is the matrix with $1$ on the $(i,j)$-entry and zero elsewhere. Write $e_{ij}^*$ for the dual basis of $\calT_n^*$. The structure tensor of $k$-fold multiplication in~$\calT_n$ is
\[
T_{\calT_n}^{(k)} = \sum_{1 \leq i_0 \leq \cdots \leq i_{k} \leq n} e_{i_0i_{k}} \otimes e^*_{i_0i_1} \ootimes e^*_{i_{k-1}i_{k}}.
\]
In this section, we prove the following result:
\begin{theorem}\label{thm: subrank triangular}
Let $n,k \geq 1$ be integers, and let $q=\left\lfloor\frac{n}{k}\right\rfloor$. Then 
\[
\uQ(T^{(k)}_{\calT_n}) = \GR(T^{(k)}_{\calT_n}) = \frac{(q+1)(2n-qk)}{2}.
\]
\end{theorem}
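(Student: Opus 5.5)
Since $\uQ \leq \GR$ by \eqref{eqn: Q bounded by Grank}, it suffices to prove two things: the upper bound $\GR(T^{(k)}_{\calT_n}) \leq \frac{(q+1)(2n-qk)}{2}$, and the lower bound $\uQ(T^{(k)}_{\calT_n}) \geq \frac{(q+1)(2n-qk)}{2}$, the latter via an explicit degeneration to a unit tensor.

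\textbf{Upper bound.} I would exhibit a large linear subspace inside $\calZ_k(\calT_n) = \{(a_1 \vvirg a_k) : a_1 \cdots a_k = 0\}$. The entry $(i_0,i_k)$ of $a_1\cdots a_k$ is a sum over chains $i_0 \le i_1 \le \cdots \le i_k$. Choose a composition of the diagonal positions into $k$ consecutive blocks $B_1, \ldots, B_k$ of sizes $p_1 \vvirg p_k$ with $\sum p_j = n$. The idea is to force the chain to ``get stuck'': require $a_j$ to vanish on entries $(i,i')$ with $i \le \max B_j$ and $i' \ge \min B_j$. These are the entries whose row index is at most the top of block $j$ and whose column index reaches block $j$.

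The point to verify is the intended mechanism: the chain must pass through every block, but $a_j$ cannot ``cross'' block $B_j$, so any nonzero chain contributes nothing. I would check this carefully (and if it fails, replace it with the analogous flag-stratification condition modeled on $\calZ_\bfp$ in \cref{thm: Z for MaMu}). Counting the imposed vanishing conditions gives a codimension which is a quadratic function of $\bfp$. Minimizing over compositions with parts as equal as possible should produce $\frac{(q+1)(2n-qk)}{2}$. A sanity check: at $k=1$ this gives $\dim \calT_n = n(n+1)/2$, and for $k \ge n$ (so $q=0$, with parts in $\{0,1\}$) it gives $n$.

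An alternative, possibly cleaner route is to note that the restriction to the diagonal gives $n$, and that the quantity equals $\sum_{m} \#\{(i,i') : i' - i = m\}$ summed over $m=0 \vvirg q$, namely $\sum_{m=0}^{q}(n-mk)$. This equals $(q+1)n - k\frac{q(q+1)}{2}$, which matches the formula. This suggests the natural subspace: $a_j$ has zero entries $e_{i,i+m}$ for ``short'' offsets in a staggered pattern.

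\textbf{Lower bound.} The count $\sum_{m=0}^{q}(n-mk)$ suggests the degeneration. The target unit tensor is indexed by pairs $(i, i+mk)$ with $0 \le m \le q$, each realized by the equally spaced chain $i, i+m, i+2m, \ldots, i+km$. So I would first restrict $e_{i_0 i_k}$ to offsets that are multiples of $k$, at most $qk$, and restrict the dual factors to offsets at most $q$. Then I would apply a one-parameter degeneration as in the proof of \cref{prop: lower bound QTRd}: weight $e^*_{i,i+m}$ by $\eps^{w(m)}$ with $w$ strictly convex (for instance $w(m)=2^{|m-\bar m|}$ or $m^2$), and compensate the output factor by $\eps^{-k w(m)}$ for offset $mk$. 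By strict convexity, i.e. Jensen/AM-GM as before, every chain with increments $m_1 \vvirg m_k$ summing to $mk$ has exponent $\sum w(m_j) - k w(m) \ge 0$, with equality only for equal increments. The limit is then a sum over pairs $(i,m)$ of $e_{i,i+mk}\otimes e^*_{i,i+m}\ootimes e^*_{i+(k-1)m,i+km}$.

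It remains to check that this limit is a unit tensor. Distinct pairs $(i,m)$ use distinct elements in each factor, since $(i+jm, i+(j+1)m)$ determines $(i,m)$, so the limit is isomorphic to $\bfu_{k+1}$ of the required size. Note that using $w(m)=m^2$ avoids the offset $\bar m$ shift issue.

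\textbf{Main obstacle.} I expect the hardest step to be the upper bound: proving that the chosen linear space genuinely lies in $\calZ_k(\calT_n)$ and that its codimension matches exactly. If a direct construction resists, I would fall back on the quiver and flag incidence machinery, restricted to triangular (flag-preserving) maps.
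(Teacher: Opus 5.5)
Your reduction ($\uQ\le\GR$, so an upper bound on $\GR$ together with an explicit degeneration suffices) is correct, and so is your lower bound. The lower bound coincides with the paper's argument. The paper scales $e^*_{a,b}$ by $\eps^{k(b-a)^2}$ and $e_{a,b}$ by $\eps^{-(b-a)^2}$, which is your choice $w(m)=m^2$ with every exponent multiplied by $k$. It skips your preliminary restriction because chains whose total offset is not a multiple of $k$ already get a strictly positive exponent from Cauchy--Schwarz.

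The gap is in the upper bound on $\GR$. The subspace you define has $a_j$ vanishing on every entry $(i,i')$ with $i\le\max B_j$ and $i'\ge\min B_j$. It does lie in $\calZ_k(\calT_n)$, but it imposes far too many conditions, so its codimension is not the target value.
\begin{itemize}
\item Take $n=k=2$ with $B_1=\{1\}$ and $B_2=\{2\}$. Your conditions kill $(1,1),(1,2)$ in $a_1$ and $(1,2),(2,2)$ in $a_2$. This gives codimension $4$, whereas $\frac{(q+1)(2n-qk)}{2}=2$.
\item With singleton blocks (the case $k\ge n$) you get $\sum_j j(n-j+1)=\binom{n+2}{3}$ rather than $n$. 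So the sanity check you state is not satisfied by your own construction.
\item The ``staggered offsets'' alternative is numerology rather than an actual subspace. Also, the number of pairs with $i'-i=m$ is $n-m$, not $n-mk$.
\item The quiver fallback is not carried out.
\end{itemize}

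The missing idea is that you only need to kill the diagonal block. Let $L_j\subseteq\calT_n$ consist of the matrices vanishing on the upper triangular part of $B_j\times B_j$, so $\codim L_j=\binom{p_j+1}{2}$.

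To see that $L_1\times\cdots\times L_k\subseteq\calZ_k(\calT_n)$, take any chain $i_0\le\cdots\le i_k$. Let $\beta(i)$ be the index of the block containing $i$, and set $c_j=\beta(i_j)-j$. Then $c_0\ge 1$, $c_k\le 0$, and $c_j\ge c_{j-1}-1$. At the first $j$ with $c_j\le 0$ you therefore have $c_{j-1}=1$ and $c_j=0$. This means $i_{j-1},i_j\in B_j$, so the factor $(a_j)_{i_{j-1}i_j}$ vanishes and the chain contributes nothing.

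Now take $r=n-qk$ blocks of size $q+1$ and $k-r$ blocks of size $q$. The codimension is $r\binom{q+2}{2}+(k-r)\binom{q+1}{2}=\frac{(q+1)(2n-qk)}{2}$. This is the paper's construction, and it needs neither a minimization over compositions nor any flag or quiver machinery.
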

The upper bound in \cref{thm: subrank triangular} is obtained by giving an upper bound to geometric rank in \cref{prop: triangular GR upper bound}. The lower bound is obtained by an explicit degeneration to a unit tensor of the desired size. The lower bound in the case $k=2$ was already computed in \cite{Strassen87}. In \cref{prop: triangular Q lower bound}, we provide a lower bound on the subrank of $ T^{(k)}_{\calT_n}$, which matches the value of the border subrank when $k \geq n/2$. 

The following result determines an upper bound on the geometric rank of $T^{(k)}_{\calT_n}$, which gives the upper bound in \cref{thm: subrank triangular}. 
\begin{proposition}\label{prop: triangular GR upper bound}
Let $n,k \geq 1$ be integers, and let $q=\left\lfloor\frac{n}{k}\right\rfloor$. Then 
\[
\GR(T^{(k)}_{\calT_n}) \leq \frac{(q+1)(2n-qk)}{2}.
\]
\end{proposition}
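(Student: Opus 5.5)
The plan is to exhibit an explicit linear subspace $L \subseteq \calZ_k(\calT_n) = \{(a_1 \vvirg a_k) \in \calT_n^{\times k} : a_1 \cdots a_k = 0\}$ of the right codimension. Since $\GR(T^{(k)}_{\calT_n}) = \codim \calZ_k(\calT_n)$ and $L \subseteq \calZ_k(\calT_n)$, we get $\GR(T^{(k)}_{\calT_n}) \leq \codim L$. Write $n = qk + r$ with $0 \leq r < k$ and fix a composition $\bfp = (p_1 \vvirg p_k)$ of $n$: take $r$ parts equal to $q+1$ and $k-r$ parts equal to $q$, with zero parts allowed when $q = 0$. This partitions $[n]$ into consecutive intervals $I_1 < I_2 < \cdots < I_k$ with $|I_j| = p_j$. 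Every upper triangular matrix is then block upper triangular with respect to this partition, with upper triangular diagonal blocks.

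Define $L$ as the set of tuples $(a_1 \vvirg a_k)$ such that the $(j,j)$ diagonal block of $a_j$ vanishes. In coordinates, $(a_j)_{ii'} = 0$ whenever $i \leq i'$ and $i,i' \in I_j$. This is a coordinate subspace of codimension $\sum_j p_j(p_j+1)/2$. Substituting the chosen parts gives
\[
\tfrac{q+1}{2}\bigl(r(q+2) + (k-r)q\bigr) = \tfrac{(q+1)(kq+2r)}{2} = \tfrac{(q+1)(2n-qk)}{2},
\]
which is exactly the claimed bound.

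The key step is to check that $L \subseteq \calZ_k(\calT_n)$. The $(i_0,i_k)$ entry of $a_1 \cdots a_k$ is a sum over chains $i_0 \leq i_1 \leq \cdots \leq i_k$ of the products $\prod_j (a_j)_{i_{j-1} i_j}$. Let $b_j \in [k]$ be the index of the block containing $i_j$; then $b_0 \leq \cdots \leq b_k$ is nondecreasing. It therefore suffices to show that every nondecreasing sequence $b_0 \vvirg b_k$ in $[k]$ has some step $j$ with $b_{j-1} = b_j = j$, because then the factor $(a_j)_{i_{j-1} i_j}$ lies in the vanishing block. To prove this, set $f(j) = b_j - j$. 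We have $f(0) = b_0 \geq 1$ and $f(k) = b_k - k \leq 0$, and $f(j) \geq f(j-1) - 1$ because $b$ is nondecreasing. Let $j$ be the first index with $f(j) \leq 0$. Then $f(j-1) \geq 1$ forces $f(j-1) = 1$ and $f(j) = 0$, i.e. $b_{j-1} = j = b_j$. Hence every chain contributes zero, and $a_1 \cdots a_k = 0$ on $L$.

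I expect the only real obstacle to be finding this combinatorial covering lemma, i.e. choosing the right subspace; once the block-diagonal construction is in hand, the verification and the codimension count above are routine. One could also note that $\sum_j p_j(p_j+1)/2$ is Schur-convex in $\bfp$, so the balanced composition is the best choice within this family; this mirrors \cref{corol: GR MaMu} and would be relevant if one expects these coordinate subspaces to be components realizing the geometric rank. For the upper bound, however, only the inclusion $L \subseteq \calZ_k(\calT_n)$ and the codimension computation are needed.
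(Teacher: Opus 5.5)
Your proposal is correct and is essentially the paper's own argument: it uses the balanced block decomposition, takes the product of subspaces $L_1 \times \cdots \times L_k$ with the $j$-th diagonal block of the $j$-th factor set to zero, and computes the same codimension. Your covering argument with $f(j) = b_j - j$ is a welcome addition, since it rigorously justifies the vanishing of the $k$-fold product on this subspace, which the paper asserts without proof.
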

\begin{proof}
Write $n = qk+r$ and consider the blocking of a matrix of size $n \times n$ into blocks of sizes $(\underbrace{(q+1)\vvirg (q+1)}_{r \text{ times}} , \underbrace{q \vvirg q}_{k-r \text{ times}} )$. A triangular matrix then is partitioned into blocks, with $k$ triangular blocks on the diagonal of size either $(q+1) \times (q+1)$ or $q \times q$. 

Consider the multilinear subspace $L_1 \ttimes L_k \subseteq \calT_n^{\times k}$ where $L_j$ has zeroes on the $j$-th diagonal block; in particular $\codim L_j = \dim \calT_{q}$ or $\codim L_j = \dim \calT_{q+1}$ depending on the size of the block. As a multilinear map $T^{(k)}_{\calT_n}$ is identically zero on $L_1 \ttimes L_k$ hence
\begin{align*}
\GR(T^{(k)}_{\calT_n}) &\leq \codim (L_1 \ttimes L_k) \\
&= r \cdot \dim \calT_{q+1} + (k-r) \cdot \dim \calT_q \\
&= r \cdot (q+1)(q+2)/2 + (k-r) \cdot q(q+1)/2 =  (q+1)(2n - qk)/2.\qedhere
\end{align*}
\end{proof}

The lower bound in \cref{thm: subrank triangular} is obtained by providing an explicit degeneration of $T^{(k)}_{\calT_n}$ to a unit tensor of size $(q+1)(2n-qk)/2$.

\begin{proof}[{Proof of \cref{thm: subrank triangular}}]
We are going to show that $T^{(k)}_{\calT_n}$ degenerates to the tensor 
\[
\sum_{\substack{ d\geq0 \\ a+kd\leq n}} e_{a,a+kd}\otimes e_{a,a+d}^*\otimes e_{a+d,a+2d}^*\otimes\cdots\otimes e_{a+(k-1)d,a+kd}^*,
\]
and we will show this is isomorphic to the unit tensor of size $(q+1)(2n-qk)/2$. Define maps $X_0 \vvirg X_k$, depending on a parameter $\eps$, as follows:
\[
   \begin{array}{rcl} 
    X_0 : \calT_n &\to & \calT_n \\
      e_{a,b}& \mapsto &\eps^{-(b-a)^2}e_{a,b} 
      \end{array}, \qquad
\begin{array}{rcl}
X_i : \calT_n^* &\to & \calT_n^* \\
   e_{a,b}^* &\mapsto &\eps^{k(b-a)^2}e_{a,b}^* 
\end{array} \text{ for } i = 1 \vvirg k.
\]
Then 
\begin{align*}
    (X_0\otimes&\cdots\otimes X_k)(T^{(k)}_{\calT_n}) \\[5pt]
    &= \sum_{1 \leq i_0 \leq \cdots \leq i_{k} \leq n} \eps^{-(i_k-i_0)^2+k[(i_1-i_0)^2+\cdots+(i_k-i_{k-1})^2]}\cdot e_{i_0i_{k}} \otimes e^*_{i_0i_1} \ootimes e^*_{i_{k-1}i_{k}}.
\end{align*}
We are going to prove that the exponents of $\eps$ in the summation above are strictly positive unless $i_1-i_0=\cdots=i_k-i_{k-1}$, in which case they are zero. 
For $1 \leq i_0 \leq \cdots \leq i_{k} \leq n$, using Cauchy–Schwarz inequality, we obtain 
\[
k\left[(i_1-i_0)^2+\cdots+(i_k-i_{k-1})^2\right]\geq \left[(i_1-i_0)+\cdots+(i_k-i_{k-1})\right]^2 = (i_k-i_0)^2.
\]
This shows that the exponents of $\eps$ in the degeneration are always nonnegative. Moreover, they are zero if and only if $i_1-i_0=\cdots=i_k-i_{k-1}$. Let $a=i_0$ and $d=i_1-i_0=\cdots=i_k-i_{k-1}$. Passing to the limit, we obtain
\[
\lim_{\eps\to0}(X_0\otimes\cdots\otimes X_k)(T^{(k)}_{\calT_n})=\sum_{\substack{ d\geq0 \\[3pt] 1\leq a\leq n-kd}} e_{a,a+kd}\otimes e_{a,a+d}^*\otimes e_{a+d,a+2d}^*\otimes\cdots\otimes e_{a+(k-1)d,a+kd}^*,
\]
which is isomorphic to the unit tensor of size equal to the cardinality of the set of pairs
\[
S=\Bigl\{(a,d):d=0,1,\dots,\lfloor (n-1)/k \rfloor\text{ and }a=1,2,\dots,n-kd\Bigr\}.
\]
The cardinality of this set is 
\[
|S| = \sum_{d=0}^{\left\lfloor\frac{n-1}{k}\right\rfloor}(n-kd) = \frac{(\left\lfloor\frac{n-1}{k}\right\rfloor+1)(2n-\left\lfloor\frac{n-1}{k}\right\rfloor k)}{2}.
\]
Write $n=qk+r$ where $q = \left\lfloor\frac{n}{k}\right\rfloor$ and $r=0,\dots,k-1$. 
If $r\neq0$, then $\left\lfloor\frac{n-1}{k}\right\rfloor=q$ and $|S|=\frac{(q+1)(2n-qk)}{2}$.
If $r=0$, then $n=qk$, $\left\lfloor\frac{n-1}{k}\right\rfloor=q-1$, and 
\[
|S|=\frac{q(2n-qk+k)}{2}=\frac{qk(q+1)}{2}=\frac{(q+1)(2n-qk)}{2}.\qedhere
\]
\end{proof}

We provide a lower bound on the subrank of $T^{(k)}_{\calT_n}$ built on the combinatorics of $k$-average free sets. We say that a subset $D \subseteq \bbZ$ is \emph{$k$-average free} if the only choice of $k+1$ elements $x_1 \vvirg x_k,y \in D$ satisfying $x_1+\cdots+x_k = ky$ is $x_1=\cdots=x_k=y$.

\begin{proposition}\label{prop: triangular Q lower bound}
Let $n,k \geq 1$ be integers and let $q = \lfloor (n-1)/k \rfloor$. Let $D\subseteq\{0 \vvirg q\}$ be a $k$-average free set, then 
\[
\rmQ(T^{(k)}_{\calT_n}) \geq \sum_{d\in D}(n-kd).
\]
In particular, if $n/2 \leq k \leq n-1$, then $\rmQ(T^{(k)}_{\calT_n}) = \uQ(T^{(k)}_{\calT_n}) = 2n-k$ and if $k \geq n$ then $\rmQ(T^{(k)}_{\calT_n}) = \uQ(T^{(k)}_{\calT_n}) = n$.
\end{proposition}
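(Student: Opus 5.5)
We restrict $T^{(k)}_{\calT_n}$ directly, with no limit, to the same family of terms that survived in the degeneration used in the proof of \cref{thm: subrank triangular}, but now only for step sizes $d \in D$. Let $S_D = \{(a,d) : d \in D,\ 1 \leq a \leq n-kd\}$, so $|S_D| = \sum_{d \in D}(n-kd)$. Define $X_0 : \calT_n \to \calT_n$ as the coordinate projection onto $\langle e_{a,a+kd} : (a,d) \in S_D\rangle$. Define $X_1 = \cdots = X_k$ as the coordinate projections of $\calT_n^*$ onto $\langle e^*_{a,a+d} : 1 \leq a,\ d \in D\rangle$, meaning we keep every dual basis element whose difference of indices lies in $D$. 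After this restriction, the surviving terms of
\[
T^{(k)}_{\calT_n} = \sum_{i_0 \leq \cdots \leq i_k} e_{i_0 i_k}\otimes e^*_{i_0i_1}\ootimes e^*_{i_{k-1}i_k}
\]
are exactly those with $d_j := i_j - i_{j-1} \in D$ for $j=1 \vvirg k$ and $i_k - i_0 = k y$ for some $y \in D$. Since the $d_j$ sum to $i_k - i_0$, we get $d_1 + \cdots + d_k = ky$. By the $k$-average free property, $d_1 = \cdots = d_k = y$. So the surviving terms are exactly $e_{a,a+kd}\otimes e^*_{a,a+d}\ootimes e^*_{a+(k-1)d,a+kd}$ for $(a,d)\in S_D$.

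Next we check that this sum is isomorphic to $\bfu_{k+1}(|S_D|)$. Distinct pairs $(a,d)$ use distinct basis vectors in every factor: the vector $e_{a,a+kd}$ determines $(a,d)$, and so does $e^*_{a+(j-1)d,a+jd}$, because it determines $d$ and hence $a$. Any surviving $e^*$ in factor $j$ that is not of this form simply never appears. The tensor is therefore a unit tensor on $|S_D|$ terms, which gives the first claim. The subtle point is whether $d=0$ is allowed. With $d=0$ the $k$-average-free property is used in the same way, since $0 \in D$ forces the diagonal terms $e_{aa}$, and these are handled identically.

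For the sharp cases, first take $n/2 \leq k \leq n-1$, so that $q = \lfloor (n-1)/k\rfloor = 1$. The set $D=\{0,1\}$ is $k$-average free: $x_1+\cdots+x_k = ky$ with $x_i \in\{0,1\}$ forces all $x_i = y$, since the sum equals $0$ or $k$ only in the constant cases. This gives $\rmQ \geq n + (n-k) = 2n-k$. The matching upper bound comes from \cref{thm: subrank triangular}: with $q' = \lfloor n/k \rfloor$, either $q'=1$, giving $(2)(2n-k)/2 = 2n-k$, or $k = n/2$ exactly, giving $q'=2$ and $3(2n-2k)/2 = 3n/2 = 2n-k$. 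In both cases $\rmQ \leq \uQ$ finishes the argument.

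If $k \geq n$, then $q=0$, and $D=\{0\}$ gives $\rmQ \geq n$. Moreover $\lfloor n/k\rfloor \in \{0,1\}$, with the value $1$ only when $k=n$. The formula of \cref{thm: subrank triangular} then gives $(1)(2n)/2 = n$ or $2(2n-n)/2 = n$ respectively. The only real work is the support bookkeeping in the first paragraph; the rest is arithmetic.
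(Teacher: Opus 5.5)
Your proposal is correct and follows the paper's proof: a coordinate restriction that keeps the $e_{a,a+kd}$ with $(a,d)\in S_D$ in the output factor and step-$D$ dual basis vectors in the input factors, $k$-average-freeness collapsing the support to the $|S_D|$ diagonal terms, and \cref{thm: subrank triangular} supplying the matching upper bound. The only differences are cosmetic. You keep every $e^*_{u,v}$ with $v-u\in D$ in the input factors, whereas the paper keeps only those of the form $e^*_{a+(p-1)d,a+pd}$. You also spell out the boundary arithmetic for $k=n/2$ and $k=n$, which the paper leaves implicit.
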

\begin{proof}
 For $p = 1 \vvirg k$, consider the restriction on the $p$-th factor $\calT_n^*$ given by 
\[
\begin{array}{rll}
    X_p : \calT_n^* &\to \calT_n^* &\\ 
    e_{a+(p-1)d,a+pd}^* &\mapsto e_{a+(p-1)d,a+pd}^*  & \text{for }d\in D\text{ and }a = 1 \vvirg n-kd \\
    e_{ij}^* &\mapsto 0 & \text{otherwise}.
\end{array}
\]
On the $0$-th factor $\calT_n$, consider the restriction
\[
\begin{array}{rll}
    X_{0} : \calT_n &\to \calT_n &\\
    e_{a,a+kd} &\mapsto e_{a,a+kd}  & \text{for }d\in D\text{ and }a = 1 \vvirg n-kd \\
    e_{ij} &\mapsto 0 & \text{otherwise}.
\end{array}
\]
This restriction zeroes out all terms in $T^{(k)}_{\calT_n}$ except for the sum of the terms 
\[
 e_{i_0i_{k}} \otimes e^*_{i_0i_1} \ootimes e^*_{i_{k-1}i_{k}}
\]
with $1 \leq i_0 \leq \cdots \leq i_{k} \leq n$, such that $i_p - i_{p-1} = d_p$ for some $d_p\in D$ and $i_k=i_0+kd$ for some $d\in D$. We obtain $i_0+kd = i_k = i_0+d_1+\cdots+d_k$, therefore $d_1 + \cdots + d_k = k d$ with $d_1 \vvirg d_k,d \in D$. Since $D$ is $k$-average free, we deduce $d_1=\cdots=d_k=d$, so that the restriction is 
\[
(X_0\otimes\cdots\otimes X_k)(T^{(k)}_{\calT_n})=\sum_{\substack{ d\in D \\[3pt] 1\leq a\leq n-kd}} e_{a,a+kd}\otimes e_{a,a+d}^*\otimes e_{a+d,a+2d}^*\otimes\cdots\otimes e_{a+(k-1)d,a+kd}^*,
\]
which is isomorphic to the unit tensor $\bfu_{k+1}(\sum_{d\in D}(n-kd))$.

If $k \geq n/2$, then $q = 1$ and $D = \{0,1\}$ is clearly a $k$-average free subset of $\{0,1\}$. In this case, we obtain $\rmQ(T^{(k)}_{\calT_n}) \geq 2n-k$ which matches the value of the border subrank in \cref{thm: subrank triangular}.

If $k \geq n$, then $q=0$ and the construction above yields the restriction to diagonal matrices. This gives $\rmQ(T^{(k)}_{\calT_n}) \geq n$ which matches the value of the border subrank in \cref{thm: subrank triangular}.
\end{proof}

If $n\geq 2k$, then $q=\lfloor\frac{n-1}{k}\rfloor\geq 2$. In this case, a $k$-average free set $D$ must be a proper subset of $\{0,1,\dots,q\}$ because $0+(k-2)\cdot1+2 = k\cdot 1$. In particular, the lower bound of \cref{prop: triangular Q lower bound} is always strictly smaller than the value of the border subrank computed in \cref{thm: subrank triangular}.

\section{Special linear Lie algebra}\label{sec: lie sln}

In this section, we discuss structure tensors of Lie algebras, that is the tensor corresponding to the bilinear map defined by the Lie bracket. For a Lie algebra $\frakg$, let 
\[   
T^{(2)}_\frakg : \frakg \times \frakg \to \frakg, \qquad (X,Y) \mapsto [X,Y]
\]
be the structure tensor of the Lie bracket and $T^{(k)}_\frakg :(X_1 \vvirg X_k) \mapsto [\cdots[X_1,X_2],X_3]\cdots,X_k]$ be the structure tensor of the $k$-fold Lie bracket. We will focus on the case $\frakg = \fraksl_n$, the Lie algebra of traceless $n \times n$ matrices with the commutator bracket. First, we provide some basic observations about general Lie algebras. We refer to \cite{Hall} for background material on this topic. 

For a given Lie algebra $\frakg$, let $\frakz \subseteq \frakg$ be its center, that is the subspace $\frakz = \{ Z \in \frakg : [Z,\frakg] = 0\}$ and let $\frakg' = [\frakg,\frakg]$ be the derived subalgebra. If $\frakz \neq 0$, the structure tensor $T_\frakg^{(2)} \in \frakg \otimes \frakg^* \otimes \frakg^*$ is not concise, because it belongs to the subspace $\frakg' \otimes \frakz^\perp \otimes \frakz^\perp$. More generally, define $\frakg^{(p)} = [\frakg^{(p-1)},\frakg]$ and $\frakz^{(p)} = \{ Z \in \frakg : [Z,\frakg^{(p-1)}] = 0\}$; in particular $\frakg' = \frakg^{(1)}$ and $\frakz = \frakz^{(1)}$. Then 
\[
T^{(k)}_\frakg \in \frakg^{(k-1)} \otimes {\frakz^{(1)}}^\perp \otimes {\frakz^{(1)}}^\perp \otimes {\frakz^{(2)}}^\perp \ootimes {\frakz^{(k-1)}}^\perp;
\]
for example, if $\frakg$ is nilpotent, in the sense that $\frakg^{(k_0)} = 0$ for some $k_0$, then $T^{(k)}_\frakg = 0$ if $k \geq k_0$. This highlights the importance of the identity elements in propagation results such as \cref{thm: degeneration between higher order algebras}.

If $\frakg$ is reductive, then $\frakg = \fraks \oplus \frakz$ where $\fraks$ is semisimple. In this case $T^{(k)}_\frakg = T^{(k)}_\fraks \in \fraks \otimes \fraks^* \ootimes \fraks^*$. We provide a general result on geometric rank in this case, restricting to the semisimple setting.
\begin{proposition}\label{prop: GR semisimple}
Let $\frakg$ be a semisimple Lie algebra of dimension $n$ and rank $r$. Then 
\[
\GR(T^{(2)}_\frakg) = n - r.
\]
\end{proposition}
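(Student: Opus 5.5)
To compute $\GR(T^{(2)}_\frakg)$ we study the variety of commuting pairs
\[
\calZ_2(\frakg) = \{ (X,Y) \in \frakg \times \frakg : [X,Y] = 0\}.
\]
By definition, $\GR(T^{(2)}_\frakg) = 2n - \dim \calZ_2(\frakg)$. So the statement is equivalent to $\dim \calZ_2(\frakg) = n + r$. I will prove the two inequalities separately, using the adjoint group $G$ of $\frakg$, which acts diagonally on $\calZ_2(\frakg)$.

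\emph{Lower bound on $\dim \calZ_2(\frakg)$.} Let $\frakg^{\mathrm{rs}}$ be the dense open set of regular semisimple elements. For $X \in \frakg^{\mathrm{rs}}$, the centralizer $\frakg^X$ is a Cartan subalgebra, of dimension $r$. Consider the projection $\calZ_2(\frakg) \to \frakg$, $(X,Y) \mapsto X$. Its fibre over $X$ is $\{X\} \times \frakg^X$, a linear space of dimension $r$ whenever $X$ is regular semisimple. Hence the closure of the preimage of $\frakg^{\mathrm{rs}}$ is an irreducible subvariety of $\calZ_2(\frakg)$ of dimension $n + r$. This gives $\dim \calZ_2(\frakg) \geq n+r$, that is, $\GR(T^{(2)}_\frakg) \leq n - r$. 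Equivalently, this component is $\overline{G \cdot (\frakh \times \frakh)}$ for a Cartan subalgebra $\frakh$.

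\emph{Upper bound on $\dim \calZ_2(\frakg)$.} This is the main obstacle: we need every irreducible component of $\calZ_2(\frakg)$ to have dimension at most $n + r$, not just the one built above. The cleanest route is Richardson's theorem that the commuting variety of a reductive Lie algebra is irreducible, equal to $\overline{G\cdot(\frakh\times\frakh)}$, which then has dimension $n+r$. If I prefer a self-contained argument, I will stratify by the first coordinate using sheets, or more simply using Jordan decomposition classes (the decomposition classes of Borho--Kraft). There are finitely many such classes $\calD$. For $X$ in a class $\calD$, the fibre is $\frakg^X$, and its dimension is constant along $\calD$. So it suffices to show
\[
\dim \calD + \dim \frakg^X \leq n + r \quad \text{for each class } \calD.
\]
A decomposition class with Levi subalgebra $\frakl$ and nilpotent orbit $\calO \subseteq \frakl$ has dimension $\dim \frakz(\frakl) + \dim(G \cdot X)$. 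Also $\dim G\cdot X + \dim \frakg^X = n$. The inequality therefore reduces to $\dim \frakz(\frakl) \leq r$, which holds because $\frakz(\frakl)$ lies in a Cartan subalgebra of $\frakl$.

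Combining the two bounds gives $\dim \calZ_2(\frakg) = n+r$, hence $\GR(T^{(2)}_\frakg) = 2n - (n+r) = n - r$. The step I expect to require the most care is the fibre-dimension count over the finitely many decomposition classes. In particular, I need the claim that $\frakg^X$ has constant dimension along each class, and the dimension formula $\dim \calD = \dim \frakz(\frakl) + \dim G\cdot X$. If either becomes delicate, I will instead cite Richardson's irreducibility theorem directly.
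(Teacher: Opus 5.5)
Your proof is correct and is essentially the paper's argument: the regular semisimple locus gives a vector bundle with $r$-dimensional Cartan fibres, hence dimension $n+r$, and Richardson's irreducibility theorem ensures that $\calZ_2(\frakg)$ has no larger component. Your optional self-contained upper bound via the finitely many decomposition classes, using $\dim \calD = \dim \frakz(\mathfrak{l}) + \dim G\cdot X$ and $\dim \frakz(\mathfrak{l}) \leq r$, is also sound; it has the advantage of proving only $\dim \calZ_2(\frakg) = n+r$, which is all geometric rank needs, rather than relying on the deeper irreducibility statement.
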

\begin{proof}
The geometric rank of $T^{(2)}_\frakg$ is the codimension of the variety 
\[
\calZ_2(\frakg) = \{ (X,Y) \in \frakg \times \frakg : [X,Y] = 0\} \subseteq \frakg \times \frakg.
\]
A consequence of \cite[Thm. A]{Richardson} is that the variety $\calZ_2(\frakg)$ is irreducible. Consider the projection $\pi: \calZ_2(\frakg) \to \frakg$ on the first factor and let $S \subseteq \frakg$ be the open subset of regular semisimple elements. Then $\pi : \pi^{-1}(S) \to S$ is a vector bundle: the fiber over $X \in S$ is the centralizer $C_\frakg(X)$, which is a Cartan subalgebra of $\frakg$ so it has dimension $r$. The irreducibility of $\calZ_2(\frakg)$ guarantees that $\pi^{-1}(S)$ is Zariski dense in $\calZ_2(\frakg)$; so we obtain
\[
\dim \calZ_2(\frakg)  = \dim S + \dim C_\frakg(X) = n+r.
\]
We conclude $\GR(T^{(2)}_\frakg) = 2n - \dim \calZ_2(\frakg) = n-r$.
\end{proof}
In the following, we focus on the case $\frakg = \fraksl_n$; note $\dim \fraksl_n = n^2-1$. The main result of this section is the following.
\begin{theorem}\label{thm: sl lie algebra}
    Let $n\geq 2$ and $k\geq 2$. Then 
    \begin{itemize}
    \item if $k = 2$ then 
    \[
    n \leq \rmQ(T^{(2)}_{\mathfrak{sl}_n}) \leq \uQ(T^{(2)}_{\mathfrak{sl}_n}) \leq n^2-n;
    \]
    \item if $n = 2$ then $\uQ(T^{(k)}_{\mathfrak{sl}_2}) = 2$ for every $k$. 
    \end{itemize}
\end{theorem}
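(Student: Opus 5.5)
The first item follows from \cref{prop: GR semisimple} for the upper bound and an explicit restriction for the lower bound. For $\fraksl_n$ we have $\dim \fraksl_n = n^2-1$ and rank $n-1$, so $\GR(T^{(2)}_{\fraksl_n}) = n^2-n$. By \eqref{eqn: Q bounded by Grank} this gives $\uQ(T^{(2)}_{\fraksl_n}) \leq n^2-n$. The inequality $\rmQ \leq \uQ$ is by definition.

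For the lower bound $\rmQ(T^{(2)}_{\fraksl_n}) \geq n$ with $n \geq 3$, I would exhibit $n$ triples $(a_i,b_i,\gamma_i)$ such that the restriction onto these coordinates is diagonal, i.e. $\gamma_l([a_i,b_j]) = \delta_{ij}\delta_{jl}$ up to nonzero scalars.
\begin{itemize}
\item For $j = 2 \vvirg n$ take $a_j = e_{1j}$ and $b_j = e_{j1}$, so that $[a_j,b_j] = e_{11}-e_{jj}$. For $j \neq l$ one computes $[e_{1j},e_{l1}] = -e_{lj}$, which is off-diagonal.
\item Take the extra triple $a_1 = e_{21}$, $b_1 = e_{1n}$, with $[a_1,b_1] = e_{2n}$.
\item The cross brackets are $[e_{21},e_{l1}] = 0$ and $[e_{1j},e_{1n}] = 0$. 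The remaining ones, $[e_{1j},e_{21}]$ and $[e_{21},e_{1n}]$-type brackets, are either off-diagonal matrix units different from $e_{2n}$ or diagonal; this has to be checked carefully.
\item For the output functionals, let $\gamma_j$ ($j \geq 2$) be the functionals on $\fraksl_n$ that vanish on off-diagonal units and are dual on the Cartan subalgebra to the basis $e_{11}-e_{jj}$. Let $\gamma_1 = e_{2n}^*$.
\end{itemize}
One then verifies that all unwanted terms are killed.

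For $n=2$, take $a = (e_{12}, h)$, $b = (e_{21}, e_{12})$ and output functionals $h^*, e_{12}^*$. The relevant brackets are $[e_{12},e_{21}] = h$, $[h,e_{12}] = 2e_{12}$, $[e_{12},e_{12}] = 0$ and $[h,e_{21}] = -2e_{21}$, and the last one is killed by both functionals. This gives a restriction to $\bfu_3(2)$.

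For $n = 2$ and arbitrary $k$, the upper bound comes from \cref{prop: geoRank propagation}: $\uQ(T^{(k)}_{\fraksl_2}) \leq \GR(T^{(k)}_{\fraksl_2}) \leq \GR(T^{(2)}_{\fraksl_2}) = 4-2 = 2$. Note that \cref{prop: R qnd Q as k grows} is not available here, since $\fraksl_2$ is not associative. This is also why the lower bound cannot be obtained by propagating downward, and why I would construct a degeneration of $T^{(k)}_{\fraksl_2}$ to $\bfu_{k+1}(2)$ directly for each $k$.

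For that construction I would use the weight grading of $\fraksl_2$ under $\mathrm{ad}\,h$, with weights $2,0,-2$ on $e = e_{12}$, $h$, $f = e_{21}$. The brackets respect weights: $[[e,h],h\cdots]$ is a multiple of $e$, $[[f,h],h\cdots]$ is a multiple of $f$, and $[[e,f],e,\dots]$ passes through $h$ and back.

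The plan is to pick two $(k+1)$-tuples of basis vectors, such as $(e;e,h,\dots,h)$ and $(f;f,h,\dots,h)$, perturbed in some slots so that the two terms use distinct vectors in every factor. For instance, one tuple could use $h$ and the other $e+f$ in the slots $2,\dots,k$. I would then apply a one-parameter torus action with integer weights on the basis of each factor. The weights are to be chosen, as in the proofs of \cref{prop: lower bound QTRd} and \cref{thm: subrank triangular}, so that these two terms have $\eps$-exponent $0$ and every other term in the restricted tensor has positive exponent.

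The main obstacle is this last step. The bracket of $\fraksl_2$ has many cross terms: $[h,f] = -2f$ and $[e,f] = h$ feed back into the outputs. This makes a plain restriction likely impossible, and a good choice of weights or a change of basis, for example to $h$ and $e \pm f$, is needed. If a uniform pattern fails, I would instead argue via the dense-orbit idea of \cref{prop: degeneration in flattening image}, using the $\mathrm{SL}_2$-adjoint stabilizer.
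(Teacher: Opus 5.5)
Your upper bounds are correct, including the use of \cref{prop: geoRank propagation} to pass from $k=2$ to all $k$, which the paper leaves implicit. Your restriction of $T^{(2)}_{\fraksl_2}$ to $\bfu_3(2)$ is also correct. The construction for $n \geq 3$, however, fails at exactly the point you flagged. Since $a_1 = e_{21} = b_2$ and $b_1 = e_{1n} = a_n$, antisymmetry gives $[a_n,b_2] = [e_{1n},e_{21}] = -e_{2n}$. So $\gamma_1([a_n,b_2]) = -1 \neq 0$, and the restricted tensor is not diagonal. This is repairable within your approach: take instead $a_1 = e_{21}$, $b_1 = e_{32}$, $\gamma_1 = e_{31}^*$. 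Then for $i,j \geq 2$:
\begin{itemize}
\item $[a_1,b_1] = -e_{31}$;
\item $[a_1,b_j] = 0$ and $[a_i,b_1] = \delta_{i3}e_{12}$;
\item $[a_i,b_j] = -e_{ji}$ for $i \neq j$, and this is never $e_{31}$ because $i \geq 2$.
\end{itemize}
So every unwanted term is killed. With this fix you get a single direct construction for all $n \geq 3$. The paper instead treats $\fraksl_2$ and $\fraksl_3$ by hand and restricts to the block-diagonal subalgebra $\fraksl_2^{\oplus p}\oplus\fraksl_3^{\oplus q} \subseteq \fraksl_n$ with $n = 2p+3q$.

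The real gap is the lower bound $\uQ(T^{(k)}_{\fraksl_2}) \geq 2$ for $k \geq 3$, which is the substance of the second item. You describe a strategy and name the obstacle, but you give no degeneration.
\begin{itemize}
\item Your sample pattern does not close up. For instance $[f,e+f] = -h$ and $[-h,e+f] = -2(e-f)$, so the iterated bracket alternates between $\langle h\rangle$ and $\langle e-f\rangle$ and never returns to $\langle f\rangle$.
\item The fallback via \cref{prop: degeneration in flattening image} cannot help. That result only transports a degeneration you already have down to lower order.
\end{itemize}
The missing idea is to work in the basis $h$, $a = e_{12}+e_{21}$, $b = e_{12}-e_{21}$, where $[a,h] = -2b$, $[b,h] = -2a$, $[b,a] = 2h$. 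Right bracketing by $h$ swaps $a$ and $b$, and right bracketing by $a$ swaps $b$ and $h$. Hence the following tuples carry nonzero coefficients and use distinct basis vectors in every factor:
\begin{itemize}
\item for $k$ odd, $(a;a,h,\dots,h)$ and $(b;b,a,\dots,a)$;
\item for $k$ even, $(b;a,h,\dots,h)$ and $(h;b,a,\dots,a)$.
\end{itemize}
The paper then proceeds in three steps.
\begin{enumerate}
\item It restricts the first input slot to $\langle a,b\rangle$ and the remaining slots to $\langle h,a\rangle$.
\item It shows by induction on $k$ that the restricted tensor is a sum of three terms whose middle factors are $\sum_{\bfj} Y_\bfj$, with $Y_0 = h^*\otimes h^*$ and $Y_1 = a^*\otimes a^*$.
\item It applies a torus degeneration: scale $a^*$ by $\eps^{(-1)^p p}$ in slot $p \geq 2$, scale $b^*$ by a suitable power of $\eps$ in slot $1$, and kill one basis vector in slot $0$. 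Only the two desired terms survive in the limit.
\end{enumerate}
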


The upper bounds of \cref{thm: sl lie algebra} follow from \cref{prop: GR semisimple} since $\fraksl_n$ has dimension $n^2-1$ and rank $n-1$. \cref{prop: Q T2sln} and \cref{prop: uQ Tksl2} prove the lower bound in the first part and the second part respectively. In the following, let $e_{i,j}$  be the matrix with $1$ on the $(i,j)$-entry and zero elsewhere, and let $h_i=e_{i,i}-e_{i+1,i+1}$. Then $e_{i,j}$ for $i \neq j$ and $h_1,\dots,h_{n-1}$ define a basis of $\fraksl_n$. Let $h_1^*,\dots,h_{n-1}^* , e^*_{i,j}$ with $i\neq j$ be the corresponding dual basis of $\fraksl_n^*$.

\begin{proposition}\label{prop: Q T2sln}
For every $n$, 
\[
n \leq \rmQ(T^{(2)}_{\mathfrak{sl}_n}).
\]
\end{proposition}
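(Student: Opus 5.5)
We will prove $\rmQ(T^{(2)}_{\fraksl_n}) \geq n$ with an explicit restriction to $\bfu_3(n)$: $n-1$ diagonal terms come from the Cartan action on simple root vectors, and one further term comes from a single bracket $[e_{1,n},e_{n,1}]$ landing in the Cartan subalgebra. Write $\frakb \subseteq \fraksl_n$ for the diagonal (Cartan) subalgebra and $\alpha_i = \eps_i - \eps_{i+1} \in \frakb^*$ for the simple roots, so that $[H, e_{i,i+1}] = \alpha_i(H)\, e_{i,i+1}$. We regard $T^{(2)}_{\fraksl_n}$ as the bilinear map $(X,Y)\mapsto [X,Y]$. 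We choose coordinate projections on the three factors.
\begin{itemize}
\item First factor: keep the dual coordinates of $\frakb$ together with $e_{1,n}^*$, and kill every other $e_{i,j}^*$.
\item Second factor: keep $e_{i,i+1}^*$ for $i=1 \vvirg n-1$ and $e_{n,1}^*$, and kill the rest.
\item Output factor: keep $e_{i,i+1}$ for $i = 1 \vvirg n-1$ together with one linear functional $\psi$ on $\frakb$ satisfying $\psi(e_{1,1}-e_{n,n}) \neq 0$; all other root vectors are sent to zero.
\end{itemize}

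The first step is to list all nonzero brackets surviving these projections. When $X \in \frakb$ and $Y = e_{i,i+1}$, we get $\alpha_i(X)\, e_{i,i+1}$, which contributes $\sum_{i=1}^{n-1} e_{i,i+1} \otimes \alpha_i \otimes e_{i,i+1}^*$. When $X \in \frakb$ and $Y = e_{n,1}$, the bracket is a multiple of $e_{n,1}$, which the output projection kills. When $X = e_{1,n}$ and $Y = e_{i,i+1}$ with $i \leq n-1$, the bracket is $e_{1,n}e_{i,i+1} - e_{i,i+1}e_{1,n} = 0$, since $n \neq i$ and $i+1 \neq 1$. Finally, $X = e_{1,n}$ and $Y = e_{n,1}$ gives $e_{1,1}-e_{n,n} \in \frakb$, which survives through $\psi$. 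Hence the restricted tensor is
\[
\sum_{i=1}^{n-1} e_{i,i+1} \otimes \alpha_i \otimes e_{i,i+1}^* \;+\; \psi(e_{1,1}-e_{n,n})\,(\text{output }\psi)\otimes e_{1,n}^* \otimes e_{n,1}^*.
\]

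The simple roots $\alpha_1 \vvirg \alpha_{n-1}$ form a basis of $\frakb^*$, and $e_{1,n}^*$ is an independent further coordinate in the first factor. The first-factor map can therefore send $\alpha_i \mapsto f_i$ and $e_{1,n}^*\mapsto f_n$. On the second factor we send $e_{i,i+1}^* \mapsto f_i$ and $e_{n,1}^*\mapsto f_n$, and on the output $e_{i,i+1}\mapsto f_i$ and $\psi \mapsto f_n$, after rescaling by $\psi(e_{1,1}-e_{n,n})^{-1}$. The result is $\sum_{i=1}^n f_i^{\otimes 3} = \bfu_3(n)$, giving $\rmQ(T^{(2)}_{\fraksl_n}) \geq n$.

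The main obstacle is the first step: ensuring that no cross terms survive. This relies on keeping the Cartan coordinates and the extra root coordinate in separate slots, and on checking the degenerate case $n=2$ separately. For $n=2$ we have $e_{1,n} = e_{1,2}$, which also appears among the kept second-factor coordinates; however $[e_{1,2}, e_{1,2}]=0$ and $[\frakb, e_{2,1}]$ is sent to zero by the output projection, so the construction still works.
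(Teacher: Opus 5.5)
Your proof is correct, and it takes a genuinely different route from the paper. In bilinear-map terms, you pair the fundamental coweights $X_1,\dots,X_{n-1}$ (with $\alpha_j(X_i)=\delta_{ij}$) and $X_n=e_{1,n}$ against $Y_i=e_{i,i+1}$ and $Y_n=e_{n,1}$, and you read off the output through $e_{i,i+1}^*$ and $\psi$. Your case analysis covers all the brackets $[X_i,Y_j]$, so this is an honest restriction to $\bfu_3(n)$. The computation $[e_{1,n},e_{i,i+1}]=0$ already includes $n=2$, so your separate check there is not needed.

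The paper proceeds differently. It proves only the base cases $n=2,3$ by explicit restrictions. For $n\geq 4$ it writes $n=2p+3q$ and restricts $T^{(2)}_{\fraksl_n}$ to the structure tensor of the block-diagonal subalgebra $\fraksl_2^{\oplus p}\oplus\fraksl_3^{\oplus q}$. It then uses superadditivity of subrank under direct sums. That route is modular and reduces everything to two small hand-checkable tensors. The cost is a case split and a reduction step that relies on block-diagonal embeddings specific to $\fraksl_n$.

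Your argument is a single uniform restriction with purely root-theoretic ingredients. The first $n-1$ terms come from coweights against simple root vectors. The extra term comes from the highest root $\theta=\eps_1-\eps_n$, using only three facts: $\theta+\alpha_i$ is never a root, $-\theta$ is not a simple root, and $[e_\theta,e_{-\theta}]$ is a nonzero element of the Cartan subalgebra. Consequently the same construction gives $\rmQ(T^{(2)}_\frakg)\geq \rank(\frakg)+1$ for every simple Lie algebra $\frakg$, and for $\frakg=\fraksl_n$ this is exactly the bound $n$.
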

\begin{proof}
We prove two base cases $\rmQ(T^{(2)}_{\mathfrak{sl}_2})\geq2$ and $\rmQ(T^{(2)}_{\mathfrak{sl}_3})\geq3$. For the case $n=2$, we have
\begin{align*}
T^{(2)}_{\mathfrak{sl}_2}&=2e_{1,2}\otimes h_1^*\otimes e_{1,2}^*+2e_{2,1}\otimes e_{2,1}^*\otimes h_1^*+h_1\otimes e_{1,2}^*\otimes e_{2,1}^* \\
            &-2e_{1,2}\otimes e_{1,2}^*\otimes h_1^* -2e_{2,1}\otimes h_1^*\otimes e_{2,1}^*-h_1\otimes e_{2,1}^*\otimes e_{1,2}^*.
\end{align*}
Consider the restriction given by the following maps:
    \[
    \begin{array}{rlll}
    X_0 : &\mathfrak{sl}_2 & \to &\bbC^2 \\
        &h_1 & \mapsto & 0 \\
        &e_{1,2} & \mapsto & e_0/2 \\ 
        &e_{2,1} &\mapsto & e_1/2 
    \end{array},
    \qquad
        \begin{array}{rlll}
    X_1 : &\mathfrak{sl}_2^* & \to &\bbC^2 \\
        &h_1^* & \mapsto & e_0 \\
        &e_{1,2}^* & \mapsto & 0 \\ 
        &e_{2,1}^* &\mapsto & e_1 
    \end{array},
    \qquad
    \begin{array}{rlll}
    X_2 : &\mathfrak{sl}_2^* & \to &\bbC^2 \\
        &h_1^* & \mapsto & e_1 \\
        &e_{1,2}^* & \mapsto & e_0 \\ 
        &e_{2,1}^* &\mapsto & 0
    \end{array}.
    \]
    One can verify that $(X_0 \otimes X_1 \otimes X_2)(T^{(2)}_{\mathfrak{sl}_2}) = \bfu_3(2)$.

    For the case $n=3$ consider the restriction given by the following maps:
    \[
    \begin{array}{rlll}
    X_0 : &\mathfrak{sl}_3 & \to &\bbC^3 \\
        &h_2 & \mapsto & e_2 \\
        &e_{1,2} & \mapsto & e_0/2 \\ 
        &e_{2,1} & \mapsto & e_1/2 \\
    \end{array},
    \qquad
        \begin{array}{rlll}
    X_1 : &\mathfrak{sl}_3^* & \to &\bbC^3 \\
        &h_1^* & \mapsto & e_0 \\
        &e_{2,3}^* & \mapsto & e_2 \\
        &e_{2,1}^* & \mapsto & e_1 \\
    \end{array},
    \qquad
    \begin{array}{rlll}
    X_2 : &\mathfrak{sl}_3^* & \to &\bbC^3 \\
        &h_1^* & \mapsto & e_1 \\
        &e_{1,2}^* & \mapsto & e_0 \\
        &e_{3,2}^* & \mapsto & e_2
    \end{array}
    \]
    and all the unspecified vectors are mapped to $0$. One can immediately verify that $(X_0 \otimes X_1 \otimes X_2)(T^{(2)}_{\mathfrak{sl}_3}) = \bfu_3(3)$.
  
    If $n \geq 4$, write $n = 2p + 3q$ for some $p,q \geq 0$. Then $T^{(2)}_{\mathfrak{sl}_n}$ restricts to ${T^{(2)}_{\mathfrak{sl}_2}}^{\oplus p} \oplus {T^{(2)}_{\mathfrak{sl}_3}}^{\oplus q}$: in coordinates $\fraksl_2^{\oplus p} \oplus \fraksl_3^{\oplus q}$ embeds in $\fraksl_n$ as block diagonal matrices, with diagonal blocks of size $2$ or $3$. By the first part of the argument, the subrank of ${T^{(2)}_{\mathfrak{sl}_2}}^{\oplus p} \oplus {T^{(2)}_{\mathfrak{sl}_3}}^{\oplus q}$ is bounded from below by $2p + 3q = n$.
\end{proof}

\begin{proposition}\label{prop: uQ Tksl2}
    For $n=2$, $\rmQ(T^{(2)}_{\mathfrak{sl}_2})=2$ and $\uQ(T^{(k)}_{\mathfrak{sl}_2})=2$ for all $k$.
\end{proposition}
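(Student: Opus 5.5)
The plan is to match the upper bound from geometric rank with a lower bound from an explicit construction.

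\emph{Upper bound.} Since $\fraksl_2$ is semisimple of dimension $3$ and rank $1$, \cref{prop: GR semisimple} gives $\GR(T^{(2)}_{\fraksl_2}) = 2$. \cref{prop: geoRank propagation} makes no unitality or associativity assumption, and applies to the left-to-right iterated bracket. It gives $\GR(T^{(k)}_{\fraksl_2}) \leq \GR(T^{(2)}_{\fraksl_2}) = 2$ for every $k \geq 2$. By \eqref{eqn: Q bounded by Grank} this yields $\rmQ(T^{(k)}_{\fraksl_2}) \leq \uQ(T^{(k)}_{\fraksl_2}) \leq 2$. For $k=2$, the explicit restriction in the proof of \cref{prop: Q T2sln} gives $\rmQ(T^{(2)}_{\fraksl_2}) \geq 2$. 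This settles the first claim and the case $k=2$ of the second.

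\emph{Lower bound for $k \geq 3$.} \cref{thm: degeneration between higher order algebras} is not available, since $\fraksl_2$ is neither associative nor unital. I will therefore build a degeneration of $T^{(k)}_{\fraksl_2}$ to $\bfu_{k+1}(2)$ by hand. It suffices to find two nonzero monomials $c\, z \otimes x_1^* \ootimes x_k^*$ and $c'\, z' \otimes y_1^* \ootimes y_k^*$ of $T^{(k)}_{\fraksl_2}$ in the basis $e_{1,2}, e_{2,1}, h_1$ with $x_i \neq y_i$ for every $i$ and $z \neq z'$, and then kill all other terms. To find these monomials, track the iterated bracket $[\cdots[X_1,X_2],\dots,X_k]$ with basis inputs; up to nonzero scalars, the current value moves along the transitions $e \xrightarrow{f} h$, $h \xrightarrow{e} e$, $h \xrightarrow{f} f$, $f \xrightarrow{e} h$, $e \xrightarrow{h} e$, $f \xrightarrow{h} f$.
\begin{itemize}
\item Sequence A is $(e,f,e,f,\dots)$, with values $h,e,h,e,\dots$.
\item Sequence B is $(f,e,f,e,\dots)$, with values $h,f,h,f,\dots$.
\end{itemize}
For $k$ odd the outputs are $e$ and $f$, and the inputs differ in every slot. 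For $k$ even I keep the alternation for the first $k-1$ inputs, reaching $e$ and $f$ respectively, and then choose $X_k = f$ for A (output $h$) and $X_k = h$ for B (output $f$). In both cases the two monomials have pairwise distinct letters in all $k+1$ factors.

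\emph{Main obstacle: eliminating the other terms.} Restricting each factor to the two letters used there still leaves cross terms. For example, from the value $h$ both $e$ and $f$ give nonzero brackets, so mixed words survive. I would first try a torus degeneration in the spirit of \cref{prop: lower bound QTRd} and \cref{thm: subrank triangular}: put weights $\eps^{w_i(x)}$ on the two retained letters in each factor, chosen so that the exponent vanishes on the two chosen monomials and is strictly positive on every other surviving monomial. The surviving words are sequences in a small automaton (value in $\{e,h,f\}$, two letters per slot), so these weights can be set by a dynamic-programming, path-penalty argument. One natural choice is to give each deviation from the alternating pattern a positive penalty and compensate on the output factor using the $\mathrm{ad}(h_1)$-weight grading. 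If this becomes messy, the fallback is an inductive or recursive check that the restricted tensor, read as a sum over words, has exactly these two words with nonzero coefficient up to terms that a single extra diagonal rescaling sends to zero. Verifying the positivity of the exponents for all other words is where I expect the real work.
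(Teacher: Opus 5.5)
Your upper bound (via \cref{prop: GR semisimple} and \cref{prop: geoRank propagation}) and the case $k=2$ are correct. For $k\ge 3$, however, the lower bound is the real content of the proposition, and you leave it open. Worse, the route you outline cannot succeed with the two words you chose. Since $A=(e,f,e,f,\dots)$ and $B=(f,e,f,e,\dots)$ both begin in $\{e,f\}$, the bracket after two inputs is $\pm h$ along either word. Hence, after restricting each factor to your two letters, the tensor factors as $(e^*\otimes f^*-f^*\otimes e^*)\otimes R$, with the first factor in slots $1,2$ and $R$ in the remaining slots. For $k=3$ one gets exactly $2\,(e^*\otimes f^*-f^*\otimes e^*)_{12}\otimes(e\otimes e^*-f\otimes f^*)_{03}$. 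The flattening separating slots $\{1,2\}$ from the rest therefore has rank $1$, while for $\bfu_{k+1}(2)$ it has rank $2$. Flattening ranks cannot increase under degeneration, so no weights, and in fact no degeneration at all, take this restriction to $\bfu_{k+1}(2)$. Keeping the third letter does not help either. Let $C$ take inputs $1,2$ from $A$ and all other inputs and the output from $B$, and let $D$ be defined symmetrically. Both lie in the support of $T^{(k)}_{\fraksl_2}$, and coordinatewise $\{A,B\}=\{C,D\}$. So $w(A)+w(B)=w(C)+w(D)$ for every additive weight $w$, and no torus degeneration in the basis $e,f,h$ can keep $A,B$ while killing $C,D$.

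The missing idea is that the two surviving words must induce \emph{offset} pairings of consecutive slots, so that a single weight vector selects the desired pattern in each branch separately. The paper works in the basis $h$, $a=e_{1,2}+e_{2,1}$, $b=e_{1,2}-e_{2,1}$, where $[a,h]=-2b$, $[b,a]=2h$, $[b,h]=-2a$. It keeps $\{a^*,b^*\}$ in slot $1$ and $\{h^*,a^*\}$ in slots $p\ge 2$. The restriction then splits according to $X_1\in\{a,b\}$ into sums over independent blocks $h^*\otimes h^*$ or $a^*\otimes a^*$. These blocks sit on slots $\{3,4\},\{5,6\},\dots$ when $X_1=a$, and on $\{2,3\},\{4,5\},\dots$ when $X_1=b$. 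The weight $\eps^{(-1)^p p}$ on $a^*$ in slot $p$ makes each $aa$-block cost $+1$ in the first branch and $-1$ in the second. After rescaling $b^*$ in slot $1$ and killing one output letter, only the all-$hh$ and all-$aa$ words survive.

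Your set-up can be repaired in the same spirit without changing basis. Keep $A$ but replace $B$ by $(h,e,f,e,f,\dots)$, and retain $\{e^*,h^*\}$ in slot $1$, $\{e^*,f^*\}$ in slots $p\ge2$, and $\{e,h\}$ in the output. The free blocks, each $(e,f)$ or $(f,e)$, then sit on $\{3,4\},\{5,6\},\dots$ for $X_1=e$ (after the forced $X_2=f$), and on $\{2,3\},\{4,5\},\dots$ for $X_1=h$. An unpaired last input is forced to be $e$ by the output restriction. The weight $\eps^{-p}$ on $f^*$ in slot $p$, together with a balancing weight in slot $1$, then isolates the two words.
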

\begin{proof}
The case $k = 2$ follows from \cref{prop: Q T2sln}. Let $k\geq3$. Write $h=h_1$, $a=e_{1,2}+e_{2,1}$, and $b=e_{1,2}-e_{2,1}$. Then $h,a,b$ is a basis of $\fraksl_2$; let $h^*,a^*,b^*$ be the dual basis of $\fraksl_2^*$. The result of the bracket on these basis elements is 
\begin{equation}\label{eqn: sl2 bracket hab}
[a,h]=-2b, \qquad [b,a]=2h, \qquad [b,h]=-2a.
\end{equation}
We are going to prove that $T^{(k)}_{\mathfrak{sl}_2}$ degenerates to 
\begin{align*}
a \otimes a^*\otimes (h^*)^{\otimes (k-1)}+b\otimes b^*\otimes(a^*)^{\otimes(k-1)},&\text{ if }k\text{ is odd, and } \\
-b\otimes a^*\otimes (h^*)^{\otimes (k-1)}+h \otimes b^*\otimes(a^*)^{\otimes(k-1)},&\text{ if }k\text{ is even;}
\end{align*}
these are both isomorphic to $\bfu_{k+1}(2)$.

Consider the restriction given by the following maps:
    \[
    \begin{array}{rlll}
    X_0 : &\mathfrak{sl}_2 & \to &\mathfrak{sl}_2 \\
        &h & \mapsto & \frac{1}{2^{k-1}}h \\
        &a & \mapsto & \frac{1}{2^{k-1}}a \\ 
        &b & \mapsto & \frac{1}{2^{k-1}}b 
    \end{array},
    \qquad
        \begin{array}{rlll}
    X_1 : &\mathfrak{sl}_2^* & \to &\mathfrak{sl}_2^* \\
        &h^* & \mapsto & 0 \\
        &a^* & \mapsto & a^* \\ 
        &b^* &\mapsto & b^*
    \end{array},
   \qquad
    \begin{array}{rlll}
    X_{\ell} : &\mathfrak{sl}_2^* & \to &\mathfrak{sl}_2^* \\
        &h^* & \mapsto & h^* \\
        &a^* & \mapsto & a^* \\ 
        &b^* &\mapsto & 0
    \end{array}
    \]
for $\ell=2,\dots,k$.
Let $Y_0 = h^* \otimes h^*$ and $Y_1=a^*\otimes a^*$ be elements of $\fraksl_2^* \otimes \fraksl_2^*$. If $\bfj \in \{0,1\}^q$ is a binary string, write $Y_\bfj = Y_{j_1} \otimes \cdots \otimes Y_{j_q}$, which is an element of $(\fraksl_2^*)^{\otimes 2q}$.

Let $q = \lfloor (k-1)/2 \rfloor$. Then we use induction on $k$ to prove
\begin{align*}
    (X_0\otimes X_1\otimes\cdots X_k)(T^{(k)}_{\mathfrak{sl}_2}) &= a\otimes a^*\otimes h^*\otimes\bigl(\textstyle \sum_{\bfj \in \{0,1\}^{q-1}} Y_\bfj \bigr)\otimes h^* \\
    &-h\otimes a^*\otimes h^*\otimes\bigl(\textstyle \sum_{\bfj \in \{0,1\}^{q-1}} Y_\bfj \bigr)\otimes a^* \\
    &+b\otimes b^*\otimes\bigl(\textstyle \sum_{\bfj \in \{0,1\}^{q}} Y_\bfj \bigr) \quad \text{ if $k$ is odd,} \\
   \text{and } (X_0\otimes X_1\otimes\cdots X_k)(T^{(k)}_{\mathfrak{sl}_2}) &= -b\otimes a^*\otimes h^*\otimes\bigl(\textstyle \sum_{\bfj \in \{0,1\}^{q}} Y_\bfj \bigr) \\
    &+h\otimes b^*\otimes\bigl(\textstyle \sum_{\bfj \in \{0,1\}^{q}} Y_\bfj \bigr)\otimes a^* \\
    &-a\otimes b^*\otimes\bigl(\textstyle \sum_{\bfj \in \{0,1\}^{q}} Y_\bfj \bigr)\otimes h^* \quad \text{ if $k$ is even}.
\end{align*}
To prove the base of the induction for $k=2$, observe that by \eqref{eqn: sl2 bracket hab}, we have 
\begin{align*}
    T^{(2)}_{\mathfrak{sl}_2}=&-2b\otimes a^*\otimes h^* + 2h\otimes b^*\otimes a^* - 2a\otimes b^*\otimes h^* \\
    &+2b\otimes h^*\otimes a^* - 2h\otimes a^*\otimes b^* + 2a\otimes h^*\otimes b^*,
\end{align*}
so that 
\[
(X_0\otimes X_1\otimes X_2)(T^{(2)}_{\mathfrak{sl}_2}) =  -b\otimes a^*\otimes h^* + h\otimes b^*\otimes a^* - a\otimes b^*\otimes h^*.
\]
Now let $k\geq 3$. If $k$ is odd, then $q=\lfloor (k-1)/2 \rfloor = (k-1)/2$; the induction hypothesis applies to $k-1$, which is even, so that $\lfloor (k-2)/2 \rfloor = q-1$. We have 
\begin{align*}
    (X_0\otimes X_1\ootimes X_{k-1})(T^{(k-1)}_{\mathfrak{sl}_2}) = &-b\otimes a^*\otimes h^*\otimes\bigl(\textstyle \sum_{\bfj \in \{0,1\}^{q-1}} Y_\bfj \bigr) \\
    &+h\otimes b^*\otimes\bigl(\textstyle \sum_{\bfj \in \{0,1\}^{q-1}} Y_\bfj \bigr)\otimes a^* \\
    &-a\otimes b^*\otimes\bigl(\textstyle \sum_{\bfj \in \{0,1\}^{q-1}} Y_\bfj \bigr)\otimes h^* .
\end{align*}
Then, 
\begin{align*}
    (X_0\otimes X_1\ootimes X_k)(T^{(k)}_{\mathfrak{sl}_2})&= \textfrac{1}{2}[-b,h]\otimes a^*\otimes h^*\otimes\bigl(\textstyle \sum_{\bfj \in \{0,1\}^{q-1}} Y_\bfj \bigr)\otimes h^*  \\
    &+ \textfrac{1}{2}[-b,a]\otimes a^*\otimes h^*\otimes\bigl(\textstyle \sum_{\bfj \in \{0,1\}^{q-1}} Y_\bfj \bigr)\otimes a^* \\
    &+\textfrac{1}{2} [h,a]\otimes b^*\otimes\bigl(\textstyle \sum_{\bfj \in \{0,1\}^{q-1}} Y_\bfj \bigr)\otimes a^* \otimes a^*  \\
    &+ \textfrac{1}{2}[-a,h]\otimes b^*\otimes\bigl(\textstyle \sum_{\bfj \in \{0,1\}^{q-1}} Y_\bfj \bigr)\otimes h^*\otimes h^*  \\
    &= a\otimes a^*\otimes h^*\otimes\bigl(\textstyle \sum_{\bfj \in \{0,1\}^{q-1}} Y_\bfj \bigr)\otimes h^*  \\
    &- h\otimes a^*\otimes h^*\otimes\bigl(\textstyle \sum_{\bfj \in \{0,1\}^{q-1}} Y_\bfj \bigr)\otimes a^*  \\
    &+ b\otimes b^*\otimes\bigl(\textstyle \sum_{\bfj \in \{0,1\}^{q}} Y_\bfj \bigr). 
\end{align*}
If $k$ is even, then $q = \lfloor(k-1)/2\rfloor = (k-2)/2$, and the induction hypothesis applies to $k-1$, which is odd, and $\lfloor (k-2)/2 \rfloor = q$. The proof is obtained with a calculation similar to the one above.

It remains to show that the restrictions $(X_0 \ootimes X_k)(T^{(k)}_{\mathfrak{sl}_2})$ degenerate to the desired unit tensors. Consider the degeneration defined by the curves
\[
\begin{array}{rlrlcrlrlc}
    g_0: & h & \mapsto & 0 & \text{ if $k$ is odd; } & g_0: & h & \mapsto & h & \text{ if $k$ is even;}\\
     & a & \mapsto & a & \qquad & & a & \mapsto & 0\\
     & b & \mapsto & b & \qquad & & b & \mapsto & b \\
     ~\\
    g_1(\eps): & a^* & \mapsto & a^* & \text{ if $k$ is odd; } & g_1(\eps): & a^* & \mapsto & a^* & \text{ if $k$ is even;}\\
     & b^* & \mapsto & \eps^{q} b^* & \qquad & & b^* & \mapsto & \eps^{-q-2} b^* \\
     ~\\
\end{array}
\]
and, independently of the parity of $k$, for $p = 2 \vvirg k$,
\[
\begin{array}{rlrlc}
 g_p(\eps): & h^* & \mapsto & h^*  \\
 & a^* & \mapsto & \eps^{(-1)^p p}a^*
\end{array}.
\]
Then, if $k$ is odd, so that $q=(k-1)/2$, we obtain
\begin{align*}
(g_0 \otimes g_1(\eps) \ootimes g_k(\eps))& ( X_0 \ootimes X_k)(T^{(k)}_{\mathfrak{sl}_2}) = \\
& a \otimes a^* \otimes h^* \otimes \bigl(\textstyle \sum_{\bfj \in \{0,1\}^{q-1}} \eps^{j_1+\cdots+j_{q-1}} Y_\bfj \bigr) \otimes h^* + \\
\eps^{q} \cdot &b\otimes b^* \otimes \bigl(\textstyle \sum_{\bfj \in \{0,1\}^{q}}  \eps^{-j_1-\cdots-j_q} Y_\bfj \bigr).
\end{align*}
Passing to the limit, we obtain the desired unit tensor, since the terms not vanishing in the limit are the one with $\bfj = (0,\dots,0)$ in the first summation and the one with $\bfj = (1,\dots,1)$ in the second summation.

Similarly, if $k$ is even, so that $q=(k-2)/2$, we obtain
\begin{align*}
(g_0 \otimes g_1(\eps) \ootimes g_k(\eps))& ( X_0 \ootimes X_k)(T^{(k)}_{\mathfrak{sl}_2}) = \\  -&b \otimes a^* \otimes h^* \otimes \bigl(\textstyle \sum_{\bfj \in \{0,1\}^{q}} \eps^{j_1+\cdots+j_{q}} Y_\bfj \bigr)+ \\
(\eps^{-q-2} \cdot &h)\otimes b^* \otimes \bigl(\textstyle \sum_{\bfj \in \{0,1\}^{q}}  \eps^{-j_1-\cdots-j_q} Y_\bfj \bigr) \otimes (\eps^k \cdot a^*).
\end{align*}
As before, passing to the limit, we obtain the desired unit tensor.
\end{proof}

\appendix

\section{Invariant theory of small spaces} \label{sec: invariants}
In this section, we record some known results regarding the invariant theory of $\bbC^3 \otimes \bbC^3 \otimes \bbC^3$ and $\bbC^2 \otimes \bbC^2 \otimes \bbC^2 \otimes \bbC^2$ for the action of $\GL_3^{\times 3}$ and $\GL_2^{\times 4}$ respectively. We refer to \cite[Sec.~3.8.3]{WallachGIT} for the theory.

Orbits for these actions in these spaces have a classification, depending on a small number of parameters. The existence of normal forms follows from the classical theory \cite{Vinberg}, and the explicit normal forms are given in \cite{Nur00a,Nur00b,DitDeGrMar23} for the case of $(\bbC^3)^{\otimes 3}$ and in \cite{VerDehMooVer,CD07,HLTatlas} for the case of $(\bbC^2)^{\otimes 4}$. In the following, we will not need the classifications, but only some results on the rings of invariant polynomials.

\subsection{\texorpdfstring{The case $\bbC^3 \otimes \bbC^3 \otimes \bbC^3$}{}}\label{sec: invariants 333}
Let 
    \[
   S[(\bbC^3)^{\otimes 3}] = \{ F \in \bbC[\bbC^3 \otimes \bbC^3 \otimes \bbC^3] : F = F \circ (g_1 \otimes g_2 \otimes g_3) \text{ for all } g_i \in \SL_3\}
    \]
be the ring of $(\SL_3 \times\SL_3 \times\SL_3)$-invariants on $\bbC^3 \otimes \bbC^3 \otimes \bbC^3$. It is known that $S[(\bbC^3)^{\otimes 3}]$ is freely generated by three elements:
\[
S[(\bbC^3)^{\otimes 3}] = \bbC[F_6,F_9,F_{12}]
\]
where $F_6,F_9,F_{12}$ are three polynomials of degree $6,9,12$ respectively. We provide a direct description, which uniquely determines them. This description depends on a cubic polynomial that can be naturally associated to a tensor $T \in \bbC^3 \otimes \bbC^3 \otimes \bbC^3$, see  \cite{Ng95}: considering the image of the first flattening $T_1 : {\bbC^3}^* \to \bbC^3 \otimes \bbC^3$, the tensor $T$ can be regarded as a $3 \times 3$ matrix of linear forms in $3$ variables; the determinant of such matrix is a cubic polynomial $\phi_T \in S^3 \bbC^3$. The three invariants can be described as follows: 
\begin{itemize}
\item $F_6$ is uniquely determined up to scaling and it is known that $F_6(\bfu_3(3)) \neq 0$; we normalize it so that $F_6(\bfu_3(3)) = 1$.
\item $F_9$ is the equation of the hypersurface of tensors $T$ such that $\uR(T) \leq 4$. It can be realized as the determinant of \emph{Strassen's flattening}, that is the matrix given in \cite[eq. 8]{CGJ}.
\item $F_{12}$ is only defined up to an additive shift by $F_6^2$. It can be chosen to be the equation of the hypersurface of tensors $T$ with the property that $\phi_T$ is a cubic of border (Waring) rank at most $3$.  Since $F_{12}(\bfu_3(3)) \neq 0$, we normalize $F_{12}$ so that $F_{12}(\bfu_3(3)) = 1$.
\end{itemize}

\subsection{\texorpdfstring{The case $\bbC^2 \otimes \bbC^2 \otimes \bbC^2 \otimes \bbC^2$}{}}\label{sec: invariants 2222}
 Let
    \[
   S[(\bbC^2)^{\otimes 4}] = \{ F \in \bbC[\bbC^2 \otimes \bbC^2 \otimes \bbC^2\otimes \bbC^2] : F = F \circ (g_1 \otimes g_2 \otimes g_3 \otimes g_4) \text{ for all } g_i \in \SL_2\}
    \]
be the ring of $(\SL_2 \times\SL_2 \times\SL_2\times\SL_2)$-invariants on $\bbC^2 \otimes \bbC^2 \otimes \bbC^2\otimes \bbC^2$. It is known that $S[(\bbC^2)^{\otimes 4}]$ is freely generated by four elements:
\[
S[(\bbC^2)^{\otimes 4}] = \bbC[F_2,F_4,F'_4, F_{6}]
\]
where $F_2,F_4,F_4',F_6$ are four polynomials of degree $2,4,4,6$ respectively. As in the previous section, we provide a direct description which makes them uniquely determined. For convenience, write $V_1,V_2,V_3,V_4$ for the four copies of $\bbC^2$.
\begin{itemize}
\item $F_2$ is uniquely determined up to scaling and it is known that $F_2(\bfu_4(2)) \neq 0$; we normalize it so that $F_2(\bfu_4(2)) = 1$.\item $F_4$ and $F_4'$ are only defined up to an additive shift by $F_2^2$. They can be chosen to be the determinants of two $4\times 4$ flattenings defined by $T \in V_1 \ootimes V_4$; for instance 
\[
F_4(T) = \det (T_{\{1,2\}} : V_1^* \otimes V_2^* \to V_3 \otimes V_4), \qquad 
F_4'(T) = \det(T_{\{1,3\}} : V_1^* \otimes V_3^* \to V_2 \otimes V_4).
\]
The determinant of the third map $T_{\{1,4\}}$ is a linear combination of $F_4$ and $F_4'$ \cite{Segre}.
\item $F_6$ is only defined up to an additive shift by $F_2^3,F_2F_4,F_2F_4'$. An explicit $F_6$ was constructed in \cite{BerDLaGes} as follows: given $T \in V_1 \ootimes V_4$, regard the image of the bilinear map $T: V_1^* \times V_2^* \to V_3 \otimes V_4$ as a $2 \times 2$ matrix of bilinear forms. The determinant of such matrix is a biquadratic form, which can be regarded as a bilinear form on $S^2 V_1^* \times S^2 V_2^*$, represented by a $3 \times 3$ matrix. The value of $F_6(T)$ is the determinant of such matrix. Different choices and orderings of the spaces yield degree $6$ polynomials lying in the linear span of $F_2F_4,F_2F_4'$ and $F_6$.
\end{itemize}

{\small
\bibliographystyle{alphaurl}
\bibliography{subrank}
}

\end{document}